\newtheorem{thm}{Theorem}[section]
\newtheorem{lem}[thm]{Lemma}
\newtheorem{cor}[thm]{Corollary}
\newtheorem{rmk}[thm]{Remark}
\begin{document}
\title{Knot Floer homology of some even 3-stranded pretzel knots}
\author{Konstantinos Varvarezos}
\maketitle

\begin{abstract}
We apply the theory of ``peculiar modules" for the Floer homology of 4-ended tangles developed by Zibrowius \cite{Zib} (specifically, the immersed curve interpretation of the tangle invariants) to compute the Knot Floer Homology ($\widehat{HFK}$) of 3-stranded pretzel knots of the form ${P(2a,-2b-1,\pm(2c+1))}$ for positive integers $a,b,c$.  This corrects a previous computation by Eftekhary \cite{Eft}; in particular, for the case of ${P(2a,-2b-1,2c+1)}$ where $b<c$ and $b<a-1$, it turns out the rank of $\widehat{HFK}$ is larger than that predicted by that work.
\end{abstract}

\section{Introduction}

\begin{figure}
\centering
\begin{subfigure}{.45\textwidth}
\centering
\includegraphics[width=\textwidth]{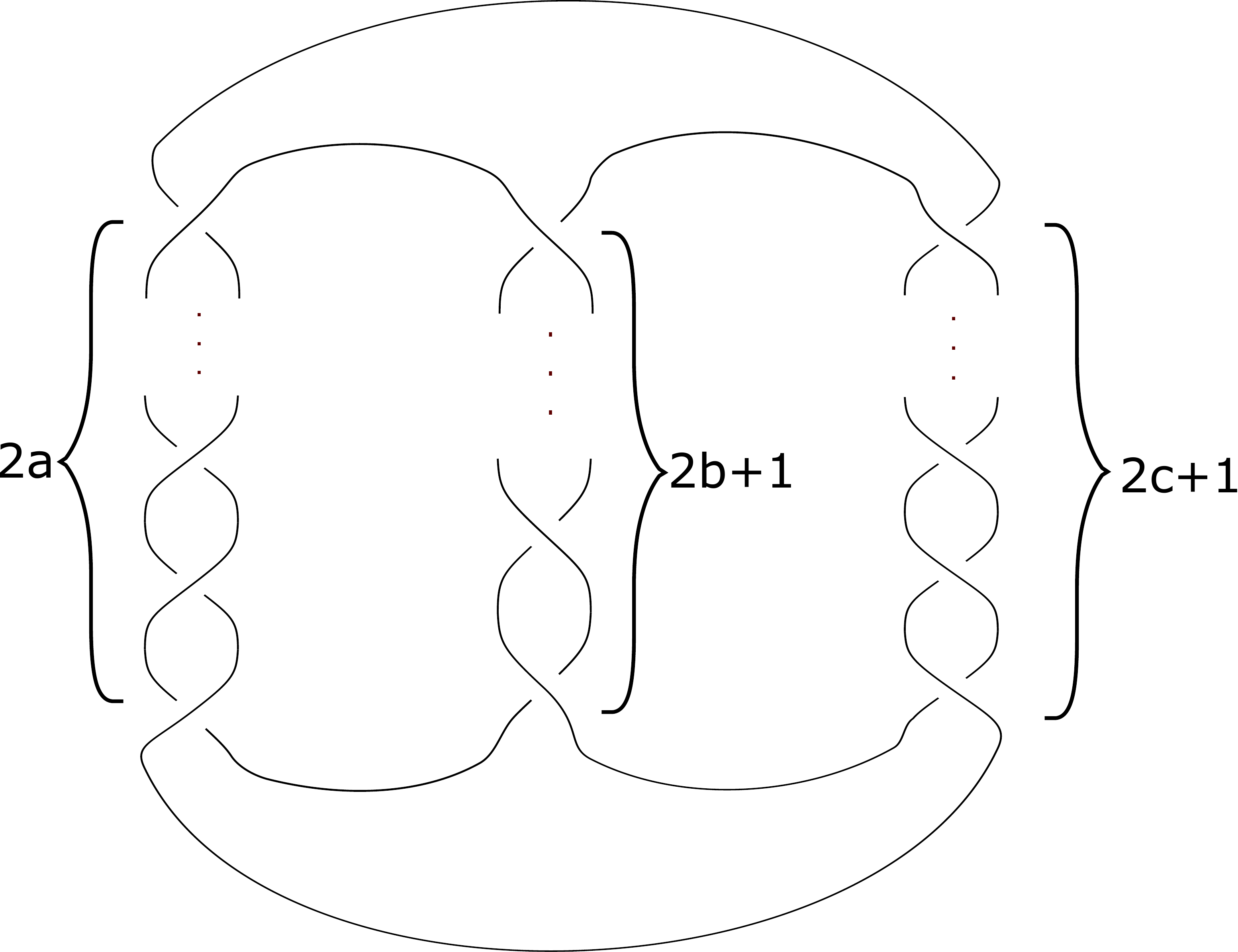}
\caption{The pretzel knot ${P(2a,-2b-1,-2c-1)}$}	
\end{subfigure}
\hfill
\begin{subfigure}{.45\textwidth}
\centering
\includegraphics[width=\textwidth]{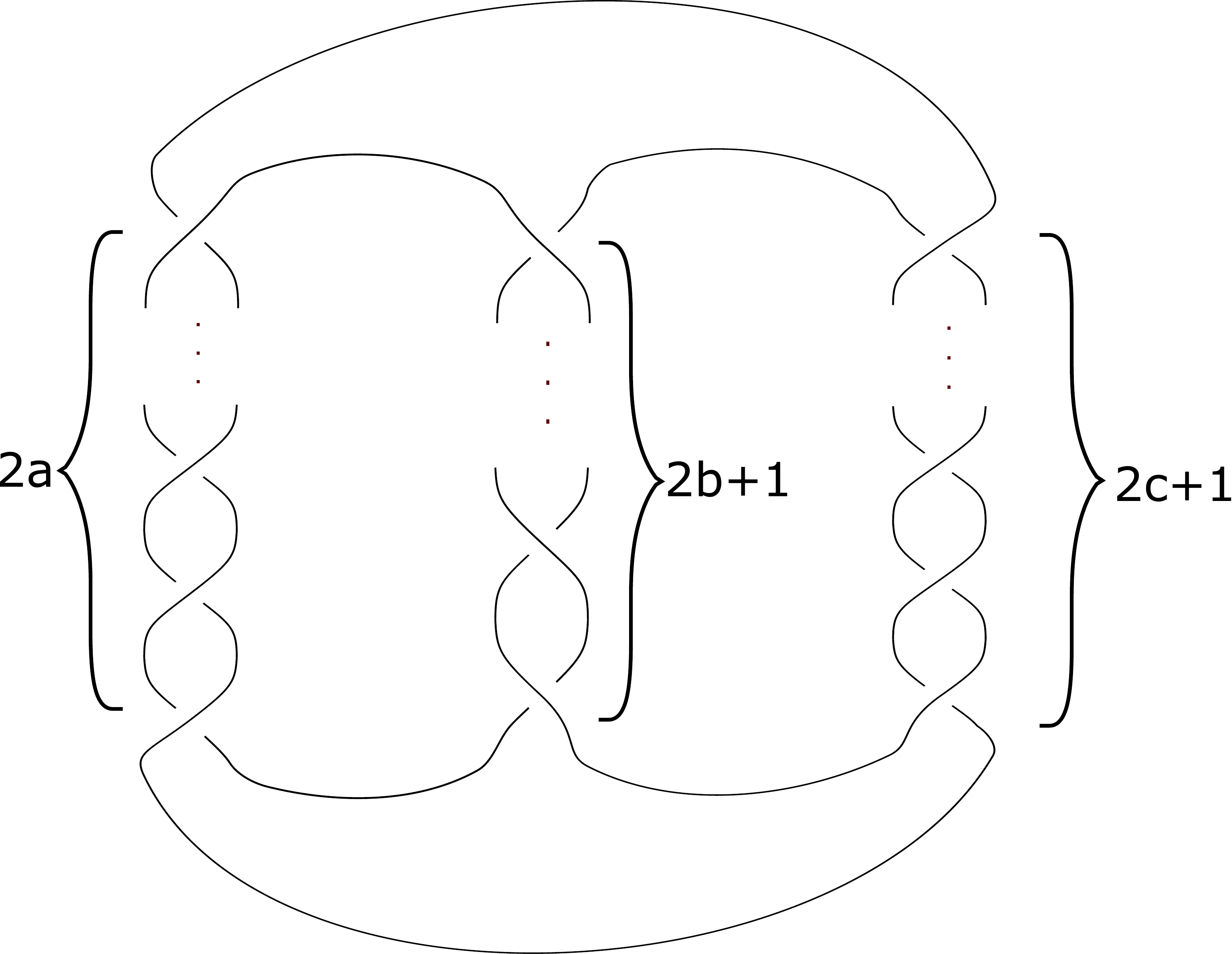}
\caption{The pretzel knot ${P(2a,-2b-1,2c+1)}$}
\end{subfigure}
\caption{The two types of pretzel knots we shall consider.}
\label{fig:pretz}
\end{figure}

The aim of this work is to compute the Knot Floer homology (specifically $\widehat{HFK}$) of a certain family of 3-stranded pretzel knots.  These are, in particular, pretzel knots of the form $P(2a,-2b-1,-2c-1)$ and $P(2a,-2b-1,2c+1)$ for $a,b,c>0$; see Figure \ref{fig:pretz} for an illustration.  Knot Floer Homology for other kinds of 3-stranded pretzel knots is already known.  When all strands have the same ``sign", these are alternating, and so $\widehat{HFK}$ is essentially determined by the Alexander polynomial.  The non-alternating case with all odd twists was computed in \cite{OSzP}.  Our case (non-alternating, even pretzel knots) was considered by Eftekhary in \cite{Eft}.  However, an error in that result was found by Manion; in particular Theorem 2 (the case $P(2a,-2b-1,2c+1)$) does not always hold when $b<\min(a,c)$.  Moreover, Manion, as related in a personal communication with the author, found that the result could be corrected as follows: it turns out that there were exactly two ``exceptional" Alexander gradings where Eftekhary's result fails (in particular, the homology lies on two delta gradings instead of one at those Alexander gradings), and using that combined with fact that for these knots, delta-graded Knot Floer homology is the same as delta-graded (reduced) Khovanov homology, one could deduce from \cite{Man} what the ``missing" ranks had to be.  More recently, Waite in his PhD thesis \cite{Wai} has computed the Knot Floer complex for our family of knots using the bordered algebra techniques from \cite{OSzBord}.  The results in this work agree with Manion's description as well as Waite's results (in particular, cf. his Theorem 6.3), although we use different methods.

The rest of this work is organized as follows: in the next section, we give a brief outline of Knot Floer homology and the tangle invariant $HFT$ introduced by Zibrowius \cite{Zib}, and we describe how to obtain Knot Floer homology by pairing tangle invariants.  We conclude that section by enumerating the different tangle invariants that will arise in our computations for the pretzel knots we are interested in.  Each of the two sections afterwards details the pairing computations involved in the calculation of the Knot Floer homology of the families $P(2a,-2b-1,-2c-1)$ and $P(2a,-2b-1,2c+1)$ respectively.

\subsection{Summary of results}
Here we give a brief overview of our main results.  Firstly, in the case $K=P(2a,-2b-1,-2c-1)$, we shall see in the proof of Theorem \ref{thm1} that $\widehat{HFK}(K)$ can have one of two different forms: if $a>b$ or $a>c$, all generators are supported in a single delta grading (this is also called being \textit{homologically thin}); if $a\leq b$ and $a\leq c$, then the support is in two consecutive delta gradings $\delta_1 <\delta_2$.  In this latter situation, the generators with delta grading $\delta_1$ will have Alexander grading at least two more than (or at most two less than) the maximal (respectively, minimal) Alexander grading of any generator with delta grading $\delta_2$.  In either case, for any Alexander grading, the generators lie in at most one delta grading; it follows that $\widehat{HFK}(K)$ is determined by the Alexander polynomial for these knots. See Figure \ref{fig:HFK_1} for illustrative examples of the two possible cases; for these graphs, $\mu$ is the Maslov grading, $s$ is the Alexander grading, and the delta grading is the difference $s-\mu$ and so lines of constant delta grading are diagonal. A solid dot indicates nonzero rank of $\widehat{HFK}(K)$ with the appropriate gradings.
\begin{figure}
\centering
\begin{subfigure}{.45\textwidth}
\centering
\includegraphics[width=\textwidth]{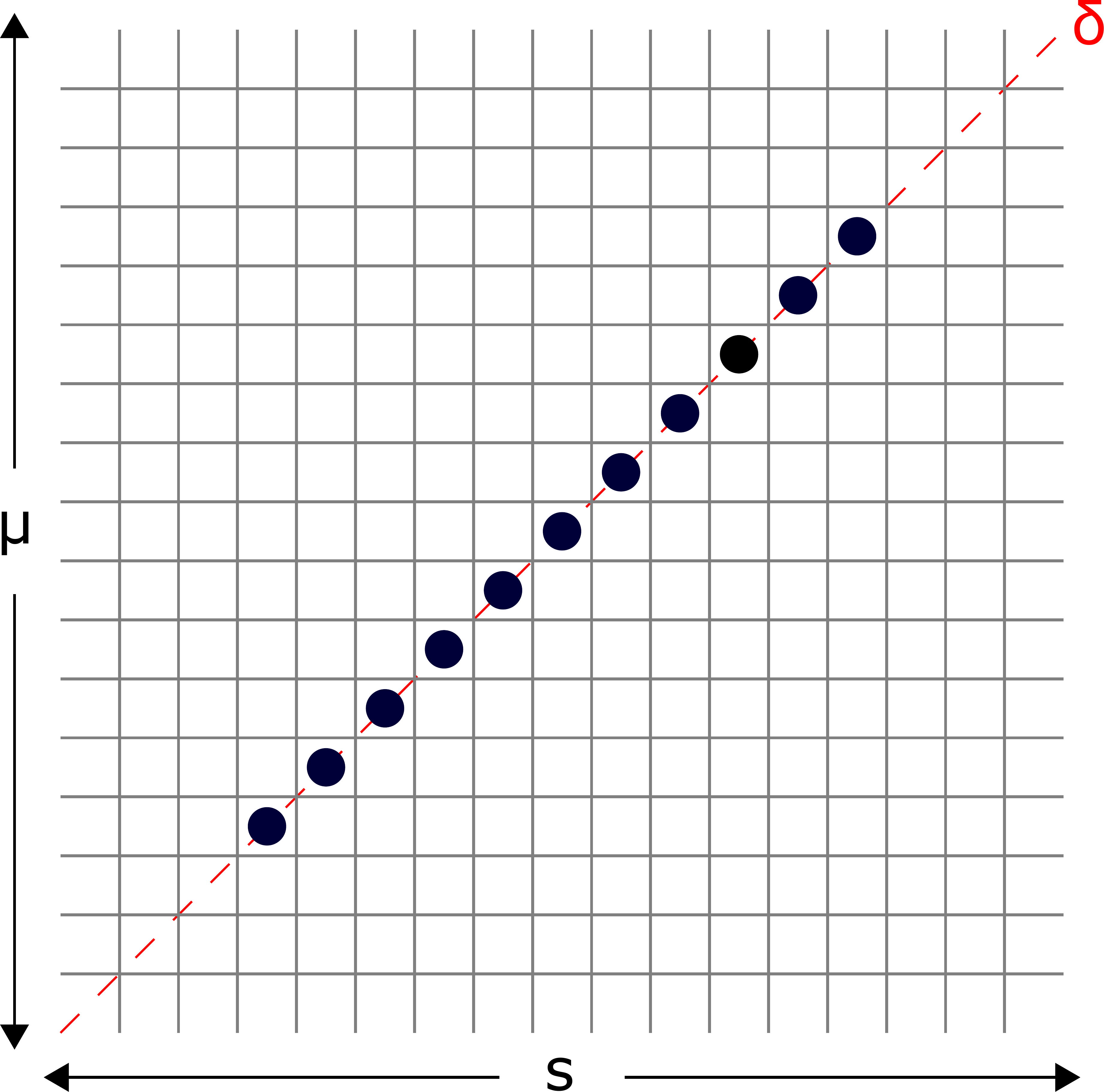}
\caption{If $a>b$ or $a>c$, then $\widehat{HFK}(K)$ is thin}	
\end{subfigure}
\hfill
\begin{subfigure}{.45\textwidth}
\centering
\includegraphics[width=\textwidth]{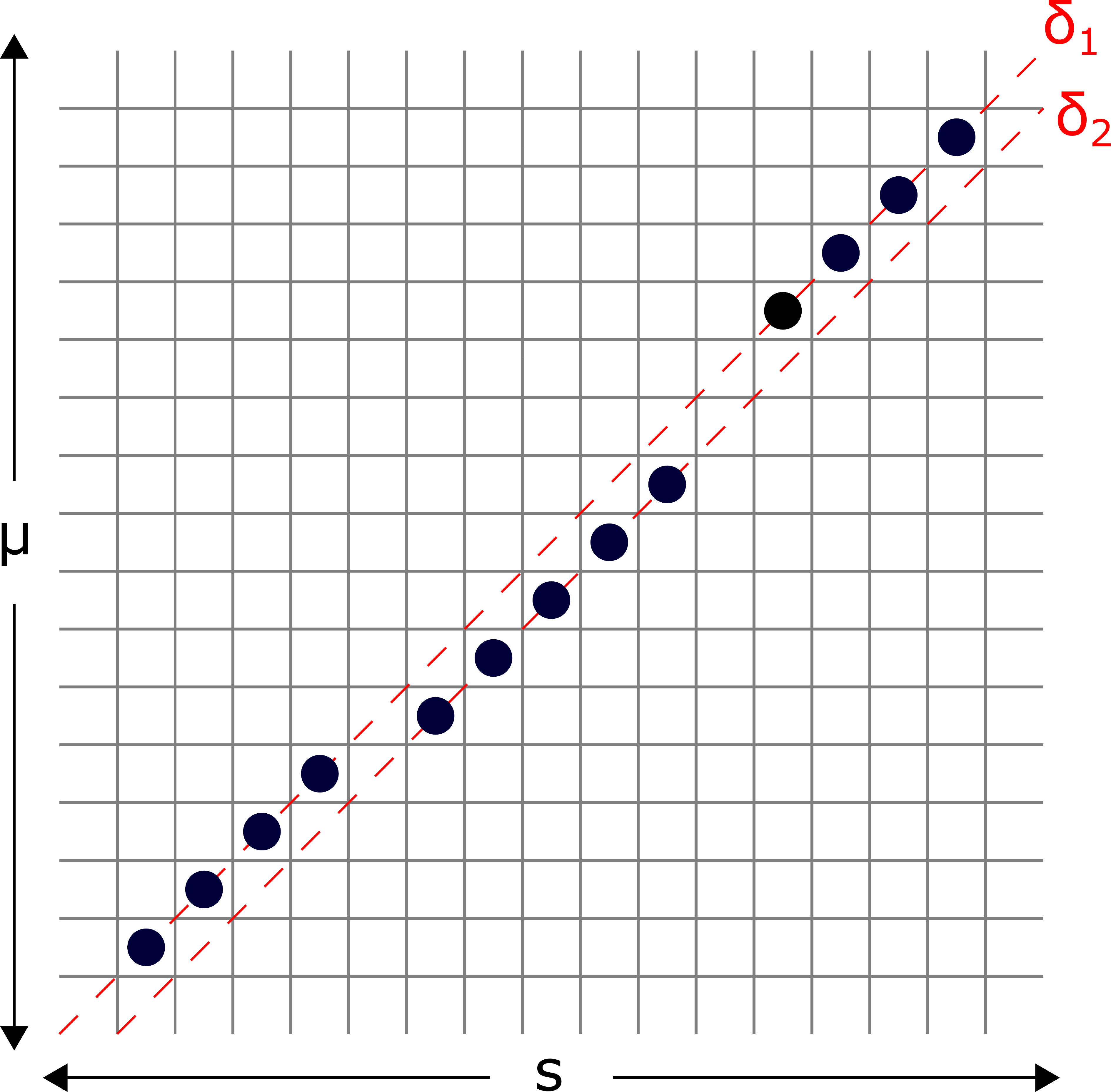}
\caption{If $a\leq b$ and $a\leq c$, then $\widehat{HFK}(K)$ is supported in two delta gradings with no overlap for any Alexander grading}
\end{subfigure}
\caption{The two kinds of forms the Knot Floer homology of $K={P(2a,-2b-1,-2c-1)}$ can take}
\label{fig:HFK_1}
\end{figure}

\begin{figure}
\centering
\begin{subfigure}{.45\textwidth}
\centering
\includegraphics[width=\textwidth]{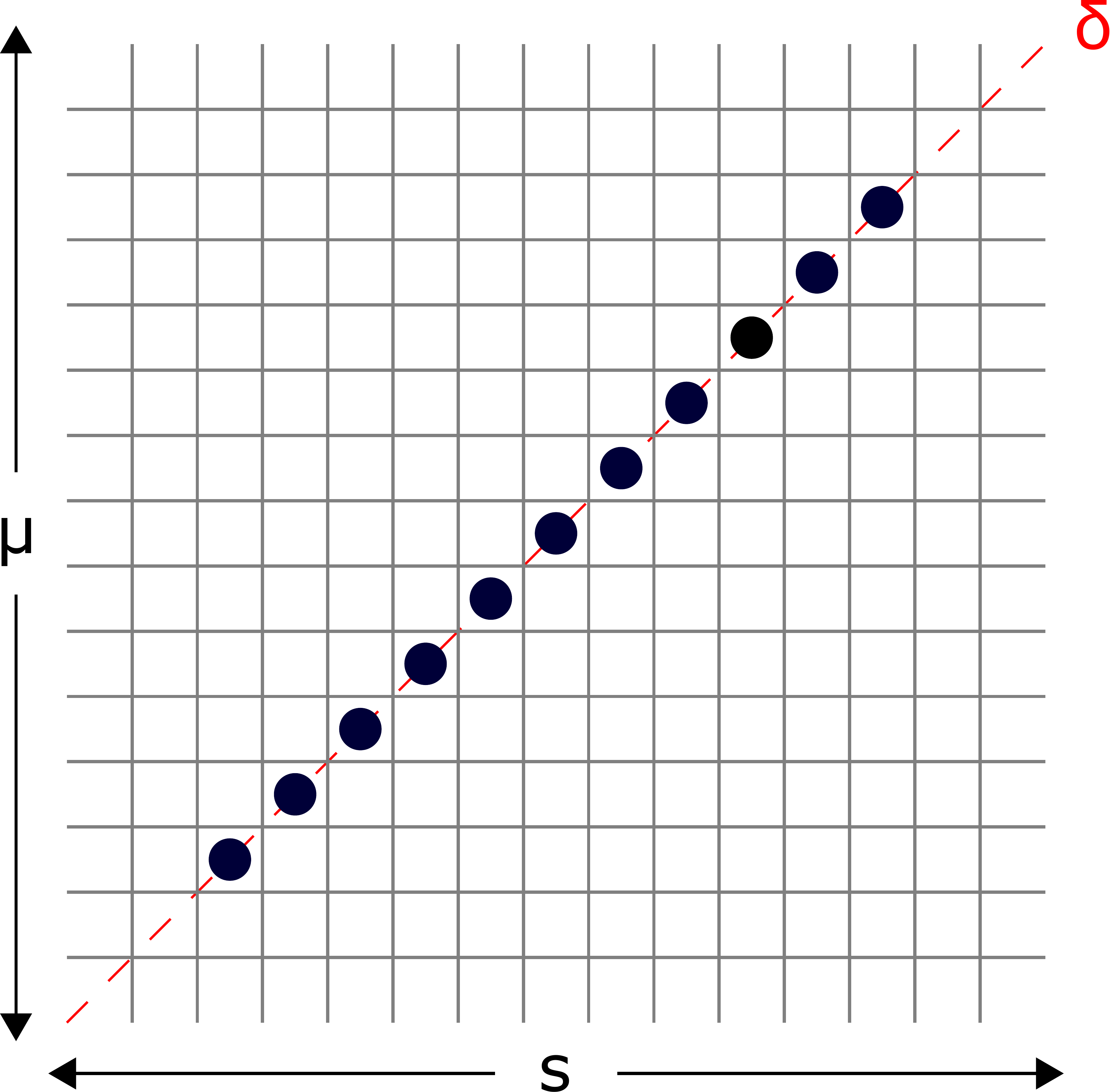}
\caption{If $a\leq b$ or $c\leq b$, then $\widehat{HFK}(K)$ is thin}
\end{subfigure}
\hfill
\begin{subfigure}{.45\textwidth}
\centering
\includegraphics[width=\textwidth]{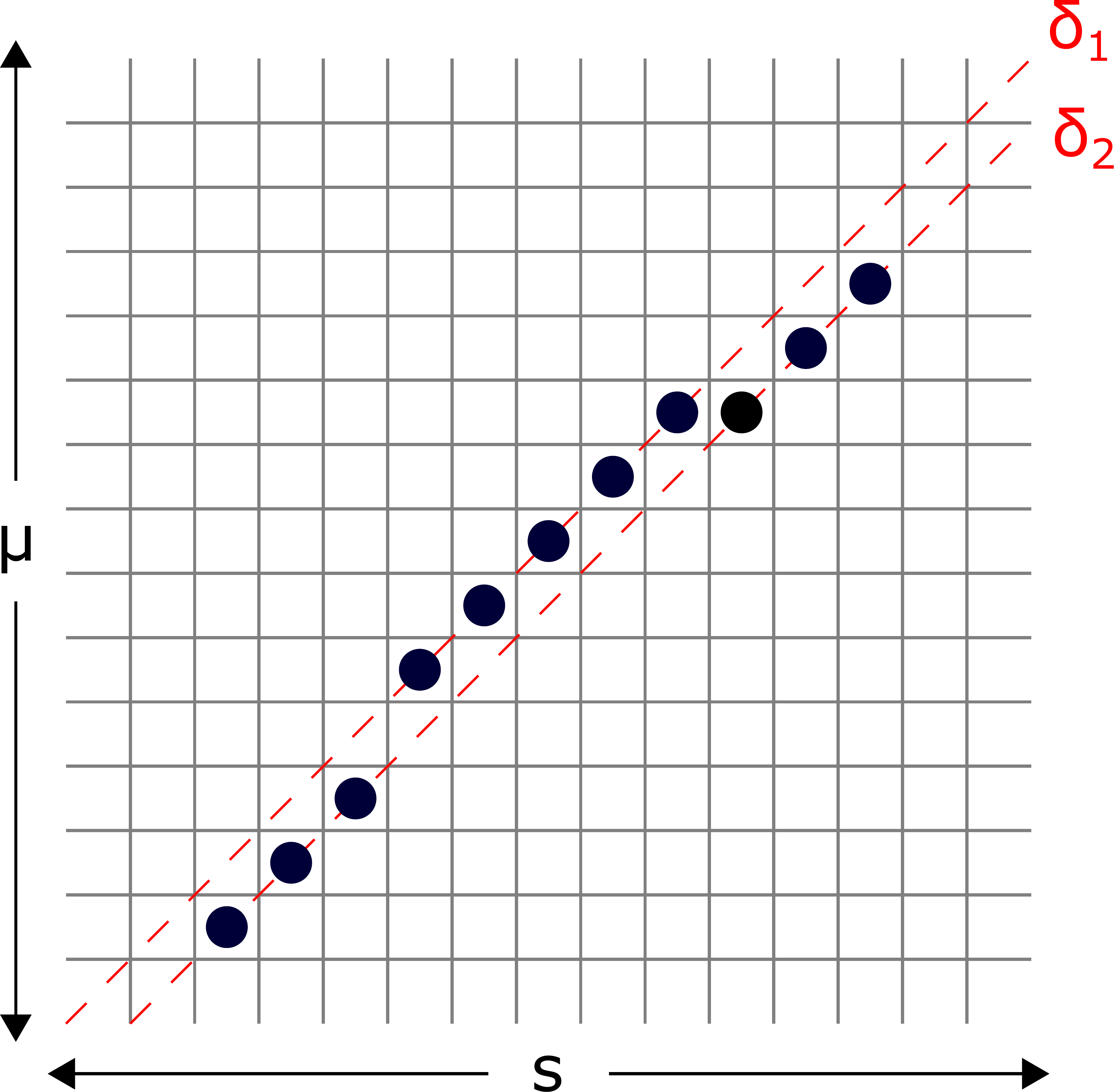}
\caption{If $a=b+1$ and $c\leq b$, then $\widehat{HFK}(K)$ is supported in two delta gradings with no overlap for any Alexander grading}
\end{subfigure}
\\
\begin{subfigure}{.45\textwidth}
\centering
\includegraphics[width=\textwidth]{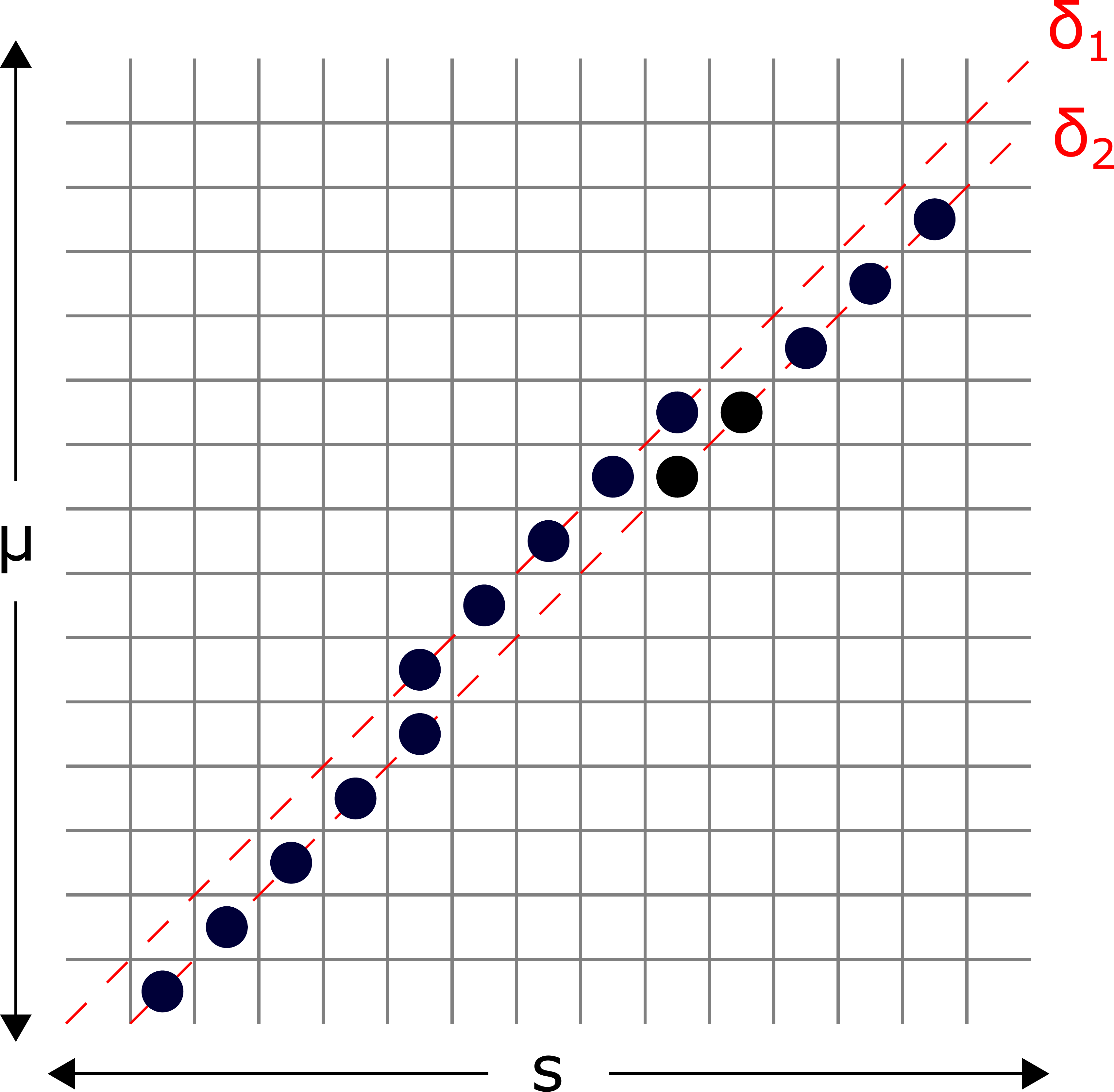}
\caption{If $a>b+1$ and $b<c$, then $\widehat{HFK}(K)$ is supported in two delta gradings with overlap occurring at exactly one pair of Alexander gradings}
\end{subfigure}
\caption{The three kinds of forms Knot Floer homology of ${P(2a,-2b-1,2c+1)}$ can take}
\label{fig:HFK_2}
\end{figure}

Now, in the case $K=P(2a,-2b-1,-2c-1)$, we shall see in the process of proving Theorem \ref{thm2} that there are three main possibilities.  If $a\leq b$ or $c\leq b$, then $\widehat{HFK}(K$ is thin.  Otherwise, it is supported in two consecutive delta gradings $\delta_1<\delta_2$.  There are two further subcases: if $a=b+1$ and $c\leq b$, then the generators with delta grading $\delta_2$ all have Alexander grading at least one more than the maximal (or at most one less than the minimal) Alexander grading of any generator with delta grading $\delta_1$.  Therefore, in this case, as well as in the thin case, the Knot Floer homology is determined by the Alexander polynomial.  The final subcase is when $a>b+1$ and $c>b$.  In this case, however, ``overlap" occurs between the two delta ``lines"; more precisely, at the maximal and minimal Alexander gradings of the generators with delta grading $\delta_1$, there will be nonzero generators with delta grading $\delta_2$, but these are the only Alexander gradings for which this occurs.  Hence, in this case, the rank of the Knot Floer homology for this pair of Alexander gradings is greater than that predicted by the Alexander polynomial.  In particular, this is the case for which Eftekhary's result \cite{Eft} needs correction.  See Figure \ref{fig:HFK_2} for illustrative examples of the three kinds of structures Knot Floer homology can have for these.

\section{Preliminaries}
\subsection{Knot Floer homology and tangle invariants}
Knot Floer homology is a knot invariant introduced by Ozsv\'{a}th and Szab\'{o} \cite{OSzHFK} and independently by J. Rasmussen \cite{Ras}. We briefly recall some of the basic properties of this invariant. Given a knot $K$, we shall be interested in the so-called ``hat" version of Knot Floer homology of $K$, denoted $\widehat{HFK}(K)$. This takes the form of a bigraded vector space over a field $\mathbf{k}$ (for convenience, we will take $\mathbf{k}=\mathbf{Z}/2\mathbf{Z}$, but this should not affect the results).  The two usual gradings are the \textit{Alexander} and \textit{Maslov} gradings.  However, instead of the Maslov grading we shall use the \textit{delta} grading, defined as the difference between the Alexander and Maslov gradings.  The subspace of elements of Alexander grading $s$ and delta grading $\delta$ will be denoted: $\widehat{HFK}_{\delta}(K,s)$.  It is well-known that the classical Alexander polynomial of a knot $K$ can be recovered by taking the graded Euler characteristic of $\widehat{HFK}(K)$:
\[
\Delta_K(t) = \sum_{s,\delta}(-1)^{\mu(s,\delta)}\mathrm{rk}\left(\widehat{HFK}_{\delta}(K,s)\right)t^s
\]
where $\mu(s,\delta)=s-\delta$ is the Maslov grading corresponding to Alexander grading $s$ and delta grading $\delta$.  Conversely, we see that if, for a given Alexander grading $s$, $\widehat{HFK}(K,s)$ is supported in a single delta grading, then the rank of $\widehat{HFK}(K,s)$ is determined by the corresponding coefficient in the Alexander polynomial.

Our computation of the Knot Floer homology will be accomplished using a Heegaard Floer invariant for 4-ended tangles developed by Zibrowius \cite{Zib}.  For our purposes, we shall be interested in the invariant $HFT(T)$ which, for an oriented 4-ended tangle $T$, consists of a collection of (graded) immersed curves on the parametrized 4-punctured sphere $S^2_4$.  The parametrization consists of four arcs cyclically connecting the punctures; we shall depict these as edges of a ``square" which we refer to as the \textit{parametrizing square}.  The ``grading" of a curve consists of Alexander and delta gradings on each of its intersection points with the parametrizing square. (Technically, the curve should also be equipped with a ``local system", which is a matrix $X\in \mathrm{GL}_n(\mathbf{k})$; however, all of the curve invariants that we will need have a trivial local system, so we will suppress this). Moreover, each punctured has an associated sign, $+1$ if the tangle is ``inwardly pointing" at that puncture and $-1$ if it is ``outwardly pointing".  In our case, the punctures on top will be negative and those on the bottom will be positive.  Given two tangles $T_1$ and $T_2$ (appropriately oriented), we denote by their \textit{pairing} the link obtained by gluing their endpoints together as indicated by Figure \ref{fig:pair}.  The \textit{mirror} of a tangle $T$, denoted $\mathrm{mr}(T)$, is obtained by reflecting the tangle across a plane (in a diagram, this may be obtained by reversing all the crossings) and also reversing the orientations of the strands.
\begin{figure}
\centering
\includegraphics[width=.5\textwidth]{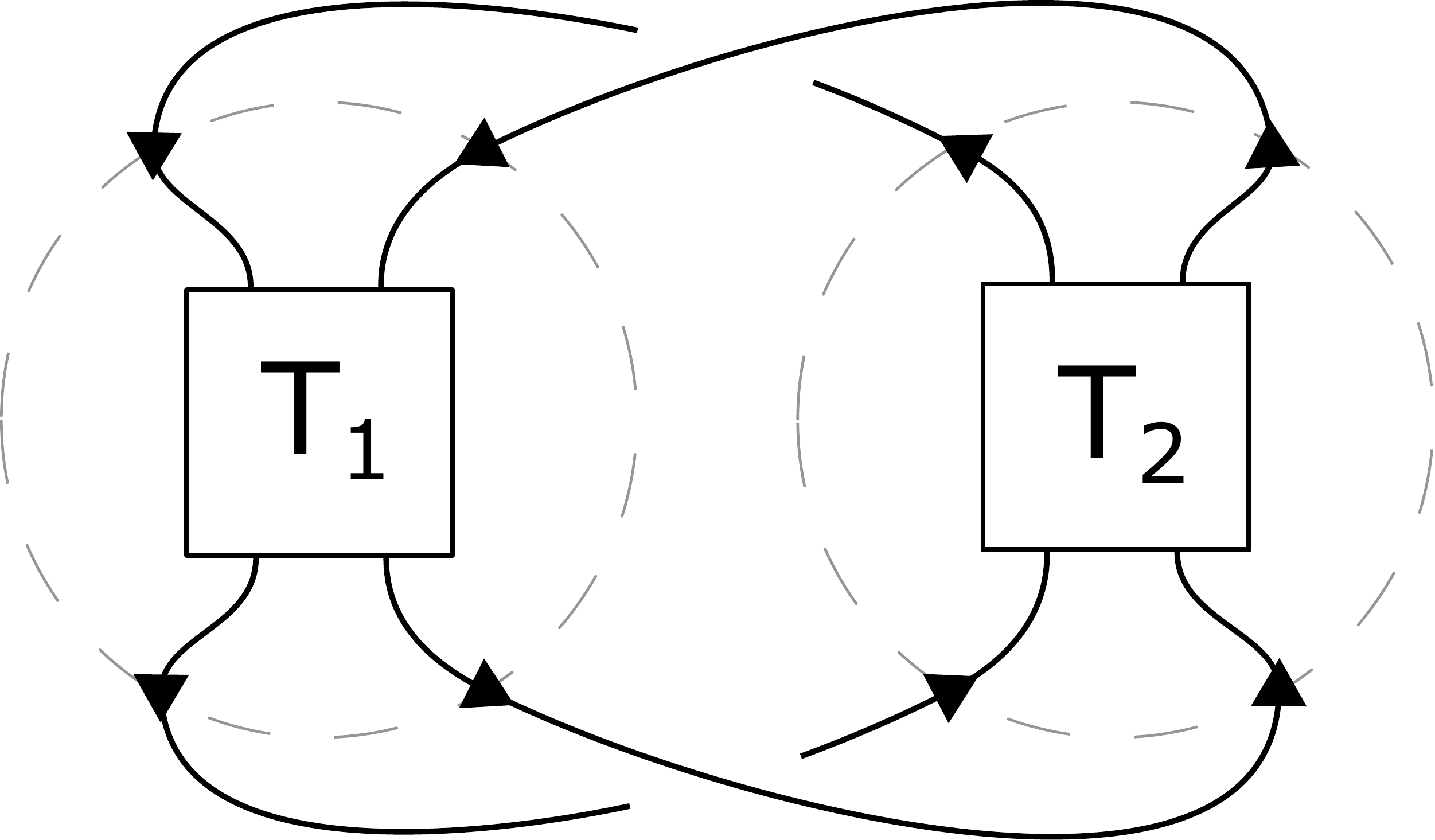}
\caption{The link obtained by pairing the oriented tangles $T_1$ and $T_2$}
\label{fig:pair}
\end{figure}

The following key result will be our main tool of use:
\begin{thm}[Theorem 5.9 of \cite{Zib}]\label{thm:pair}
Let $T_1$ and $T_2$ be (appropriately oriented) 4-ended tangles, and suppose that their pairing results in a knot $K$. Then
\[
\widehat{HFK}(K) \otimes V \cong HF(HFT(\mathrm{mr}(T_1),HFT(T_2))
\]
where $V$ is the 2-dimensional vector space supported in Alexander gradings $\pm 1$ and identical delta gradings (which we may take to be 0).
\end{thm}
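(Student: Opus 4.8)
The plan is to reduce the statement to a gluing formula in Heegaard Floer homology and then reinterpret that formula geometrically as an intersection Floer homology of curves on $S^2_4$. Recall that the immersed-curve invariant $HFT(T)$ is the geometric avatar of an algebraic invariant --- a (curved) type-D structure $\mathrm{CFT}^\partial(T)$ over the algebra $\mathcal{B}$ that Zibrowius associates to the parametrized $4$-punctured sphere --- extracted from a Heegaard diagram for the pair $(S^3,T)$ whose $\alpha$- and $\beta$-curves are adapted to the parametrizing arcs. So the argument has three movements: (i) an algebraic pairing theorem identifying the knot Floer complex of the glued knot with a box tensor product of the two tangle modules; (ii) the translation of that box tensor product into Lagrangian intersection Floer homology of the corresponding immersed multicurves, with the mirror of $T_1$ accounting for the orientation-reversal of the gluing surface; and (iii) a grading computation pinning down the extra factor of $V$.

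For (i), I would choose Heegaard diagrams $\mathcal{H}_1$ and $\mathcal{H}_2$ for the two tangle complements that agree on a neighborhood of $S^2_4$, and glue them to obtain a (multi-pointed) Heegaard diagram for $K$. Stretching the neck along the gluing circle, one shows --- in the style of the pairing theorem in bordered Floer homology of Lipshitz--Ozsv\'ath--Thurston --- that every holomorphic disk contributing to the differential of $\widehat{CFK}(K)$ degenerates into a pair of holomorphic disks, one in each piece, matched along a common sequence of Reeb chords, and conversely that such matched pairs glue back. This yields a graded chain homotopy equivalence $\widehat{CFK}(K)\,\simeq\,\mathrm{mr}(\mathrm{CFT}^\partial(T_1))\boxtimes\mathrm{CFT}^\partial(T_2)$, where mirroring converts the type-D structure of $T_1$ into the object that can be paired against the type-D structure of $T_2$; this is the content of Zibrowius's Theorem~5.9 at the algebraic level.

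For (ii), I would invoke the dictionary --- established by Zibrowius following the work of Hanselman--Rasmussen--Watson on the torus --- between homotopy classes of type-D structures over $\mathcal{B}$ and collections of immersed curves with local systems on $S^2_4$, together with the fact that under this dictionary the box tensor product $\mathrm{mr}(M)\boxtimes N$ computes the Lagrangian Floer homology of the two immersed multicurves once they are perturbed into minimal transverse position: generators are intersection points and the differential counts immersed bigons. Since all curves occurring in our computations carry trivial local systems, this pairing reduces to a count of geometric intersection points, and the reflection of the parametrizing square realizes $\mathrm{mr}$ on the curve side. The Alexander and delta decorations on the intersection points are inherited from those on the two curves, so the identification respects the bigrading; one writes $HF(HFT(\mathrm{mr}(T_1)),HFT(T_2))$ for the resulting homology.

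Finally, for (iii), the extra $\otimes V$ records the discrepancy between the link and knot Floer normalizations: the most symmetric gluing of the two diagrams naturally computes the link Floer homology $\widehat{HFL}$ of $K$ regarded as a one-component link (equivalently, it carries an extra free basepoint, hence a stabilization), and $\widehat{HFL}(K)\cong\widehat{HFK}(K)\otimes V$ with $V$ supported in Alexander gradings $\pm1$ and a single delta grading. I expect the hard part to be movement (i): making the neck-stretching argument rigorous requires controlling index and energy near the gluing region, ruling out limit curves that fail to split off cleanly, and arranging transversality compatibly on the two sides --- this is the technical core, identical in spirit to the proof of the bordered pairing theorem. A secondary point needing care is verifying that the curves-to-algebra correspondence in (ii) genuinely intertwines $\boxtimes$ with Lagrangian Floer homology (including the bookkeeping of local systems, trivial in our setting) and that the reflection conventions match the geometric operation $\mathrm{mr}$.
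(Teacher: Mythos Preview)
The paper does not prove this statement at all: Theorem~\ref{thm:pair} is quoted verbatim as Theorem~5.9 of \cite{Zib} and used as a black box throughout. There is therefore no ``paper's own proof'' to compare your proposal against; the paper's entire contribution lies downstream of this result, in the explicit curve pairings of Sections~3 and~4.

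That said, your sketch is a reasonable high-level summary of how Zibrowius's argument actually proceeds in \cite{Zib}: an algebraic gluing theorem for the peculiar modules (your movement~(i)), a classification of such modules by immersed curves with local systems together with the identification of the derived morphism space with wrapped Lagrangian Floer homology (your movement~(ii)), and the extra $V$ arising from the multi-pointed Heegaard diagram used in the construction (your movement~(iii)). One point of caution: in Zibrowius's setup the algebraic pairing is not literally a type-D/type-A box tensor product in the bordered sense but rather a morphism space between two type-D--like objects (peculiar modules) over a curved algebra, so your phrasing ``$\mathrm{mr}(\mathrm{CFT}^\partial(T_1))\boxtimes\mathrm{CFT}^\partial(T_2)$'' should be read as shorthand for that. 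But since the present paper neither states nor needs any of this internal machinery, your proposal is simply addressing a different task than what the paper does.
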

Here $HF$ represents the (Lagrangian) Floer homology and $HFT$ is the immersed curve tangle invariant associated to the peculiar modules.  In practice, the Floer homology is found by placing the curves in minimal-intersection position.
\begin{rmk}\label{rmk:half}
The isomorphism in Theorem \ref{thm:pair} preserves delta grading up to an overall shift.  The (symmetrized) Alexander gradings are preserved only after they are first doubled in $\widehat{HFK}(K)$; that is, a generator of $\widehat{HFK}(K)$ with Alexander grading $s$ will be mapped under the isomorphism, to a pair of generators with Alexander gradings $2s\pm 1$.
\end{rmk}

\begin{figure}
\centering
\begin{subfigure}{.3\textwidth}
\centering
\includegraphics[width=\textwidth]{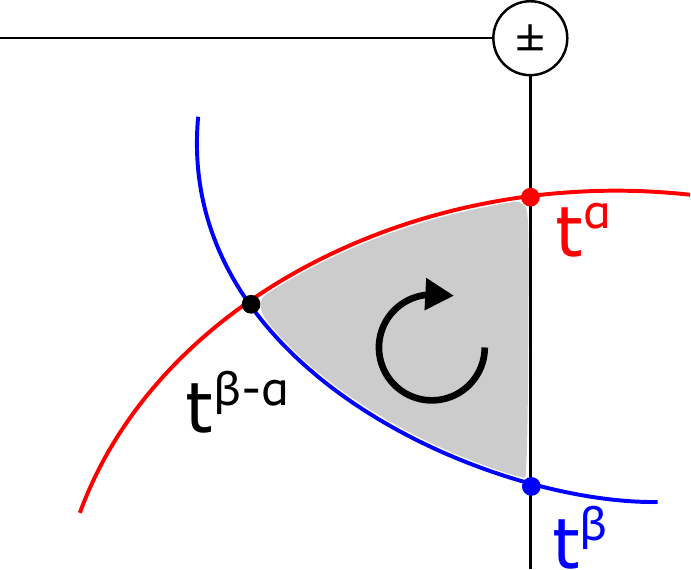}
\caption{The case where the ``disk" does not cover any puncture}
\end{subfigure}
\hfill
\begin{subfigure}{.3\textwidth}
\centering
\includegraphics[width=\textwidth]{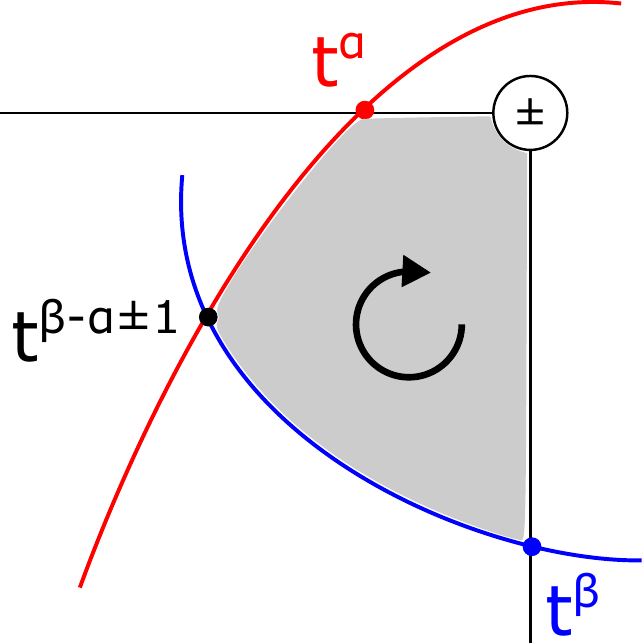}
\caption{The case where the ``disk" covers one puncture}
\end{subfigure}
\hfill
\begin{subfigure}{.3\textwidth}
\centering
\includegraphics[width=\textwidth]{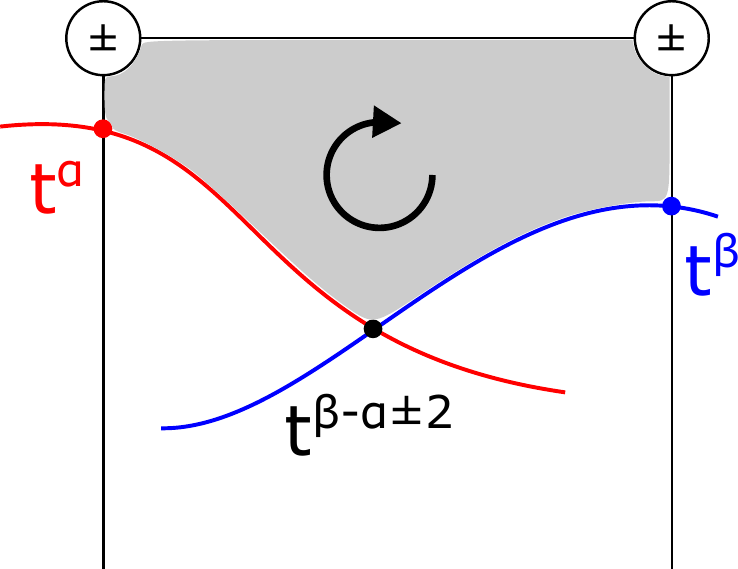}
\caption{The case where the ``disk" covers two punctures (of the same sign)}
\end{subfigure}
\caption{Three typical cases arising in the computation of the Alexander gradings of intersection points}
\label{fig:int_A}
\end{figure}

\begin{figure}
\centering
\begin{subfigure}{.3\textwidth}
\centering
\includegraphics[width=\textwidth]{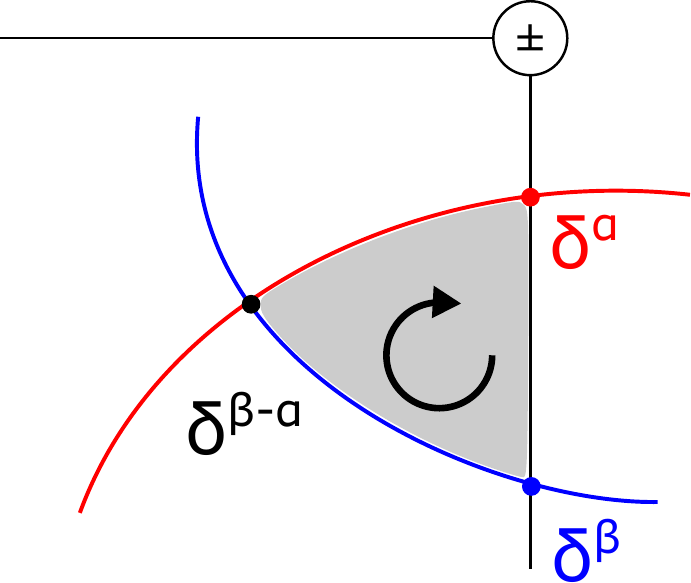}
\caption{The case where the ``disk" does not cover any puncture}
\end{subfigure}
\hfill
\begin{subfigure}{.3\textwidth}
\centering
\includegraphics[width=\textwidth]{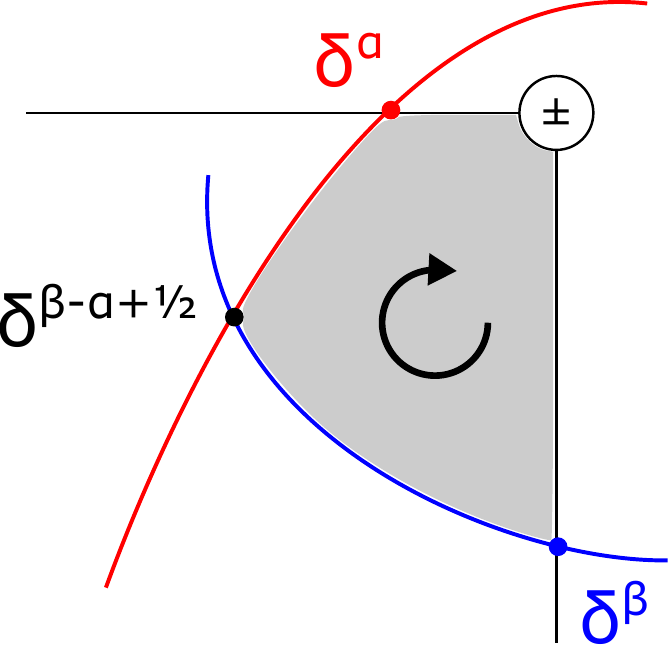}
\caption{The case where the ``disk" covers one puncture}
\end{subfigure}
\hfill
\begin{subfigure}{.3\textwidth}
\centering
\includegraphics[width=\textwidth]{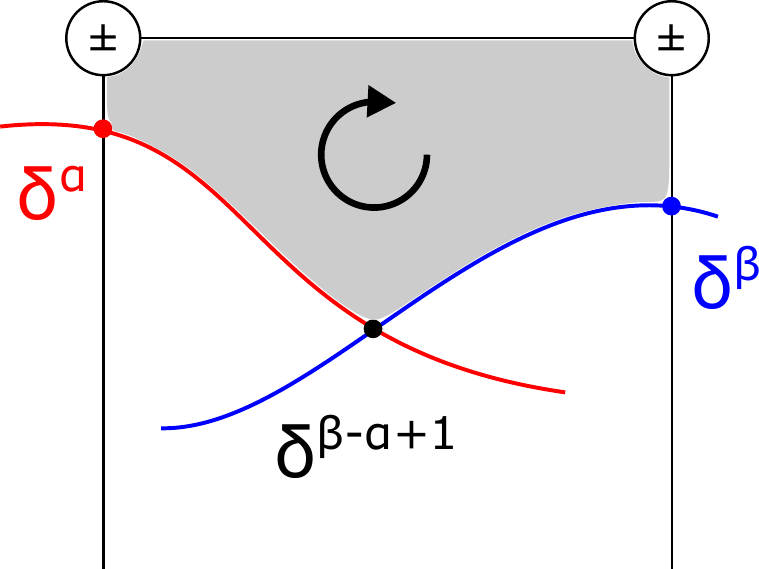}
\caption{The case where the ``disk" covers two punctures}
\end{subfigure}
\caption{Three typical cases arising in the computation of the delta gradings of intersection points}
\label{fig:int_D}
\end{figure}

With the curves in a minimal-intersection position, the intersection points correspond to the generators of the Floer homology.  The Alexander and delta gradings of these generators may be computed as follows.  For each intersection point $z$, one finds a ``disk" $\phi$ in the sphere with boundary lying on both the ``red" and ``blue" curves as well as the parametrizing square.  Let us denote the intersections bewteen the square and the red and blue curves by $x$ and $y$, respectively.  This disk should also be such that when traversing its boundary \emph{clockwise}, the blue curve is traversed from $y$ to $z$, followed by the red curve being traversed from $z$ to $x$.  Further, let $P(\phi)$ denote the set of punctures covered by the disk $\phi$.  Then, the Alexander grading of the generator is given by:
\begin{equation}\label{eq:int_A}
\hat{A}(z)=\hat{A}(y)-\hat{A}(x) + \sum_{p\in P(\phi)}\varepsilon_p
\end{equation}
where $\varepsilon_p$ denotes the sign of the puncture (+1 for an ``incoming" strand and -1 for an ``outgoing" strand).  See Figure \ref{fig:int_A} for an illustration of three common cases.
Similarly, for the delta grading, we have:
\begin{equation}\label{eq:int_D}
\delta(z)=\delta(y)-\delta(x) + \sum_{p\in P(\phi)}\frac{1}{2}
\end{equation}
See Figure \ref{fig:int_D} for an illustration of such calculations; note that we shall often use $\delta^{\beta}t^{\alpha}$ to denote an intersection point/ generator of Floer homology with Alexander grading $\alpha$ and delta grading $beta$, although we may suppress one of the two for convenience.
\begin{figure}
\centering
\includegraphics[width=.5\textwidth]{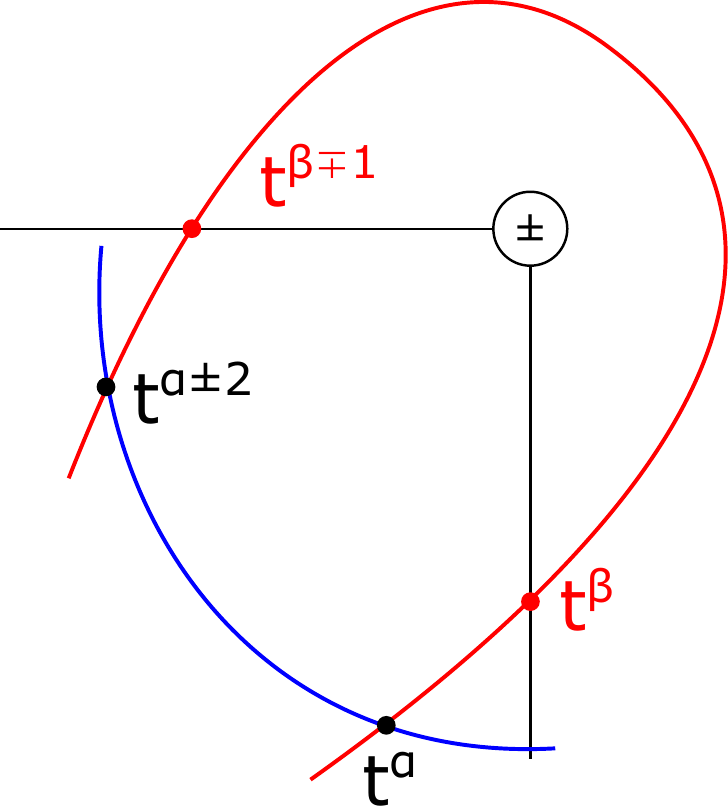}
\caption{A typical ``pair" of intersection points separated by a bigon}
\label{fig:bigon}
\end{figure}
As Theorem \ref{thm:pair} implies, the intersection points between two curves arising from tangles will come in pairs which differ by Alexander grading of 2 and have the same delta grading.  In our examples, they will take the form of pairs lying along parallel arcs, separated by a ``bigon" which covers one puncture, as in Figure \ref{fig:bigon}.  By Remark \ref{rmk:half}, if such a pair has Alexander gradings $\alpha$ and $\alpha \pm 2$, then the corresponding element of $\widehat{HFK}$ will have Alexander grading $\frac{1}{2}(\alpha\pm 1)$.

\subsection*{The immersed curves corresponding to the $(2a,-2b-1)$-pretzel tangle}
\begin{figure}
\centering
\includegraphics[width=.5\textwidth]{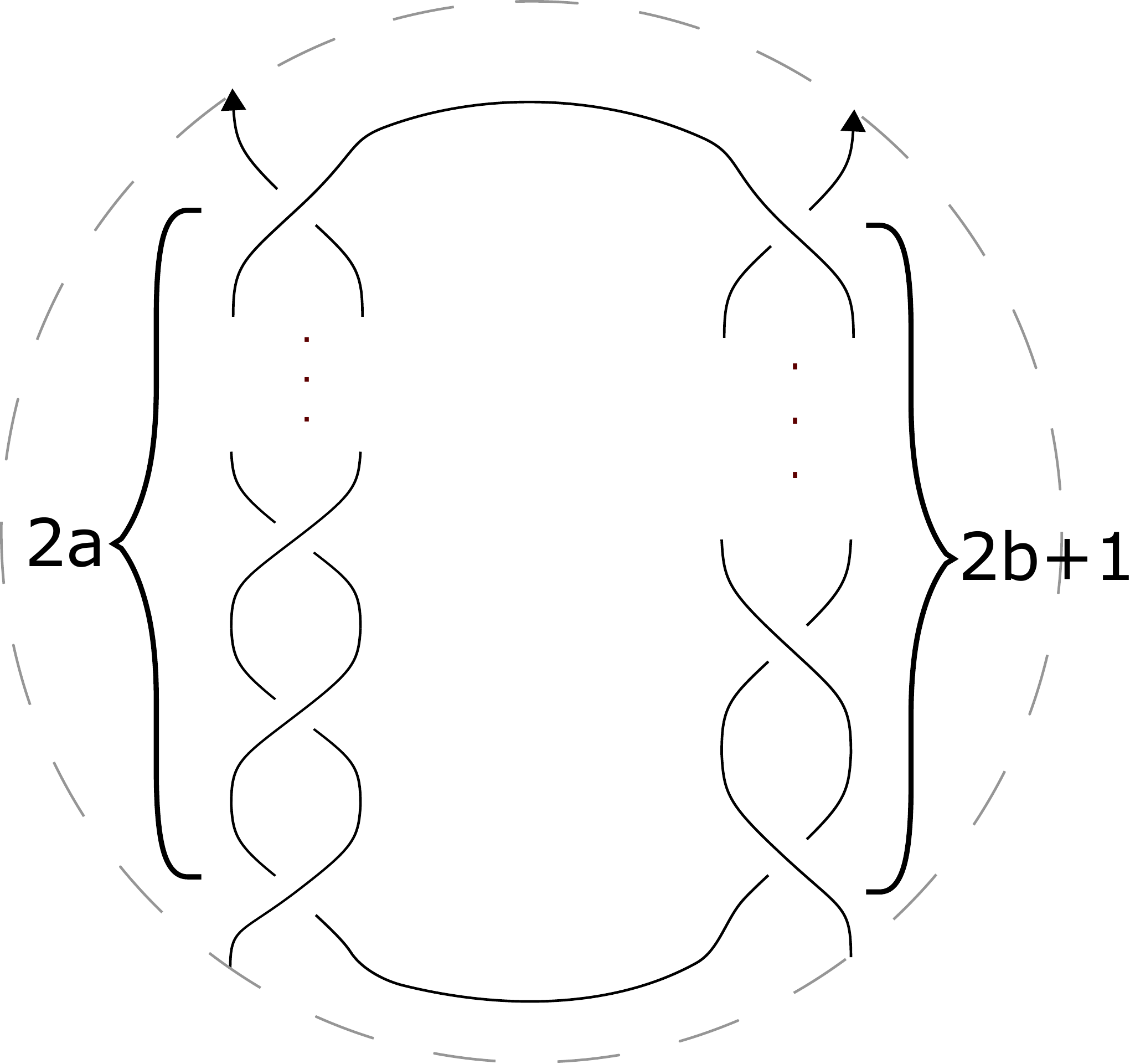}
\caption{The $(2a,-2b-1)$-pretzel tangle}
\label{fig:Ptangle}
\end{figure}
We wish to give a description of the tangle invariant $HFT$ for the $(2a,-2b-1)$-pretzel tangle (pictured in Figure \ref{fig:Ptangle}).  First, we shall recall the two types of standard curve invariants from \cite{Zib}.  These are the \textit{rational} and \textit{special} curves.  Let $\mathbf{T}^2_4$ denote the 4-punctured torus and note that it is a 2-fold covering space of the 4-punctured sphere $\rho: \mathbf{T}^2_4 \rightarrow S^2_4$.  The usual covering of the torus by the plane restricts to a cover $\sigma: \mathbf{R}^2 \setminus \mathbf{Z}^2 \rightarrow \mathbf{T}^2_4$.  Moreover, the maps $\sigma$ and $\sigma$ can be arranged such that the parametrizing square lifts to the ``standard grid" $\mathbf{R}\times \mathbf{Z} \cup \mathbf{Z}\times \mathbf{R}$.

With these conventions, given an extended rational number $\lambda \in \mathbf{Q}\cup\{\infty\}$, the rational curve $r(\lambda)$ is the image under the projection $\rho\circ \sigma$ of a line with slope $\lambda$ (which misses the integer lattice).  It is a fact that if $T$ is a rational tangle with associated rational number $\lambda$ then $HFT(T)$ consists precisely of the curve $r(\lambda)$.  Note also that if $\frac{A}{B}$ is a reduced fraction, then the curve $r\left(\frac{A}{B}\right)$ is an embedded curve which intersects the top and bottom edges of the parametrizing square $|A|$ times each and the left and right edges $|B|$ times each.  See Figures \ref{fig:rat1}--\ref{fig:rat3} for some examples.

The (standard) special curves (also called ``irrational curves" in earlier versions of \cite{Zib}) come in two types: they are denoted $i_k(1,4)$ and $i_k(2,3)$ for each positive integer $k$.  These are projections under $\rho\circ \sigma$ of ``almost" horizontal curves supported near $\mathbf{R}\times \{0\}$ or $\mathbf{R}\times \{1\}$.  These ``almost" horizontal curves repeatedly pass ``above" $2k$ punctures before crossing down and passing ``below" $2k$ punctures and then crossing up again.  In the $(1,4)$ case, the immersed curve ends up supported near a neighborhood of the top edge of the parametrizing square, while in the $(2,3)$ case, it is supported near the bottom edge.  See Figure \ref{fig:irr} for an illustration.

Additionally, given an immersed curve $\Gamma$, we shall encode the (Alexander) grading of this curve by $\Gamma t^mt^M$, where $m$ and $M$ denote the minimal and maximal Alexander grading, respectively, of any intersection point of $\Gamma$ with the parametrizing square.  We shall not explicitly encode the delta grading in such a way; instead, in what follows, we shall always assume curves have delta grading as depicted in Figures \ref{fig:irr}--\ref{fig:rat3}. Using this notation, we may apply  \S 6.3 of \cite{Zib} and write that that the immersed curves corresponding to ${HFT(P(2a,-2b-1))}$ are:

\textbf{Case I: $a\leq b$}\phantomsection\label{c1}
\begin{align*}
& i_1(1,4)t^{-2b-2}t^{-2b+2}\\
& i_1(1,4)t^{-2b}t^{-2b+4}\\
& \quad \vdots \\
& i_{a-1}(1,4)t^{-2b-2}t^{4a-2b-6}\\
& i_{a-1}(1,4)t^{-2b}t^{4a-2b-4}\\
& i_a(1,4)t^{-2b-2}t^{4a-2b-2}\\
& \textstyle r\left(\frac{1}{2a}\right)t^{-2b}t^{4a-2b}\\
& \quad \vdots \\
& \textstyle r\left(\frac{1}{2a}\right)t^{2b-4a}t^{2b}\\
& i_a(2,3)t^{2b-4a+2}t^{2b+2}\\
& i_{a-1}(2,3)t^{2b-4a+4}t^{2b}\\
& i_{a-1}(2,3)t^{2b-4a+6}t^{2b+2}\\
& \quad \vdots \\
& i_1(2,3)t^{2b-4}t^{2b}\\
& i_1(2,3)t^{2b-2}t^{2b+2}
\end{align*}
\textbf{Case II: $a = b + 1$}\phantomsection\label{c2}
\begin{align*}
& i_1(1,4)t^{-2a}t^{-2a+4}\\
& i_1(1,4)t^{-2a+2}t^{-2a+6}\\
& \quad \vdots \\
& i_{a-1}(1,4)t^{-2a}t^{2a-4}\\
& i_{a-1}(1,4)t^{-2a+2}t^{2a-2}\\
& \textstyle r\left(-\frac{1}{2a}\right)t^{-2a}t^{2a}\\
& i_{a-1}(2,3)t^{-2a+2}t^{2a-2}\\
& i_{a-1}(2,3)t^{-2a+4}t^{2a}\\
& \quad \vdots \\
& i_1(2,3)t^{2a-6}t^{2a-2}\\
& i_1(2,3)t^{2a-4}t^{2a}
\end{align*}
\textbf{Case III: $a > b + 1$}\phantomsection\label{c3}
\begin{align*}
& i_1(1,4)t^{-2b-2}t^{-2b+2}\\
& i_1(1,4)t^{-2b}t^{-2b+4}\\
& i_2(1,4)t^{-2b-2}t^{-2b+6}\\
& i_2(1,4)t^{-2b}t^{-2b+8}\\
& \quad \vdots \\
& i_b(1,4)t^{-2b-2}t^{2b-2}\\
& i_b(1,4)t^{-2b}t^{2b}\\
& \textstyle r\left(-\frac{2(a-b)-1}{4b(a-b-1)+2a}\right)t^{-2b-2}t^{2b+2}\\
& i_b(2,3)t^{-2b}t^{2b}\\
& i_b(2,3)t^{-2b+2}t^{2b+2}\\
& i_{b-1}(2,3)t^{-2b+4}t^{2b}\\
& i_{b-1}(2,3)t^{-2b+6}t^{2b+2}\\
& \quad \vdots \\
& i_1(2,3)t^{2b-4}t^{2b}\\
& i_1(2,3)t^{2b-2}t^{2b+2}
\end{align*}

Figures \ref{fig:irr}--\ref{fig:rat3} depict all the different kinds of immersed curves relevant to our calculation.  The curves corresponding to the $(2a,-2b-1)$-pretzel tangle are in blue, while the curves corresponding to the rational tangles $\pm\frac{1}{2c+1}$ are in red.  Note that Figure \ref{fig:rat2} does not depict the most general case of rational curve $r\left(-\frac{A}{B}\right)$.  For our purposes, we are interested, by Case \hyperref[c3]{III}, in the situation where $A=2(a-b)-1$ is odd, $B=4b(a-b-1)+2a$, and $M=2b+2$.

\begin{figure}
\centering
\begin{subfigure}{.45\textwidth}
\centering
\includegraphics[width=\textwidth]{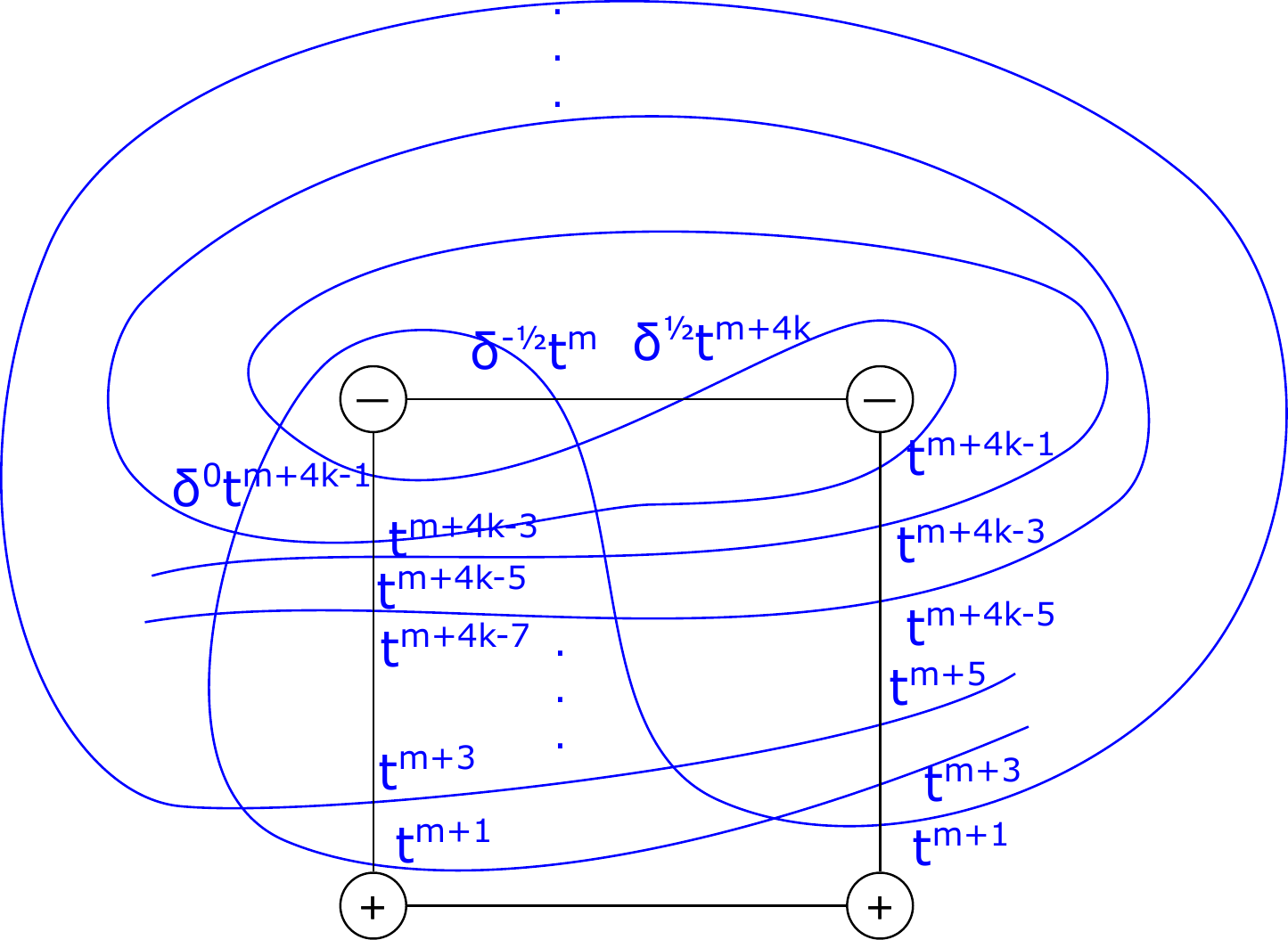}
\caption{The immersed curve associated to $i_k(1,4)t^mt^{m+4k}$}	
\end{subfigure}
\hfill
\begin{subfigure}{.45\textwidth}
\centering
\includegraphics[width=\textwidth]{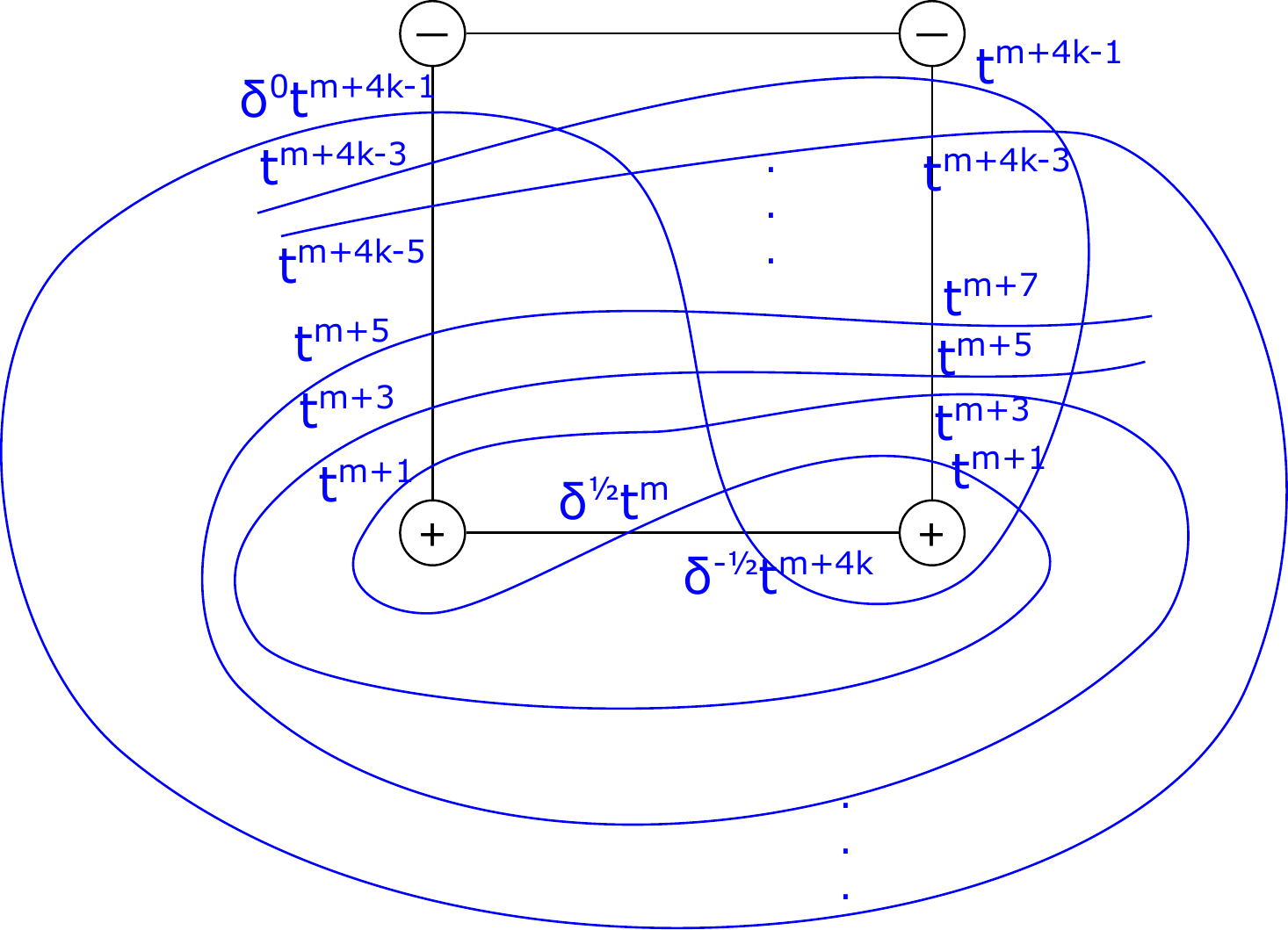}
\caption{The immersed curve associated to $i_k(2,3)t^mt^{m+4k}$}
\end{subfigure}
\caption{Special curves arising in the $HFT$ invariant for the $(2a,-2b-1)$-pretzel tangle}
\label{fig:irr}
\end{figure}

\begin{figure}
\centering
\begin{subfigure}{.45\textwidth}
\centering
\includegraphics[width=\textwidth]{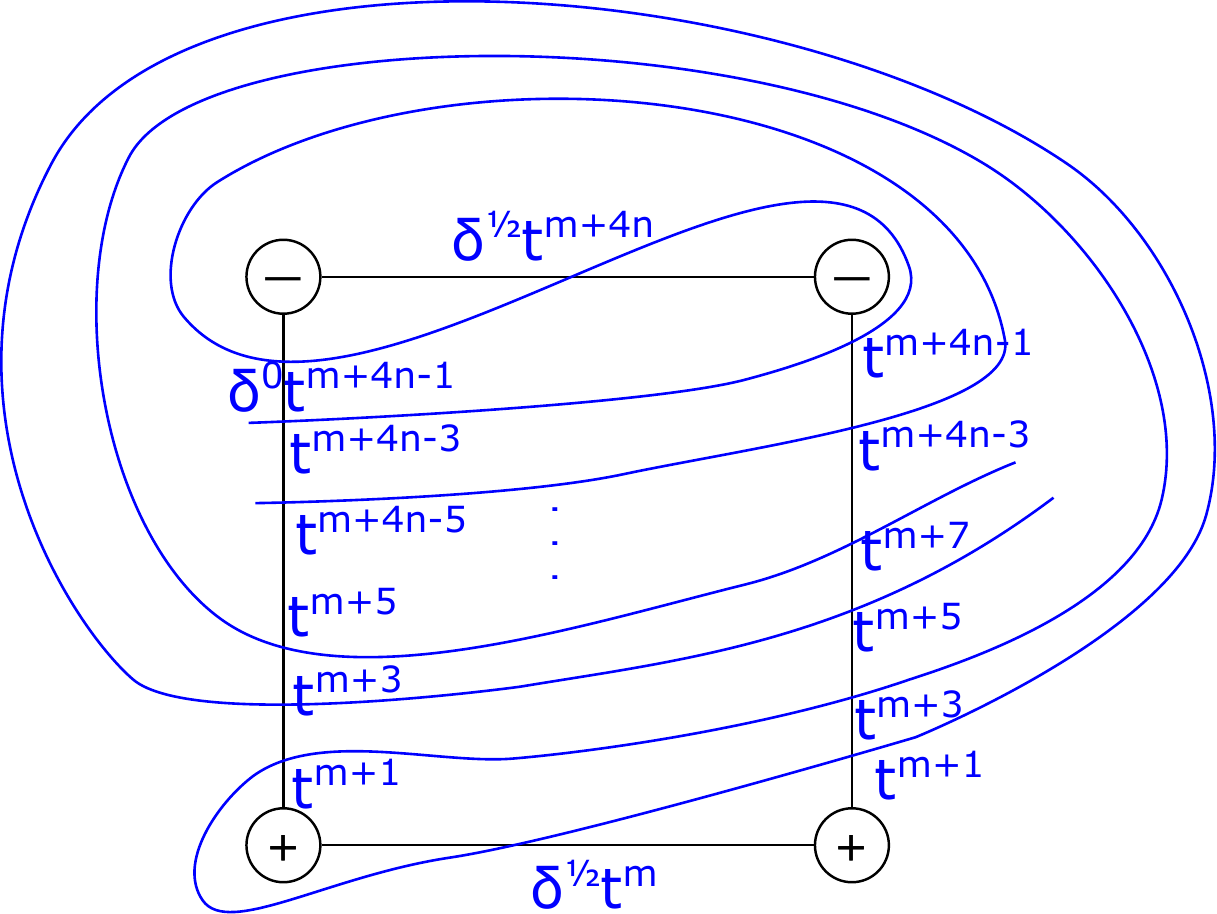}
\caption{The immersed curve associated to $r\left(\frac{1}{2n}\right)t^mt^{m+4n}$}
\end{subfigure}
\hfill
\begin{subfigure}{.45\textwidth}
\centering
\includegraphics[width=\textwidth]{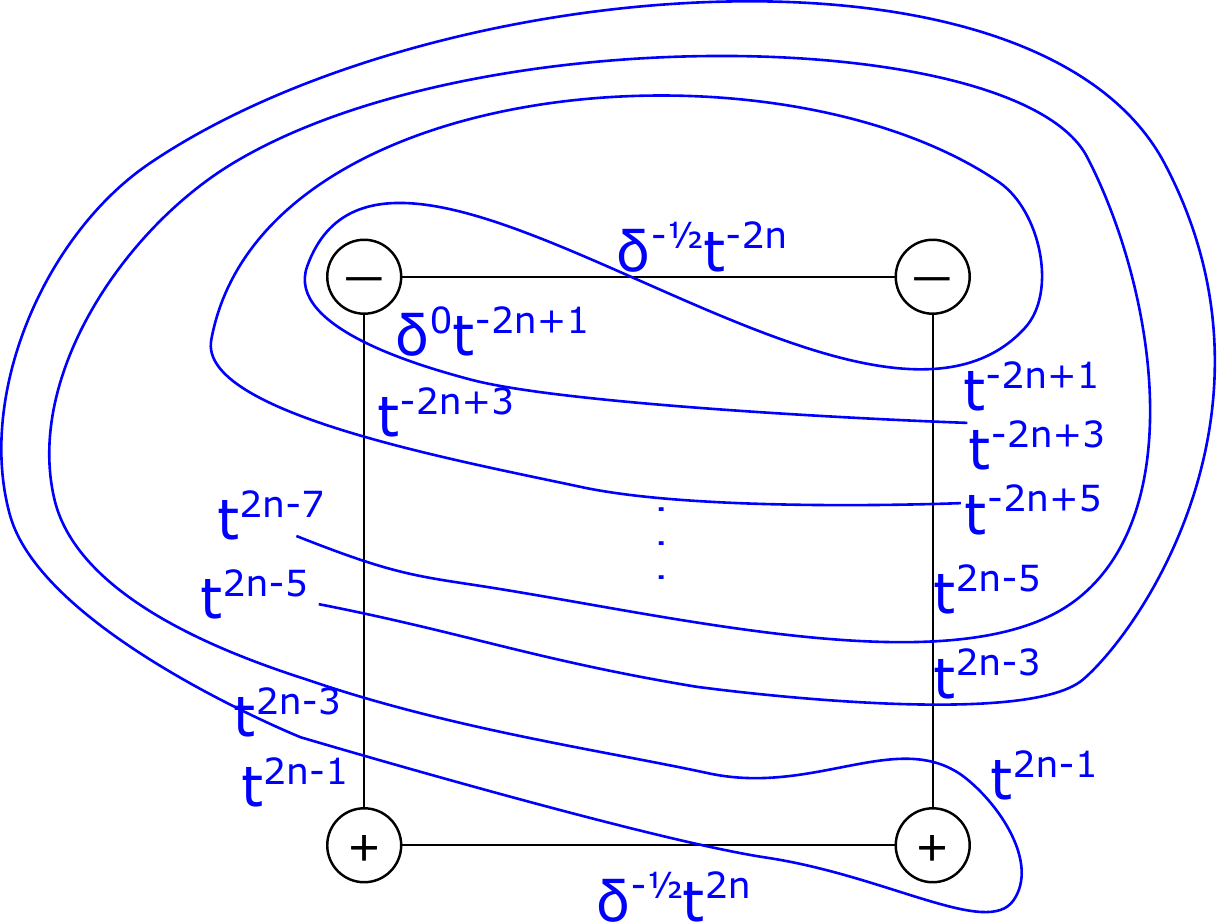}
\caption{The immersed curve associated to $r\left(-\frac{1}{2n}\right)t^{-2n}t^{2n}$}
\end{subfigure}
\caption{Rational curves of slope $\pm \frac{1}{2n}$}
\label{fig:rat1}
\end{figure}

\begin{figure}
\centering
\includegraphics[width=\textwidth]{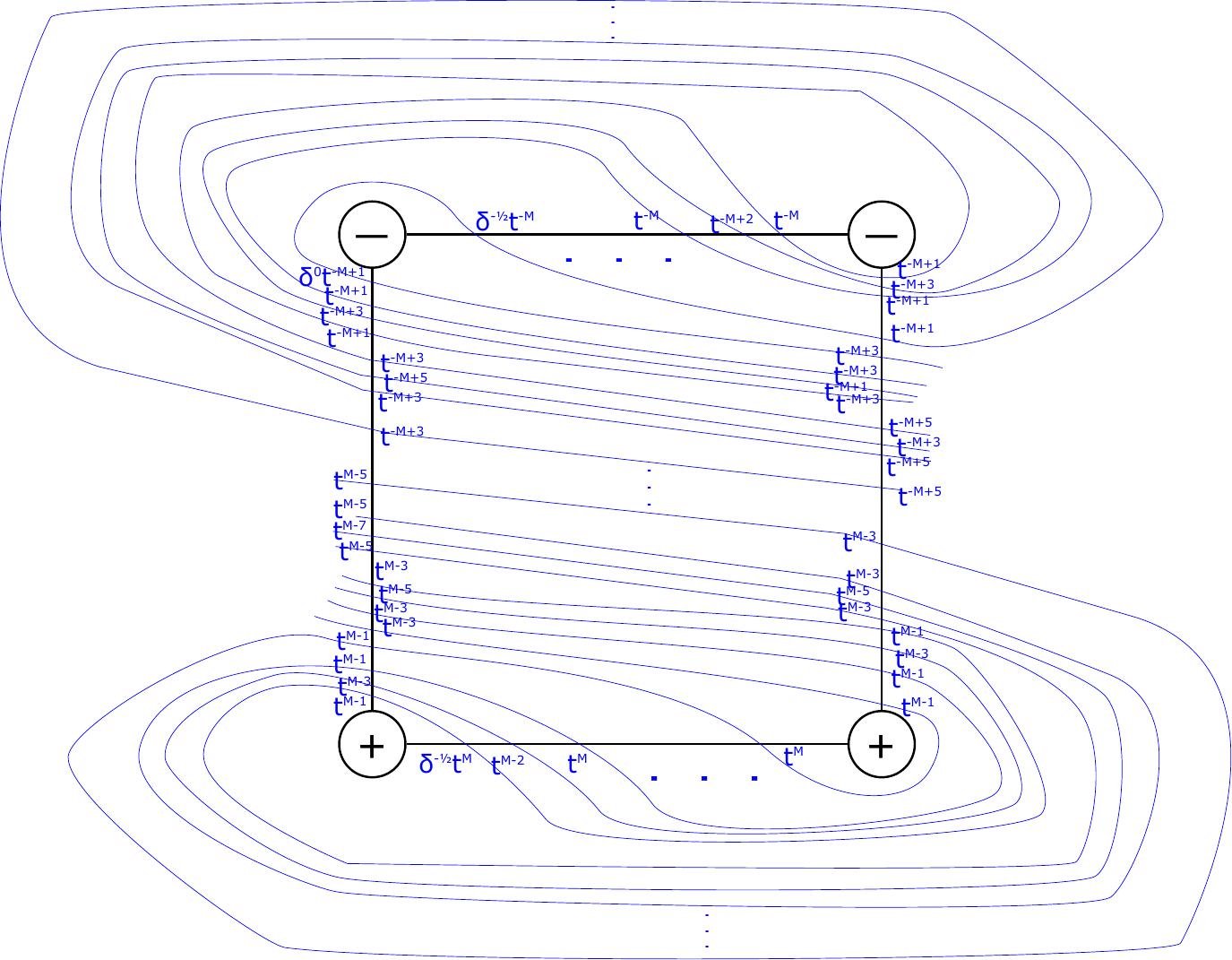}
\caption{The immersed curve associated to $r\left(-\frac{A}{B}\right)t^{-M}t^M$}
\label{fig:rat2}
\end{figure}

\begin{figure}
\centering
\begin{subfigure}{.45\textwidth}
\centering
\includegraphics[width=\textwidth]{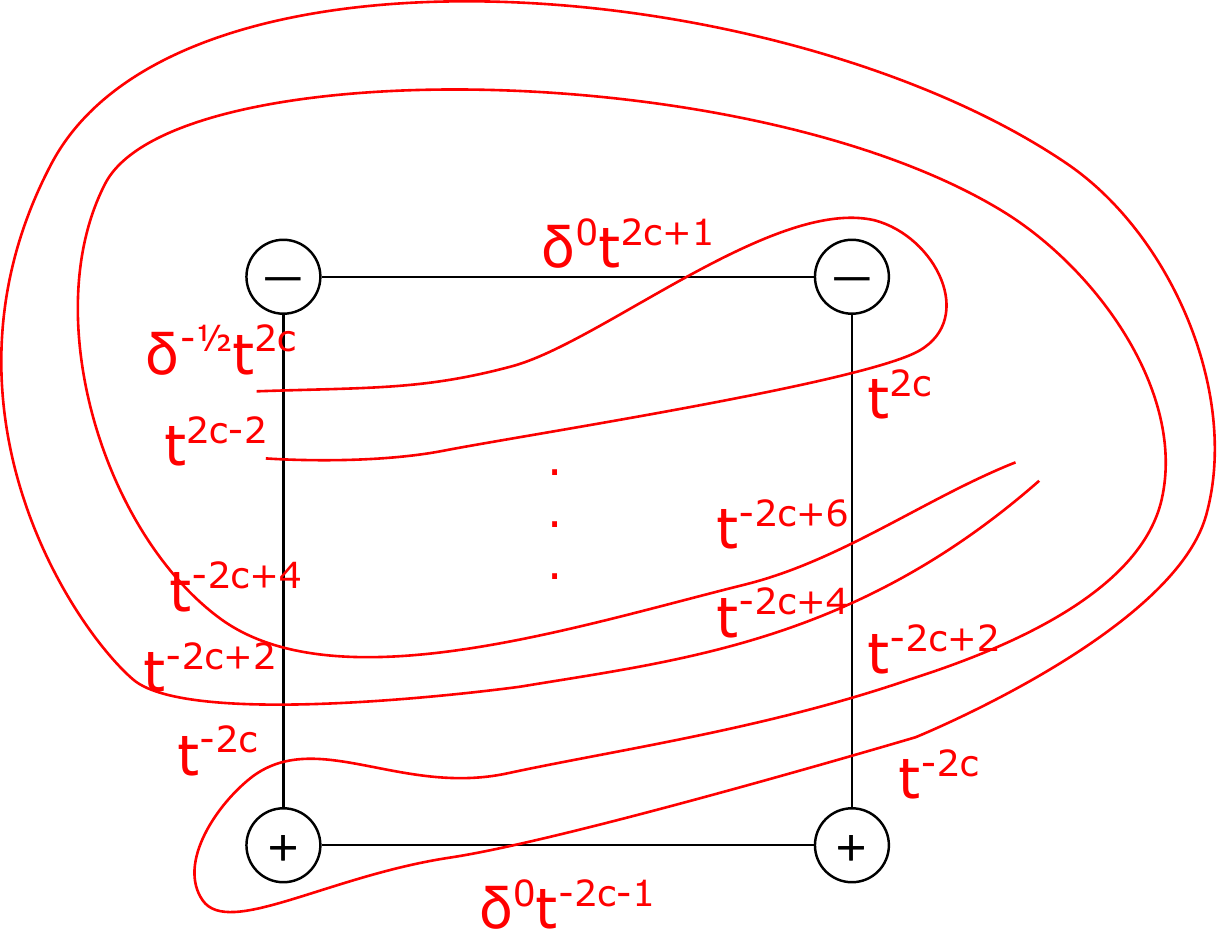}
\caption{The immersed curve associated to $r\left(\frac{1}{2a+1}\right)t^{-2a+1}t^{2a+1}$}
\end{subfigure}
\hfill
\begin{subfigure}{.45\textwidth}
\centering
\includegraphics[width=\textwidth]{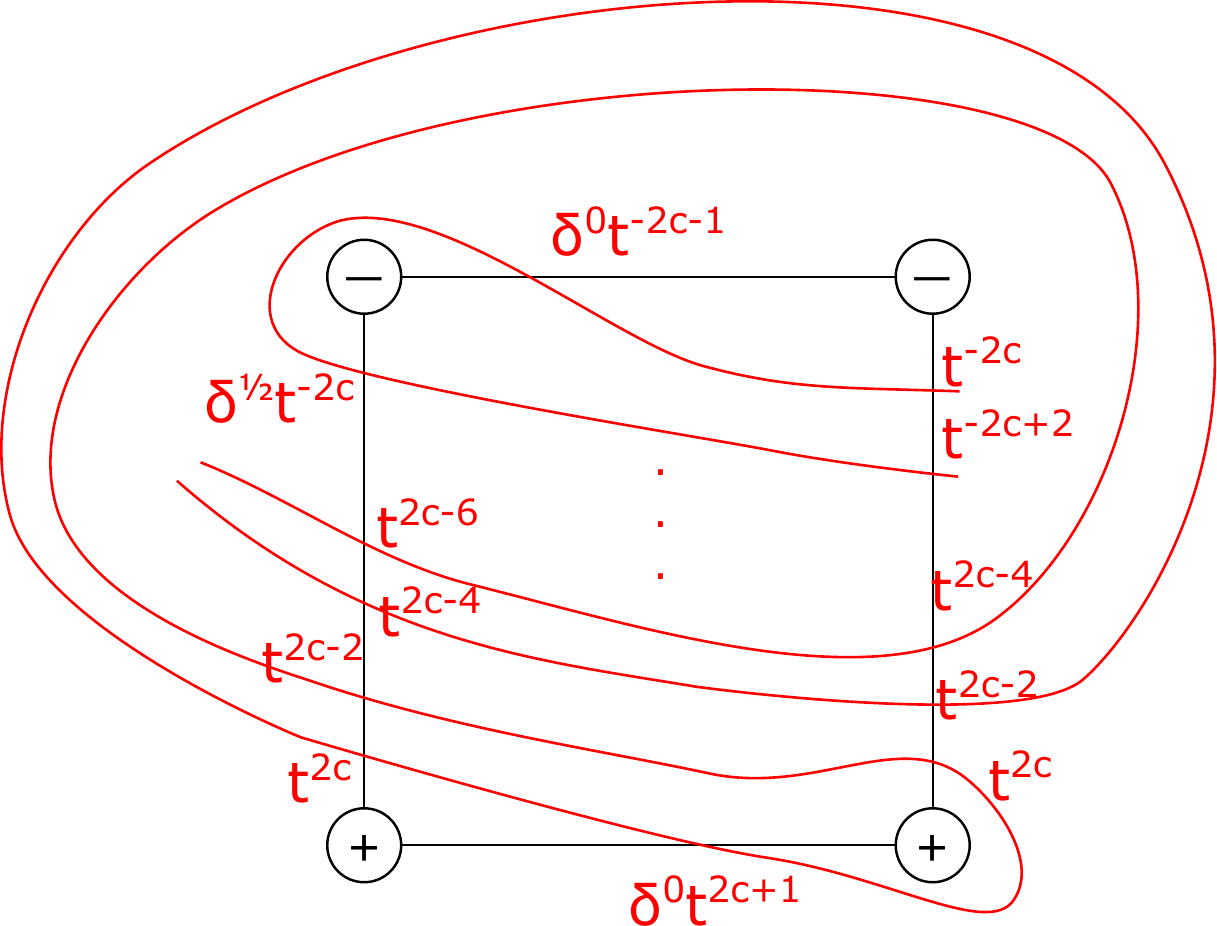}
\caption{The immersed curve associated to $r\left(-\frac{1}{2a+1}\right)t^{-2a-1}t^{2a+1}$}
\end{subfigure}
\caption{The curves corresponding to the $\pm \frac{1}{2a+1}$ rational tangles}
\label{fig:rat3}
\end{figure}

\begin{rmk}[A note on gradings]
The Alexander and delta gradings on immersed curves are \textit{a priori} relative.  For the Alexander gradings, symmetrizing them will result in them being symmetrized when computing $\widehat{HFK}$.  However, our delta grading will remain relative throughout this work; the absolute grading may be recovered by considering the Kaufmann state interpretation (see \cite{OSzBord}) as done by Eftekhary.  Furthermore, in the interest of space, the delta gradings of many of the intersection points in Figures \ref{fig:irr} -- \ref{fig:rat3} have been suppressed.  All intersection points on the left and right edges of the parametrizing square have the same delta grading, and so only the top-left intersection is labelled explicitly once.  Similarly if only one intersection point on the top or bottom edge has an explicit delta grading, it should be interpreted that all other intersection points along that edge have the same grading.
\end{rmk}

\section{The case $K=P(2a,-2b-1,-2c-1)$}
\begin{figure}
\centering
\includegraphics[width=.75\textwidth]{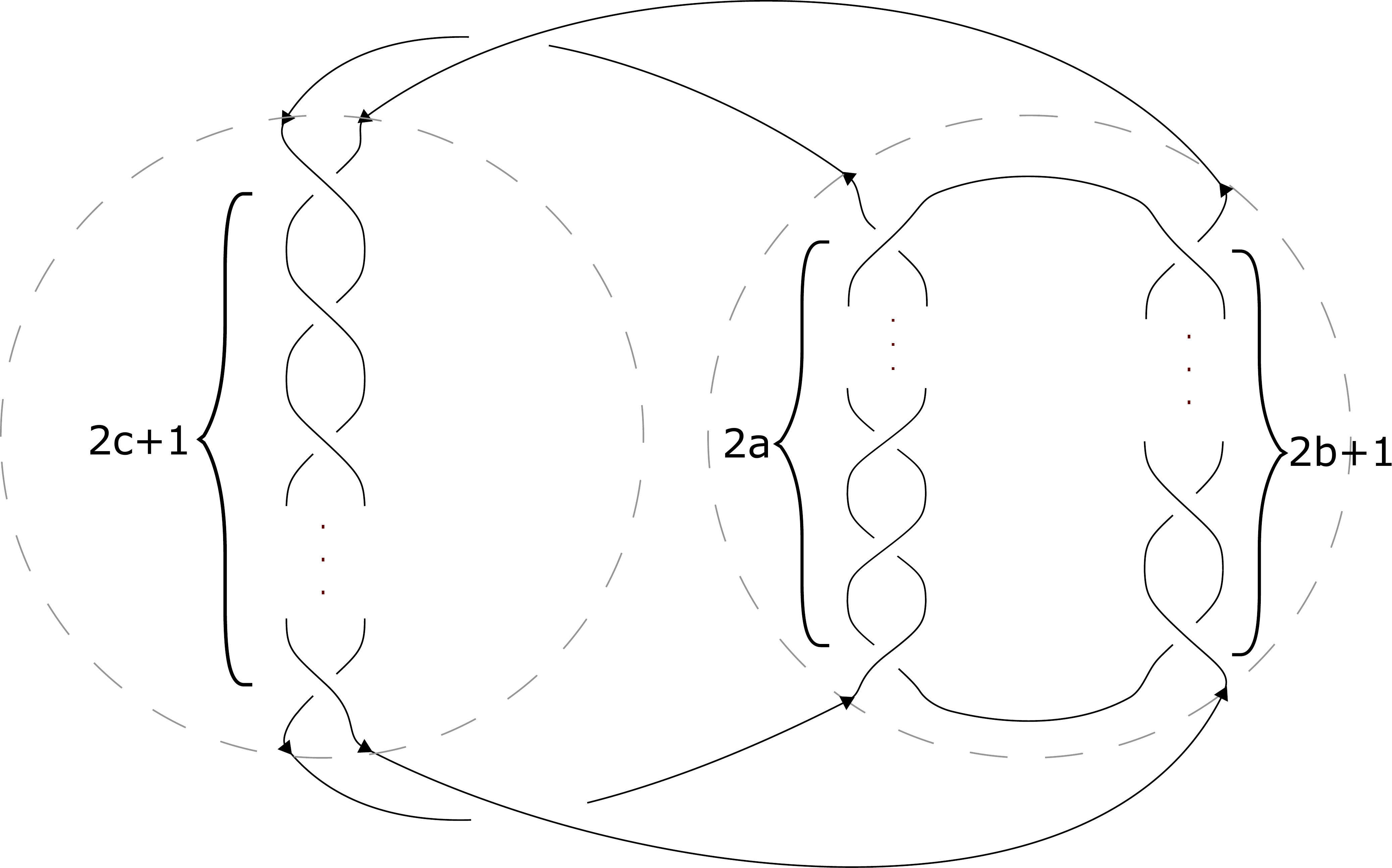}
\caption{The knot $P(2a,-2b-1,-2c-1)$ realized as the pairing of two tangles}
\label{fig:pair_N}
\end{figure}
In this section, we devote our attention to the case of the pretzel knot ${P(2a,-2b-1,-2c-1)}$.  As this corresponds to pairing the $-\frac{1}{2c+1}$ rational tangle with the ${(2a,-2b-1)}$-pretzel tangle (see Figure \ref{fig:pair_N}), we need to compute the pairing of the curves from the previous section with the rational curve $r\left(\frac{1}{2c+1}\right)t^{-2c-1}t^{2c+1}$.  Looking through Cases \hyperref[c1]{I}--\hyperref[c3]{III} above, this requires pairing with curves of the form:
\begin{itemize}
\item $i_k(1,4)t^mt^M$
\item $i_k(2,3)t^mt^M$
\item $r\left(-\frac{1}{2n}\right)t^{-2n}t^{2n}$
\item $r\left(\frac{1}{2n}\right)t^{m}t^{M}$
\item $r\left(-\frac{A}{B}\right)t^{-M}t^{M}$
\end{itemize}
Here $A=2(a-b)-1$ and $B=4b(a-b-1)+2a$.  Note also that $m$ and $M$ are not truly independent of each other, but considering them separately will help make the symmetry in the Alexander gradings more apparent.
\begin{figure}[h]
\includegraphics[width=\textwidth]{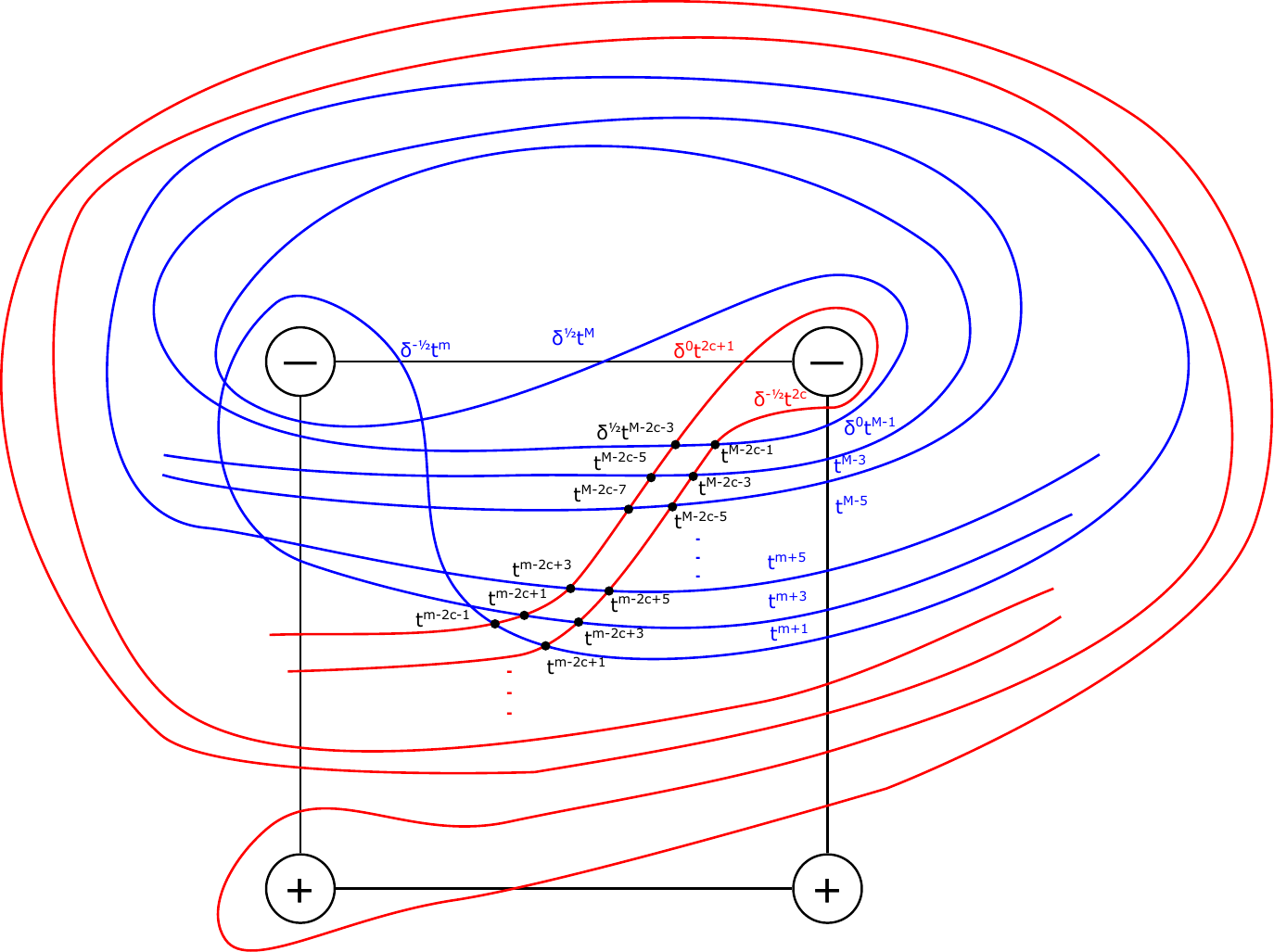}
\caption{The minimal intersection between the curves $i_k(1,4)t^mt^M$ and $r\left(\frac{1}{2c+1}\right)t^{-2c-1}t^{2c+1}$}
\label{fig:int_i14p}
\end{figure}

We first consider the pairing $HF\left(r\left(\frac{1}{2c+1}\right)t^{-2c-1}t^{2c+1},i_k(1,4)t^mt^M\right)$. As Figure \ref{fig:int_i14p} indicates, the curves may be isotoped so that all the intersections occur in the upper part of the diagram, with the red curve crossing through all the blue arcs, followed by ``rotations" which do not meet the blue curve at all. Using \eqref{eq:int_D}, one sees that the intersection points all have delta grading $\frac{1}{2}$.  Moreover, by \eqref{eq:int_A}, they come in pairs such that in each pair, the Alexander gradings differ by 2, and consecutive pairs differ by an overall Alexander grading shift by 2.  In particular, the pairs of gradings are:
\begin{align*}
(M-2c-3,M-2c-1), (M-2c-5,M-2c-3), (M-2c-7,M-2c-5),\dots \\
\dots,(m-2c+3,m-2c+5), (m-2c+1,m-2c+3), (m-2c-1,m-2c+1)
\end{align*}
In light of Theorem \ref{thm:pair}, we deduce that each such pair of generators corresponds to a single generator of $\widehat{HFK}(K)$ with Alexander grading equal to (by Remark \ref{rmk:half}, half of) the average of each pair.  This means that the Alexander gradings in the Knot Floer Homology are:
\begin{align*}
\frac{M}{2}-c-1, \frac{M}{2}-c-2, \frac{M}{2}-c-3,\dots,\frac{m}{2}-c+2, \frac{m}{2}-c+1, \frac{m}{2}-c
\end{align*}
For convenience, we denote the ``reduced" (that is, with each appropriate pair replaced by the corresponding $\widehat{HFK}$ generator and Alexander grading scaled appropriately) Floer homology of two immersed curves $\Gamma_1$ and $\Gamma_2$ by $\Gamma_1 \boxtimes \Gamma_2$.  Hence, we have shown:
\begin{equation}\label{eq:I14_1}
\begin{aligned}
r\left(\frac{1}{2c+1}\right)t^{-(2c+1)}t^{2c+1} \boxtimes i_k(1,4)t^mt^M \\
=\delta^{1/2}\{t^{m/2-c},t^{m/2-c+1},\dots,t^{M/2-c-1}\}
\end{aligned}
\end{equation}
Since all generators have the same delta grading, we save space by only writing $\delta^{1/2}$ once and putting it outside the curly brackets.

\begin{figure}[!h]
\includegraphics[width=\textwidth]{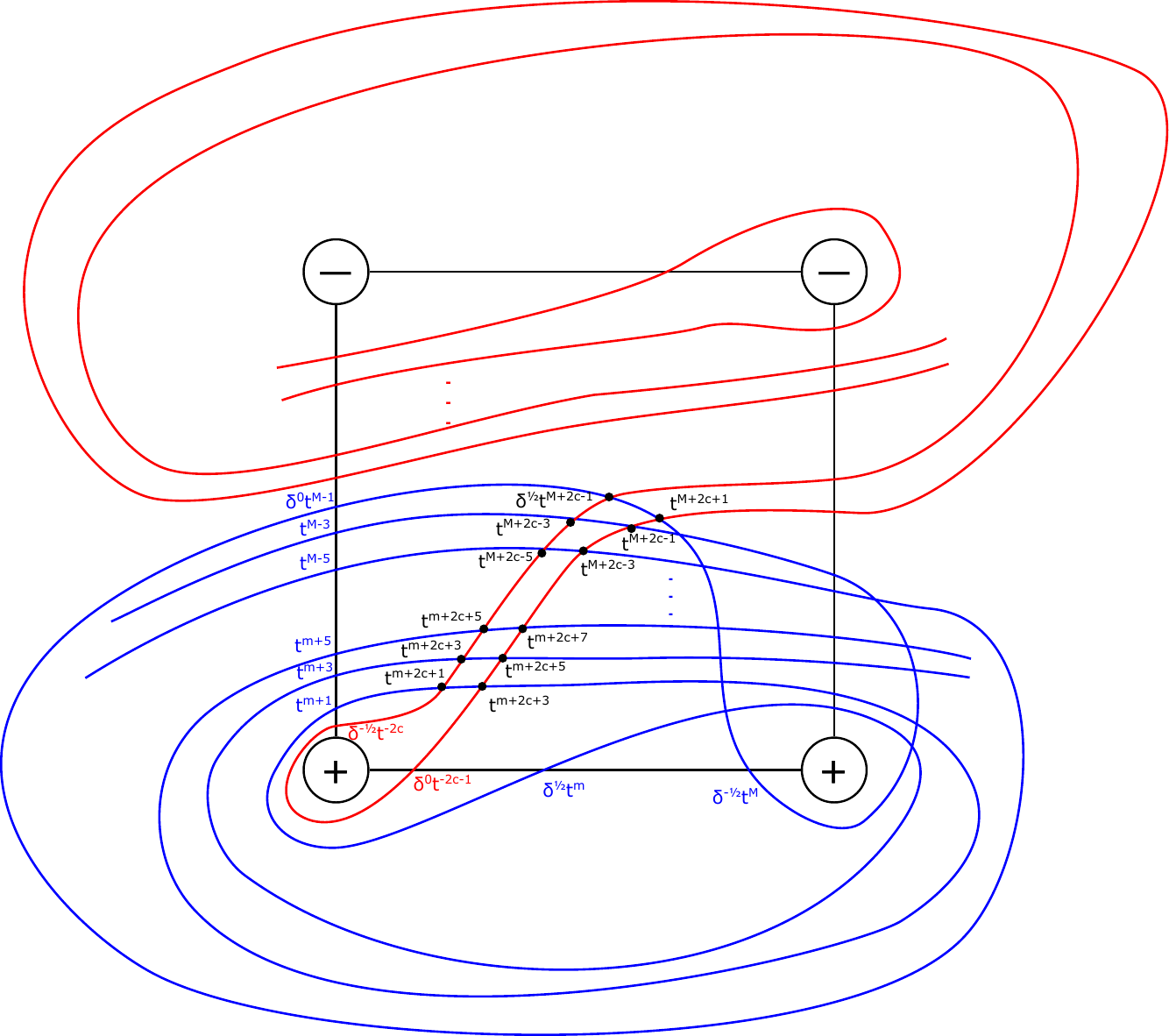}
\caption{The minimal intersection between the curves $i_k(2,3)t^mt^M$ and $r\left(\frac{1}{2c+1}\right)t^{-2c-1}t^{2c+1}$}
\label{fig:int_i23p}
\end{figure}
The pairing with $i_k(2,3)t^mt^M$ is similar to the previous pairing.  Once again, observing Figure \ref{fig:int_i23p} we see all generators with delta grading $\frac{1}{2}$ and the Alexander gradings arranged in consecutive pairs, each pair shifted by 2 from the previous.  In particular, the Alexander gradings are:
\begin{align*}
(M+2c+1,M-2c-1), (M+2c-1,M-2c-3), (M+2c-3,M-2c-5),\dots\\
\dots,(m+2c+7,m-2c+5), (m+2c+5,m-2c+3), (m+2c+3,m-2c+1)
\end{align*}
Hence, applying the ``reduction", we see that:
\begin{equation}\label{eq:I23_1}
\begin{aligned}
r\left(\frac{1}{2c+1}\right)t^{-(2c+1)}t^{2c+1} \boxtimes i_k(2,3)t^mt^M \\
=\delta^{1/2}\{t^{m/2+c+1},t^{m/2+c+2},\dots,t^{M/2+c}\}
\end{aligned}
\end{equation}

\begin{figure}[h]
\includegraphics[width=\textwidth]{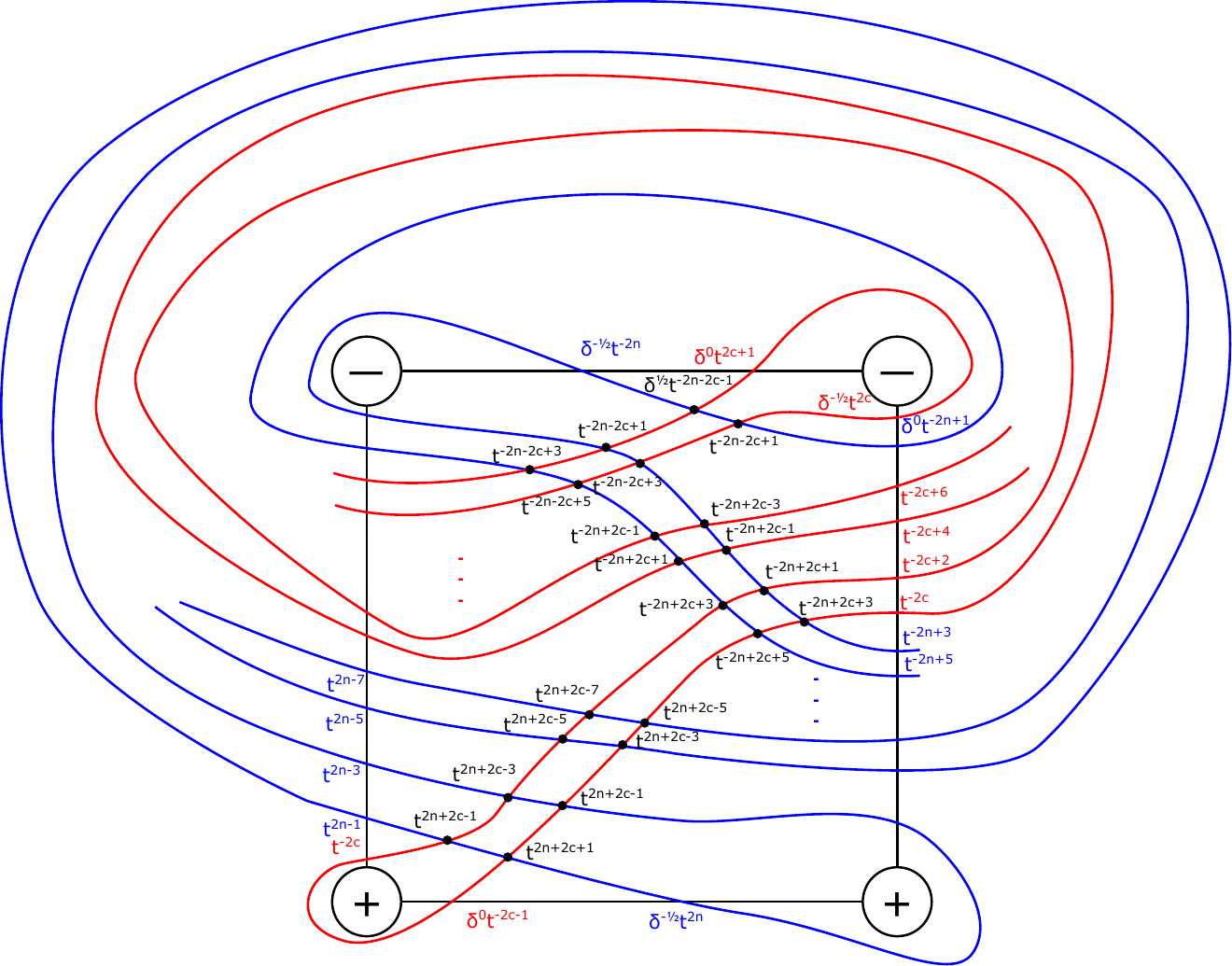}
\caption{The minimal intersection between the curves $r\left(-\frac{1}{2n}\right)t^{-2n}t^{2n}$ and $r\left(\frac{1}{2c+1}\right)t^{-2c-1}t^{2c+1}$}
\label{fig:int_rnp}
\end{figure}
We now consider the pairing with the rational curve $r\left(-\frac{1}{2n}\right)t^{-2n}t^{2n}$.  By Figure \ref{fig:int_rnp}, one sees once again all delta gradings $\frac{1}{2}$ as well as a pattern of consecutive pairs of Alexander gradings:
\begin{align*}
(-2n-2c-1,-2n-2c+1),(-2n-2c+1,-2n-2c+3),(-2n-2c+3,-2n-2c+5),\dots\\
\dots,(2n+2c-3,2n+2c-5),(2n+2c-1,2n+2c-3),(2n+2c+1,2n+2c-1)
\end{align*}
Hence we have:
\begin{equation}\label{eq:RN_1}
\begin{aligned}
r\left(\frac{1}{2c+1}\right)t^{-(2c+1)}t^{2c+1} \boxtimes r\left(-\frac{1}{2n}\right)t^{-2n}t^{2n} \\
=\delta^{1/2}\{t^{-n-c},t^{-n-c+1},\dots,t^{n+c}\}
\end{aligned}
\end{equation}
Alternatively, one can compute this by noticing that such a pairing corresponds to pairing the rational tangles $-\frac{1}{2c+1}$ and $-\frac{1}{2n}$.  This gives the $(2,-2(c+n)-1)$-torus knot, which is known to have Knot Floer homology concentrated in a single delta grading and a single generator in each Alexander grading from $-c-n$ up to $c+n$.

Moreover, we record the following useful result regarding pairing rational curves:
\begin{lem}\label{lem:rat_det}
Let $\frac{r_1}{s_1}$ and $\frac{r_2}{s_2}$ be two rational numbers (in simplest terms).  Then $HF\left(r\left(\frac{r_1}{s_1}\right),r\left(\frac{r_2}{s_2}\right)\right)$ has $\Delta = \left|\det\left(\begin{matrix}
r_1 & r_2\\
s_1 & s_2
\end{matrix}\right)\right|$ pairs of generators.  Equivalently, $r\left(\frac{r_1}{s_1}\right) \boxtimes r\left(\frac{r_2}{s_2}\right)$ has $\Delta$ generators.
\end{lem}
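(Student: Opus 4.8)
The plan is to reduce the statement to a computation of the minimal geometric intersection number of the two embedded curves $r(\lambda_1)$ and $r(\lambda_2)$ on $S^2_4$, where $\lambda_i = r_i/s_i$. As recalled in Section 2, when two immersed curves coming from tangle invariants are placed in minimal position their intersection points occur in bigon‑pairs (two points lying along parallel arcs, separated by a bigon covering a single puncture); so the rank of $r(\lambda_1)\boxtimes r(\lambda_2)$ — equivalently, the number of such pairs — is exactly half the number of intersection points. Hence it suffices to show that $r(\lambda_1)$ and $r(\lambda_2)$ have minimal intersection number $2\,|r_1s_2 - r_2s_1|$ on $S^2_4$. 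If $\lambda_1 = \lambda_2$ the two curves are disjoint parallel copies and the determinant vanishes, so the claim is trivially true; assume from now on that $\lambda_1\neq\lambda_2$.

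The crucial step would be to pass to the double cover $\rho\colon \mathbf{T}^2_4\to S^2_4$. By the definition of the rational curves, $r(\lambda_i) = \rho\circ\sigma(\ell_i)$ for a line $\ell_i\subset\mathbf{R}^2$ of slope $\lambda_i$ missing $\mathbf{Z}^2$, and since $\iota$ is the nontrivial deck transformation of $\rho$ we have $\rho^{-1}(r(\lambda_i)) = \sigma(\ell_i)\cup\sigma(\iota\ell_i)$. For a generic choice of $\ell_i$ these are two disjoint, distinct embedded closed curves, each carried homeomorphically onto $r(\lambda_i)$ by $\rho$; in the closed torus each is isotopic to a straight line of slope $\lambda_i$, hence represents a primitive class. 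Two primitive classes with reduced slopes $\lambda_1 = r_1/s_1$ and $\lambda_2 = r_2/s_2$ have minimal geometric intersection number $|r_1s_2 - r_2s_1|$ (the absolute value of the relevant $2\times 2$ determinant); removing the four punctures cannot lower this, and the straight‑line representatives realize it while avoiding the punctures, so the same count is realized on $\mathbf{T}^2_4$.

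With these in hand, I would take the straight‑line representatives of all four components of $\rho^{-1}(r(\lambda_1))$ and $\rho^{-1}(r(\lambda_2))$ simultaneously. No two of them cobound a bigon: a bigon would lift to $\mathbf{R}^2$ and bound a disk between two distinct straight lines, which is impossible. By the bigon criterion this $\iota$‑invariant configuration is in minimal position, with $4\,|r_1s_2 - r_2s_1|$ intersection points, and it descends to a configuration of $r(\lambda_1)$ and $r(\lambda_2)$ on $S^2_4$ with $2\,|r_1s_2 - r_2s_1|$ points; this downstairs configuration is again minimal, since a downstairs bigon would lift to an upstairs one. Conversely, any configuration of $r(\lambda_1)$ and $r(\lambda_2)$ on $S^2_4$ with $N$ intersection points has an $\iota$‑invariant preimage upstairs which is a transverse configuration of the two (multi)curves with $2N$ points, hence $2N\geq 4\,|r_1s_2 - r_2s_1|$ and $N\geq 2\,|r_1s_2 - r_2s_1|$. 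Therefore the minimal intersection number of $r(\lambda_1)$ and $r(\lambda_2)$ on $S^2_4$ is exactly $2\,|r_1s_2 - r_2s_1|$, so $HF$ has that many generators, arranged in $|r_1s_2 - r_2s_1|$ bigon‑pairs; equivalently $r(\lambda_1)\boxtimes r(\lambda_2)$ has $\left|\det\begin{pmatrix} r_1 & r_2 \\ s_1 & s_2 \end{pmatrix}\right|$ generators.

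The main obstacle I anticipate is the interplay between minimal position and the covering: one must confirm both that the preimage of a downstairs minimal configuration stays minimal (giving the lower bound $N\geq 2|r_1s_2-r_2s_1|$) and that an upstairs minimal configuration can be chosen $\iota$‑equivariantly so as to descend; both follow cleanly from the observation that bigons lift, combined with the $\iota$‑symmetry and puncture‑avoidance of the straight‑line representatives, but this is the step that needs to be stated carefully. (One could instead bypass the covering‑space argument: the pairing of the rational tangles of slopes $\lambda_1$ and $\lambda_2$ is a $2$‑bridge link of determinant $|r_1s_2 - r_2s_1|$, and since $2$‑bridge links are alternating their knot Floer homology is thin of total rank equal to the determinant, which via Theorem \ref{thm:pair} again yields $|r_1s_2 - r_2s_1|$ bigon‑pairs — but matching that determinant to the stated matrix requires care with the mirroring and gluing conventions, which is why I would prefer the direct computation above.)
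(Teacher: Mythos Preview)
Your argument is correct and follows essentially the same route as the paper: lift to the double cover $\rho\colon \mathbf{T}^2_4\to S^2_4$, note that each $r(\lambda_i)$ lifts to two disjoint curves of slope $\lambda_i$, use that two primitive torus curves of slopes $r_1/s_1$ and $r_2/s_2$ meet minimally in $|r_1s_2-r_2s_1|$ points, and then halve the resulting $4\Delta$ upstairs intersections under the double cover to get $2\Delta$ points, i.e.\ $\Delta$ pairs. Your version is in fact more careful than the paper's, which asserts the intersection count without explicitly invoking the bigon criterion or discussing why minimal position is compatible with the covering; your attention to that point (and the alternative $2$-bridge argument) is well placed but not strictly needed here.
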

\begin{proof}
Recalling the double cover $\rho:T^2_4 \rightarrow S^2_4$ and the definition of rational curves, we know that each rational curve $r\left(\frac{r_i}{s_i}\right)$ lifts to two curves $\gamma^1_i$ and $\gamma^2_i$ on the 4-punctured torus, each with slope $\frac{r_i}{s_i}$.  Now the (minimal) intersection number of $\gamma^j_1$ and $\gamma^k_2$ is $\Delta$, whereas the curves $\gamma^1_i$ and $\gamma^2_i$ are disjoint, being separate lifts of the same curve in $S^2_4$.  Hence, there are $4\Delta$ total points of intersection among all the $\gamma^j_i$ curves.  Since $\rho$ is a double cover, this means that there are $2\Delta$ intersection points between $r\left(\frac{r_1}{s_1}\right)$ and $r\left(\frac{r_2}{s_2}\right)$ (in minimal-intersection position); that is, $\Delta$ pairs.
\end{proof}
The above Lemma is consistent with our calculation for this case, since $\det\left(\begin{matrix}
1 & -1\\
2c+1 & 2n
\end{matrix}\right) = 2(n+c)+1$.

Another useful fact about pairing two rational curves is the following:
\begin{lem}\label{lem:rat_delta}
If $\lambda_1$ and $\lambda_2$ are (extended) rational numbers. Then ${HF(r(\lambda_1),r(\lambda_2))}$ is supported in a single delta-grading.
\end{lem}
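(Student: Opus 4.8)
The plan is to reduce the statement to the well-known fact that alternating links have thin knot Floer homology, using the pairing theorem. Given $\lambda_1,\lambda_2\in\mathbf{Q}\cup\{\infty\}$, choose rational tangles $T_1,T_2$ with $HFT(\mathrm{mr}(T_1))=r(\lambda_1)$ and $HFT(T_2)=r(\lambda_2)$; this is possible because $HFT$ of a rational tangle is exactly the corresponding rational curve and the mirror of a rational tangle is again a rational tangle. If $\lambda_1=\lambda_2$ the two curves are isotopic and can be isotoped to be disjoint, so $HF(r(\lambda_1),r(\lambda_2))=0$ and there is nothing to prove; hence assume $\lambda_1\ne\lambda_2$. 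The pairing of $T_1$ and $T_2$ (gluing two rational tangles along all four endpoints) is then a $2$-bridge link $L$, and every $2$-bridge link is alternating.

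Next I would invoke Theorem \ref{thm:pair}. When $L$ is a knot — which, for a suitable choice of compatible orientations on $T_1,T_2$, happens exactly when $\bigl|\det\left(\begin{smallmatrix} r_1 & r_2\\ s_1 & s_2\end{smallmatrix}\right)\bigr|$ is odd, $\lambda_i=r_i/s_i$ — Theorem \ref{thm:pair} gives $HF(r(\lambda_1),r(\lambda_2))\cong \widehat{HFK}(L)\otimes V$ up to an overall shift of the delta grading. Since $L$ is alternating, $\widehat{HFK}(L)$ is supported in a single delta grading (Ozsv\'{a}th--Szab\'{o}), and $V$ is supported in delta grading $0$, so the tensor product, and therefore $HF(r(\lambda_1),r(\lambda_2))$, is supported in a single delta grading. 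When instead $L$ is a $2$-component link, one argues identically using the link version of the pairing theorem from \cite{Zib} together with the thinness of $\widehat{HFK}$ (equivalently $\widehat{HFL}$) of alternating links.

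The work here is almost entirely bookkeeping rather than a genuine obstacle: one must check that rational tangles can always be oriented so that Theorem \ref{thm:pair} applies, handle the degenerate case and the knot/link dichotomy uniformly, and make sure that the chain ``$2$-bridge $\Rightarrow$ alternating $\Rightarrow$ thin'' is being used with the (relative) normalization of the delta grading adopted in this paper, which only affects an overall shift. As an alternative, one could give a self-contained argument by lifting both curves to the $4$-punctured torus $\mathbf{T}^2_4$, where each $r(\lambda_i)$ becomes a pair of straight lines of slope $\lambda_i$; after using the mapping class group action to normalize one slope (say to $0$ or $\infty$), any two intersection points are joined by a disk whose set of covered punctures, read off against \eqref{eq:int_D}, forces the two delta gradings to agree. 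I expect the alternating-link route to be shorter and to reuse machinery already set up in the paper, so I would present that one and relegate the direct computation to a remark.
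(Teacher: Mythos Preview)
Your proposal is correct and follows essentially the same route as the paper: interpret the pairing of the two rational curves via Theorem~\ref{thm:pair} (and its link version) as the Floer homology of a $2$-bridge link, and then invoke thinness. The paper's argument is terser---it simply cites that $2$-bridge links have Link Floer homology supported in a single delta grading---whereas you spell out the intermediate step ``$2$-bridge $\Rightarrow$ alternating $\Rightarrow$ thin'' and handle the degenerate case $\lambda_1=\lambda_2$ and the knot/link dichotomy more explicitly; your alternative torus-lift argument is a genuine extra and does not appear in the paper.
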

\begin{proof}
Since this is the pairing of two rational curves, it may, as we have seen, be interpreted (up to grading shifts) as the Knot Floer homology of the pairing of two rational tangles (actually, this will result in a link in general, so we need to consider Link Floer homology; there is an analogue of Theorem \ref{thm:pair} for links as well; see Theorem 5.9 of \cite{Zib}).  This will be a 2-bridge link, which is known to have Link Floer homology supported in a single delta-grading.
\end{proof}

\begin{figure}[h]
\centering
\begin{subfigure}{\textwidth}
\centering
\includegraphics[width=\textwidth]{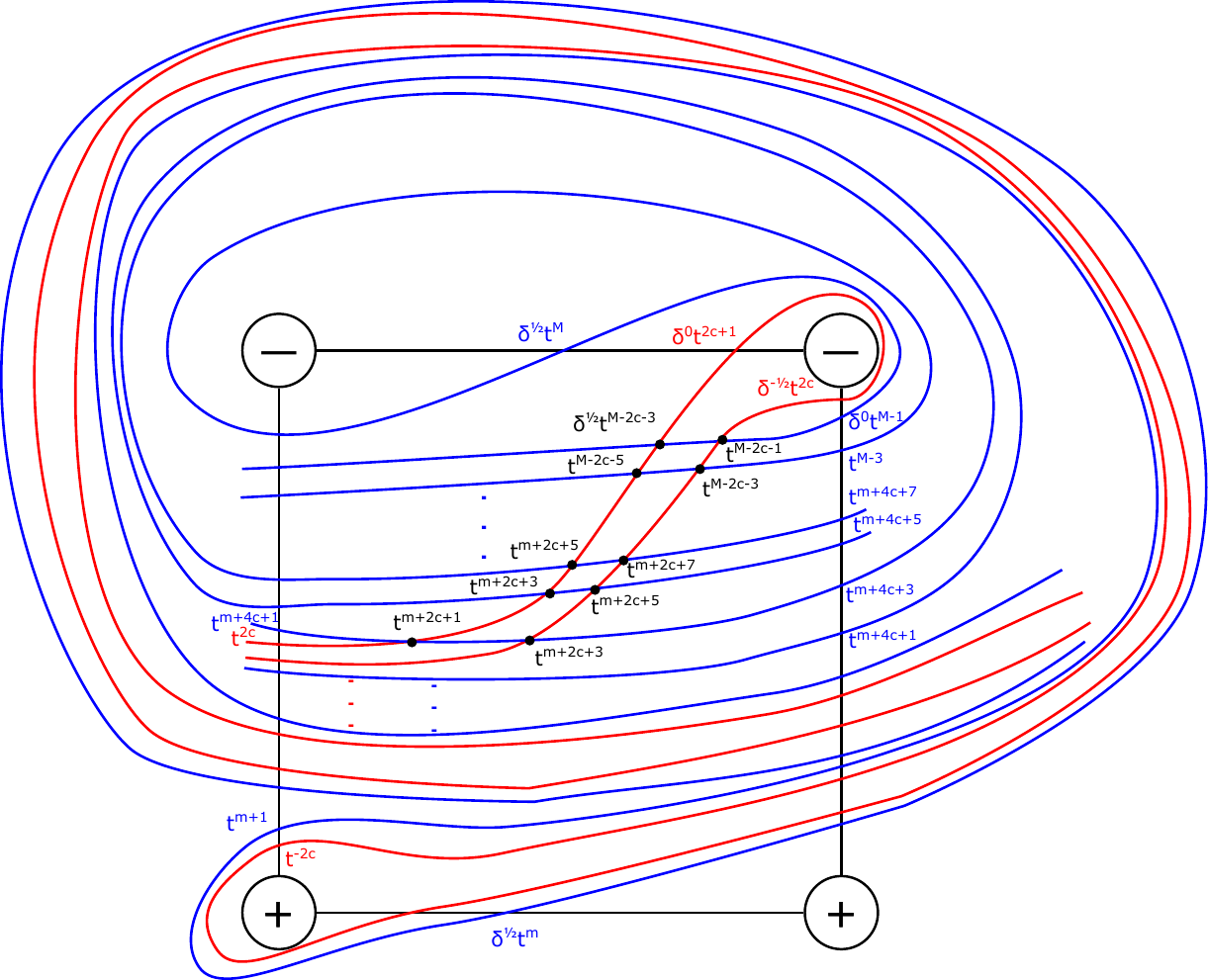}
\caption{The case where $n>c$}
\label{fig:int_rpp1}
\end{subfigure}
\caption{The minimal intersection between the curves $r\left(\frac{1}{2n}\right)t^{m}t^{M}$ and $r\left(\frac{1}{2c+1}\right)t^{-2c-1}t^{2c+1}$; the two cases}
\label{fig:int_rpp}
\end{figure}
\begin{figure}\ContinuedFloat
\begin{subfigure}{\textwidth}
\includegraphics[width=\textwidth]{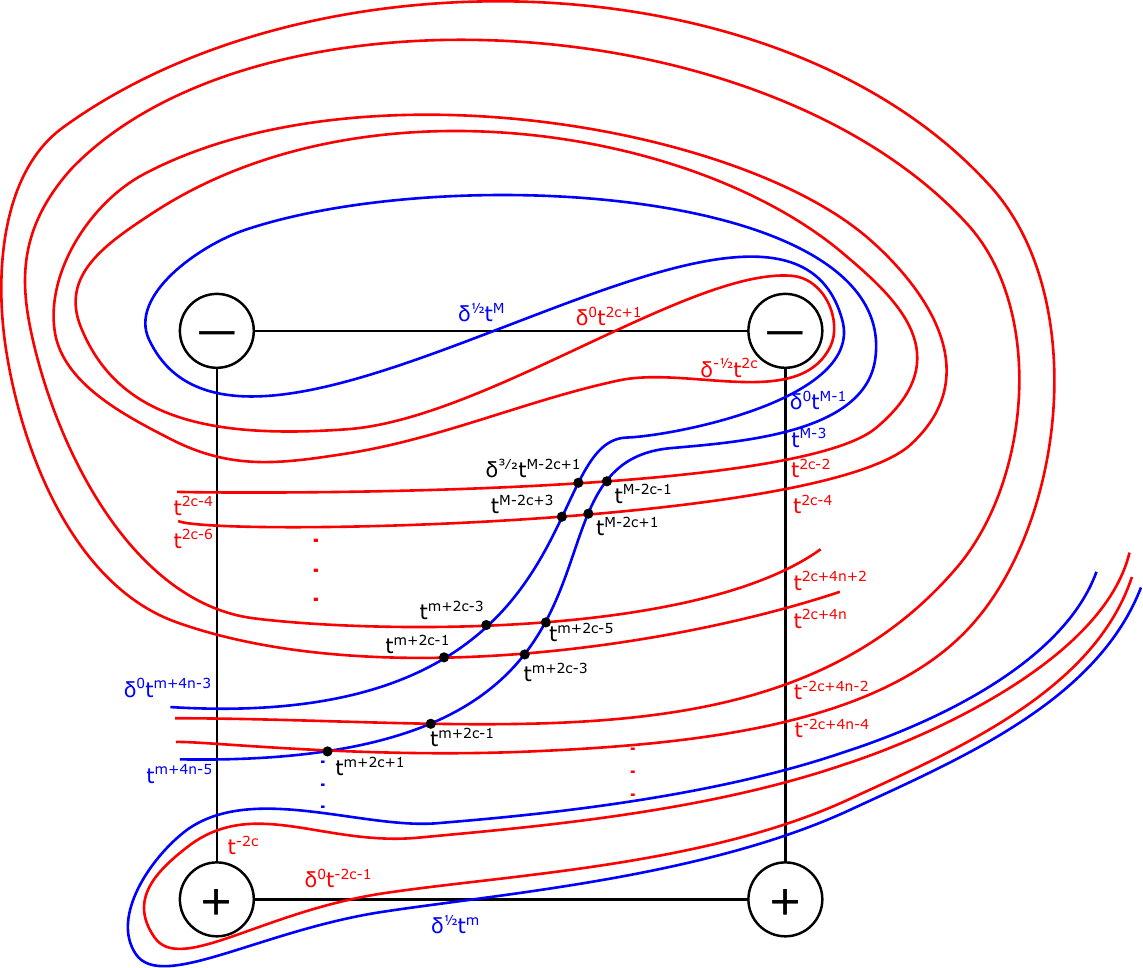}
\caption{The case where $n\leq c$}
\label{fig:int_rpp2}
\end{subfigure}
\end{figure}
Now let us consider the pairing with the curve $r\left(\frac{1}{2n}\right)t^{m}t^{M}$.  In this case, the rational curves both have positive slope, and as shown in Figure \ref{fig:int_rpp}, there are two different subcases to consider.  In one case, if $\frac{1}{2n}<\frac{1}{2c+1}$ (or, equivalently, $n>c$), we can put the curves in the following configuration: the curve $r\left(\frac{1}{2c+1}\right)$ (the red curve in the figure) follows the curve $r\left(\frac{1}{2n}\right)$ (the blue curve) as shown, until all its ``rotations" have finished, and then it crosses the remaining blue arcs to reach the top-right corner. The fact that the red curve finishes its $c$ ``rotations" no later than the blue finishes its $n-1$ ``rotations" is guaranteed by the condition that $n>c$; also notice that each ``rotation" results in an an increase by 4 in the Alexander grading of intersections between the curves and the parametrizing square.  Hence, we see from Figure \ref{fig:int_rpp1} that the intersection points will all have delta-grading $\frac{1}{2}$ (in light of Lemma \ref{lem:rat_delta}, this only needs to be checked for one point) and will have consecutive pairs of Alexander gradings:
\begin{align*}
(m+2c+1,m+2c+3),(m+2c+3,m+2c+5),(m+2c+5,m+2c+7),\dots\\
\dots,(M-2c-5,M-2c-3),(M-2c-3,M-2c-1)
\end{align*}

In the second case, we see, as in Figure \ref{fig:int_rpp2}, that we can isotope the curves such that the blue curve this time follows the red one until its ``rotations" are complete, and then crosses over the remaining red arcs before looping around to the top-left puncture.  From the figure, one sees that all the intersection points have delta-grading $\frac{3}{2}$ and have Alexander gradings once again arranged in consecutive pairs:
\begin{align*}
(m+2c+1,m+2c-1),(m+2c-1,m+2c-3),(m+2c-3,m+2c-5),\dots\\
\dots,(M-2c+3,M-2c+1),(M-2c+1,M-2c-1)
\end{align*}

Therefore, applying the reduction to the results of the two subcases, we have:
\begin{equation}\label{eq:RP_1}
\begin{aligned}
r\left(\frac{1}{2c+1}\right)t^{-(2c+1)}t^{2c+1} \boxtimes r\left(\frac{1}{2n}\right)t^{m}t^{M} \\
=\begin{cases}
\delta^{1/2}\{t^{m/2+c+1},t^{m/2+c+2},\dots,t^{M/2-c-1}\} & \textrm{ if }n>c \\
\delta^{3/2}\{t^{M/2-c},t^{M/2-c+1},\dots,t^{m/2+c}\} & \textrm{ if }n\leq c
\end{cases}
\end{aligned}
\end{equation}
Once again, this can be seen as corresponding to the Knot Floer homology of the $(2,2n-2c-1)$-torus knot (up to an overall shift in Alexander grading).

\begin{figure}[p]
\includegraphics[width=\textwidth]{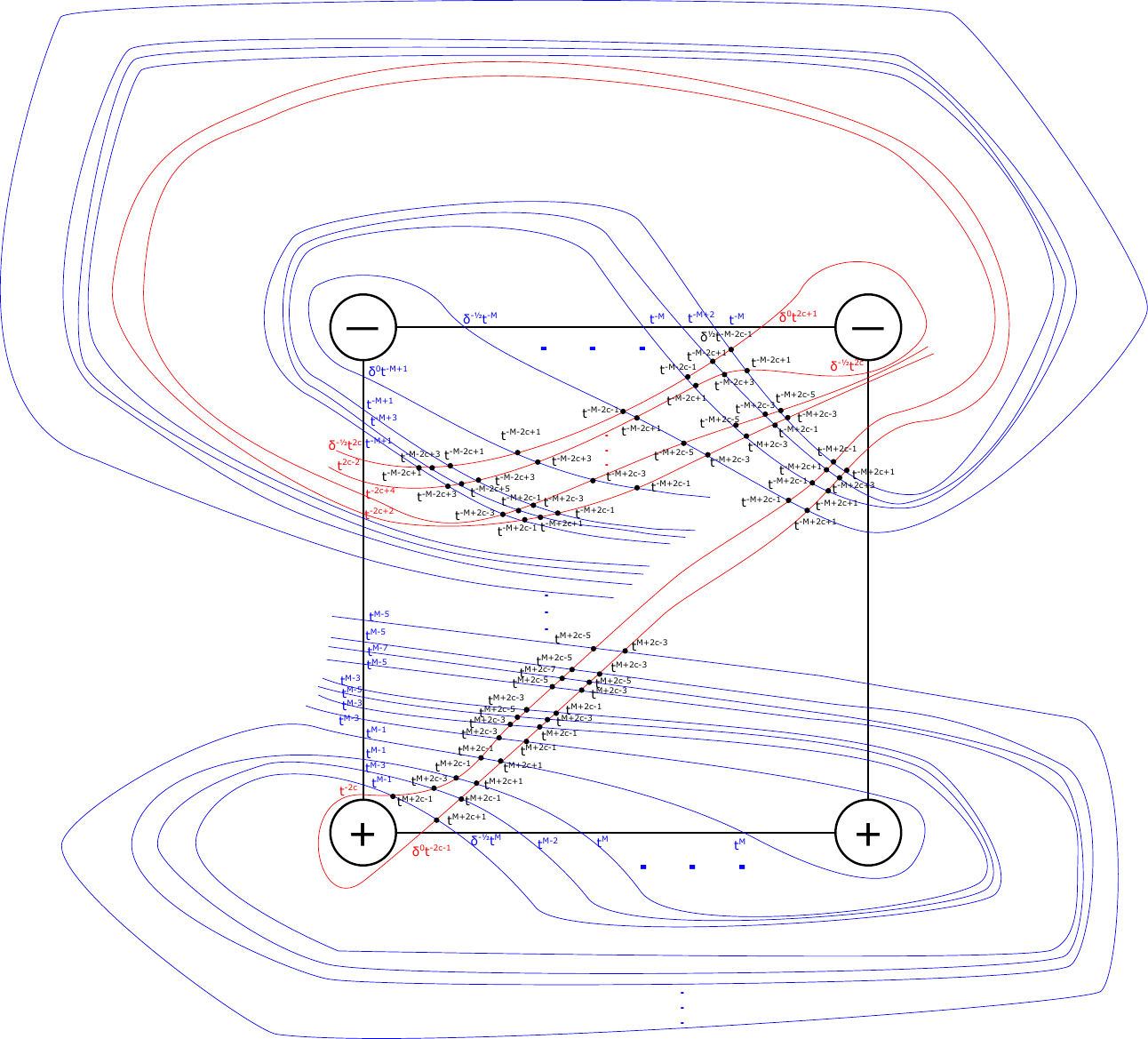}
\caption{The minimal intersection between the curves $r\left(-\frac{A}{B}\right)t^{-M}t^{M}$ and $r\left(\frac{1}{2c+1}\right)t^{-2c-1}t^{2c+1}$}
\label{fig:int_rgp}
\end{figure}
We now turn our attention to the pairing with the more general rational curve $r\left(-\frac{A}{B}\right)t^{-M}t^M$, where, by Case \hyperref[c3]{III}, we have $A=2(a-b)-1$, $B=4b(a-b-1)+2a=A(2b+1)+1$, and $M=2b+2$.  The curves can be isotoped into the following configuration: the red curve $r\left(\frac{1}{2c+1}\right)$, starting at the top-right puncture, it completes all of its ``rotations" by repeatedly crossing only the top $2A$ blue arcs in ``inside" of the parametrizing square; Then, it crosses the rest of the arcs to reach the bottom-left puncture (see Figure \ref{fig:int_rgp}).  From this, we see that all intersection points have delta-grading $\frac{1}{2}$ (again, by Lemma \ref{lem:rat_delta}, it suffices to check this for one intersection point).  Next we observe whenever the red curve crosses a ``group" of $A$ arcs, the pairs of Alexander gradings will alternate in the following fashion:
\begin{align*}
(\alpha-1,\alpha+1),(\alpha+1,\alpha+3),(\alpha-1,\alpha+1),(\alpha+1,\alpha+3),(\alpha-1,\alpha+1)...\\
...(\alpha-1,\alpha+1),(\alpha+1,\alpha+3),(\alpha-1,\alpha+1)
\end{align*}
Moreover, between two consecutive such ``groups", there is an overall grading shift of 2.  By Lemma \ref{lem:rat_det}, there will be $L = B+A(2c+1)$ total pairs.  Applying the reduction gives:
\begin{equation}\label{eq:RG_1}
\begin{aligned}
& r\left(\frac{1}{2c+1}\right)t^{-(2c+1)}t^{2c+1} \boxtimes r\left(-\frac{A}{B}\right)t^{-M}t^{M} \\
& =\delta^{1/2} \\
& \left.
\begin{aligned}
& \left.
\begin{aligned}
& t^{-M/2-c}\\
& t^{-M/2-c+1}\\
& t^{-M/2-c}\\
& \vdots\\
& t^{-M/2-c}
\end{aligned}\right\rbrace A \\
&\left.
\begin{aligned}
& t^{-M/2-c+1}\\
& t^{-M/2-c+2}\\
& t^{-M/2-c+1}\\
& \vdots\\
& t^{-M/2-c+1}
\end{aligned}\right\rbrace A \\
& \left.
\begin{aligned}
& \vdots \\
& t^{M/2+c}\\
& t^{M/2+c-1}\\
& t^{M/2+c}
\end{aligned}\right.
\end{aligned}\right\rbrace L
\end{aligned}
\end{equation}
Notice that $L$ is not a multiple of $A$, so that \eqref{eq:RG_1} is not written in a symmetric way.  Nevertheless, it is symmetric in the sense that if each generator $t^{\alpha}$ was replaced with $t^{-\alpha}$, the overall set in \eqref{eq:RG_1} would remain unchanged. This can be seen geometrically by considering an ``upside-down" version of Figure \ref{fig:int_rgp}, in which the ``rotation" part of the red curve occurs near the bottom rather than the top.  Then the analogous computation yields \eqref{eq:RG_1} except that the Alexander gradings all have opposite signs.

We are now ready to show the main result for this family of knots (confirming Theorem 1 of \cite{Eft}).

\begin{thm}\label{thm1}
If $K=P(2a,-2b-1,-2c-1)$ with $a,b,c>0$ then for each Alexander grading $s$, $\widehat{HFK}(K,s)$ is supported in at most one delta (equivalently Maslov) grading.
\end{thm}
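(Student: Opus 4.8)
The plan is to read $\widehat{HFK}(K)$ directly off the pairing computations already carried out and then perform a short bookkeeping check on delta gradings, organized by the three cases \hyperref[c1]{I}--\hyperref[c3]{III} for $HFT$ of the $(2a,-2b-1)$-pretzel tangle. The structural input is that $HFT$ of a $4$-ended tangle is a disjoint union of immersed curves and that (Lagrangian) Floer homology distributes over disjoint unions; combined with Theorem \ref{thm:pair} applied to the pairing of Figure \ref{fig:pair_N} (the $(2a,-2b-1)$-pretzel tangle with the $-\frac{1}{2c+1}$ rational tangle, whose mirror has associated curve $r(\frac{1}{2c+1})$), this gives
\[
\widehat{HFK}(K) \;\cong\; \bigoplus_{\Gamma} \left( r\left(\tfrac{1}{2c+1}\right)t^{-(2c+1)}t^{2c+1} \boxtimes \Gamma \right)
\]
up to an overall delta-grading shift, where $\Gamma$ runs over the curves listed in whichever of Cases \hyperref[c1]{I}--\hyperref[c3]{III} applies and each summand has already been identified in \eqref{eq:I14_1}--\eqref{eq:RG_1}. (Working with the reduced invariant $\boxtimes$ loses no information here, since tensoring with $V$ only shifts Alexander gradings and preserves delta gradings.) It therefore suffices to check, in each case, that no single Alexander grading receives contributions lying in two distinct delta gradings.

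Cases \hyperref[c2]{II} and \hyperref[c3]{III} are immediate: the curves $\Gamma$ appearing there are only of types $i_k(1,4)$, $i_k(2,3)$, and one rational curve of negative slope ($r(-\frac{1}{2a})$ in Case \hyperref[c2]{II}, $r(-\frac{A}{B})$ in Case \hyperref[c3]{III}), and by \eqref{eq:I14_1}, \eqref{eq:I23_1}, \eqref{eq:RN_1}, \eqref{eq:RG_1} every resulting summand is concentrated in delta grading $\tfrac12$; hence $\widehat{HFK}(K)$ is homologically thin and the claim is trivial. The same applies to Case \hyperref[c1]{I} when $a>c$: the rational curve there is $r(\frac{1}{2a})$, so $n=a>c$ and the first branch of \eqref{eq:RP_1} again puts all of its summands in delta grading $\tfrac12$. (In Case \hyperref[c1]{I} one has $a\le b$, so ``$a>c$'' is precisely the thin hypothesis ``$a>b$ or $a>c$'' for this case, while Cases \hyperref[c2]{II} and \hyperref[c3]{III} already have $a>b$.)

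The one remaining possibility, Case \hyperref[c1]{I} with $a\le c$ (equivalently $a\le b$ and $a\le c$), is the heart of the matter. Here the copies of $r(\frac{1}{2a})$ pair with $r(\frac{1}{2c+1})$ through the \emph{second} branch of \eqref{eq:RP_1}, contributing generators in delta grading $\tfrac32$, whereas the $i_k(1,4)$ and $i_k(2,3)$ summands remain in delta grading $\tfrac12$. I would finish by reading the extreme Alexander gradings off the grading data of Case \hyperref[c1]{I}: each such copy is $r(\frac{1}{2a})t^mt^M$ with $M=m+4a$ and $-2b\le m\le 2b-4a$, so by \eqref{eq:RP_1} every delta-$\tfrac32$ generator has Alexander grading in the interval $[2a-b-c,\,b-2a+c]$ (nonempty since $2a\le b+c$). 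On the other hand, the largest $M$ among the $i_k(1,4)$ curves is $4a-2b-2$ (attained by $i_a(1,4)t^{-2b-2}t^{4a-2b-2}$), so by \eqref{eq:I14_1} every delta-$\tfrac12$ generator coming from an $i_k(1,4)$ curve has Alexander grading at most $2a-b-c-2$; symmetrically, the smallest $m$ among the $i_k(2,3)$ curves is $2b-4a+2$, so by \eqref{eq:I23_1} every delta-$\tfrac12$ generator coming from an $i_k(2,3)$ curve has Alexander grading at least $b-2a+c+2$. Since $2a-b-c-2<2a-b-c$ and $b-2a+c+2>b-2a+c$, the delta-$\tfrac12$ generators are separated from the delta-$\tfrac32$ interval by a gap of $2$ on each side, so no Alexander grading supports both delta gradings. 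This proves the theorem and at the same time yields the structural description announced in the introduction (two consecutive delta gradings $\delta_1<\delta_2$, with the $\delta_1$-generators at least two Alexander steps outside the range of the $\delta_2$-generators). The only real difficulty is combinatorial: correctly tracking the maximal and minimal Alexander gradings across all copies of each curve type in Case \hyperref[c1]{I}; beyond that, everything is a direct appeal to the pairing formulas already established.
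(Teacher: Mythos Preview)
Your proof is correct and follows essentially the same route as the paper: you invoke the same pairing identities \eqref{eq:I14_1}--\eqref{eq:RG_1} to conclude thinness in Cases \hyperref[c2]{II}, \hyperref[c3]{III}, and Case \hyperref[c1]{I} with $a>c$, and in the remaining subcase you carry out the same extremal-grading bookkeeping (delta-$\tfrac32$ generators confined to $[2a-b-c,\,b-2a+c]$, delta-$\tfrac12$ generators bounded above by $2a-b-c-2$ from the $i_k(1,4)$ curves and below by $b-2a+c+2$ from the $i_k(2,3)$ curves) to rule out overlap. The argument and the numerical bounds match the paper's proof exactly.
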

\begin{proof}
By \eqref{eq:I14_1}, \eqref{eq:I23_1}, \eqref{eq:RN_1}, and \eqref{eq:RG_1}, we see that if we are in Cases \hyperref[c2]{II} or \hyperref[c3]{III}, all of $\widehat{HFK}(K)$ is supported in a single delta grading.  Hence, we consider Case \hyperref[c1]{I} in which $a\leq b$.  Once again, we notice that if $a>c$, the homology is thin and so we turn our attention to the situation where $a \leq c$.  Now $\widehat{HFK}(K)$ is supported in two delta gradings.  \eqref{eq:RP_1} gives us that for $\delta=\frac{3}{2}$, the Alexander gradings of the nonzero elements of $\widehat{HFK}(K)$ lie between $-b+2a-c$ and $b-2a+c$.  On the other hand, by \eqref{eq:I14_1} and \eqref{eq:I23_1}, the Alexander gradings of elements with $\delta = \frac{1}{2}$ lie between $-b-1-c$ and $-b+2a-c-2$ or between $b-2a+c+2$ and $b+1+c$.  Therefore, even in this case, there is no Alexander grading with nonzero elements belonging to more than one delta grading.
\end{proof}

\begin{cor}
For $K=P(2a,-2b-1,-2c-1)$, $\widehat{HFK}(K)$ is determined by the Alexander polynomial of $K$.
\end{cor}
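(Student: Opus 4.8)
The plan is to read this off directly from Theorem \ref{thm1} together with the graded Euler characteristic relation recalled in Section 2. For any knot $K$,
\[
\Delta_K(t) = \sum_{s,\delta}(-1)^{s-\delta}\,\mathrm{rk}\,\widehat{HFK}_\delta(K,s)\,t^s ,
\]
so the coefficient of $t^s$ is an alternating sum over $\delta$ of the ranks $\mathrm{rk}\,\widehat{HFK}_\delta(K,s)$. The first step is to invoke Theorem \ref{thm1}: for each fixed $s$ at most one delta grading — call it $\delta(s)$ when $\widehat{HFK}(K,s)\neq 0$ — contributes, so the sum collapses to a single term and the absolute value of the coefficient of $t^s$ in $\Delta_K(t)$ equals $\mathrm{rk}\,\widehat{HFK}_{\delta(s)}(K,s) = \mathrm{rk}\,\widehat{HFK}(K,s)$. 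This is exactly the observation made after the Euler characteristic formula in Section 2, and it shows the total rank in each Alexander grading is recovered from $\Delta_K$.

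The second step is to recover the bigrading, not merely the ranks, up to the overall shift inherent in working with relative delta gradings. For each $s$ with $\widehat{HFK}(K,s)\neq 0$, the sign of the coefficient of $t^s$ in $\Delta_K(t)$ equals $(-1)^{s-\delta(s)}$ and so determines the parity of $\delta(s)$. From the proof of Theorem \ref{thm1}, $\widehat{HFK}(K)$ is supported in at most two consecutive delta gradings: a single one in Cases \hyperref[c2]{II} and \hyperref[c3]{III}, and within Case \hyperref[c1]{I} also when $a>c$; two consecutive gradings $\delta_1<\delta_2=\delta_1+1$ occur only when $a\le b$ and $a\le c$. Since consecutive delta gradings have opposite parity, knowing the parity of $\delta(s)$ pins down $\delta(s)$ itself inside this two-element window. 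Hence both the rank and the (relative) delta, equivalently Maslov, grading of $\widehat{HFK}(K,s)$ are functions of $\Delta_K$ alone.

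There is no genuine obstacle here beyond bookkeeping. The one subtlety is the difference between ``the ranks are determined'' and ``the full bigraded group is determined,'' which the parity argument of the second step resolves, together with the caveat already noted in the Remark on gradings that the delta grading is intrinsically relative, so the statement is to be understood up to a global grading shift that is fixed by independent means such as the Kauffman-state description.
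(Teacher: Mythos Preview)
Your proposal is correct and follows essentially the same approach as the paper: the corollary is stated without proof, being an immediate consequence of Theorem~\ref{thm1} together with the observation in Section~2 that a single delta grading per Alexander grading means the coefficient of $t^s$ in $\Delta_K$ recovers the rank. Your second step, pinning down the delta grading via the sign/parity argument, goes somewhat beyond what the paper makes explicit but is both correct and a natural sharpening of the statement.
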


\section{The case $K=P(2a,-2b-1,2c+1)$}
\begin{figure}
\centering
\includegraphics[width=.75\textwidth]{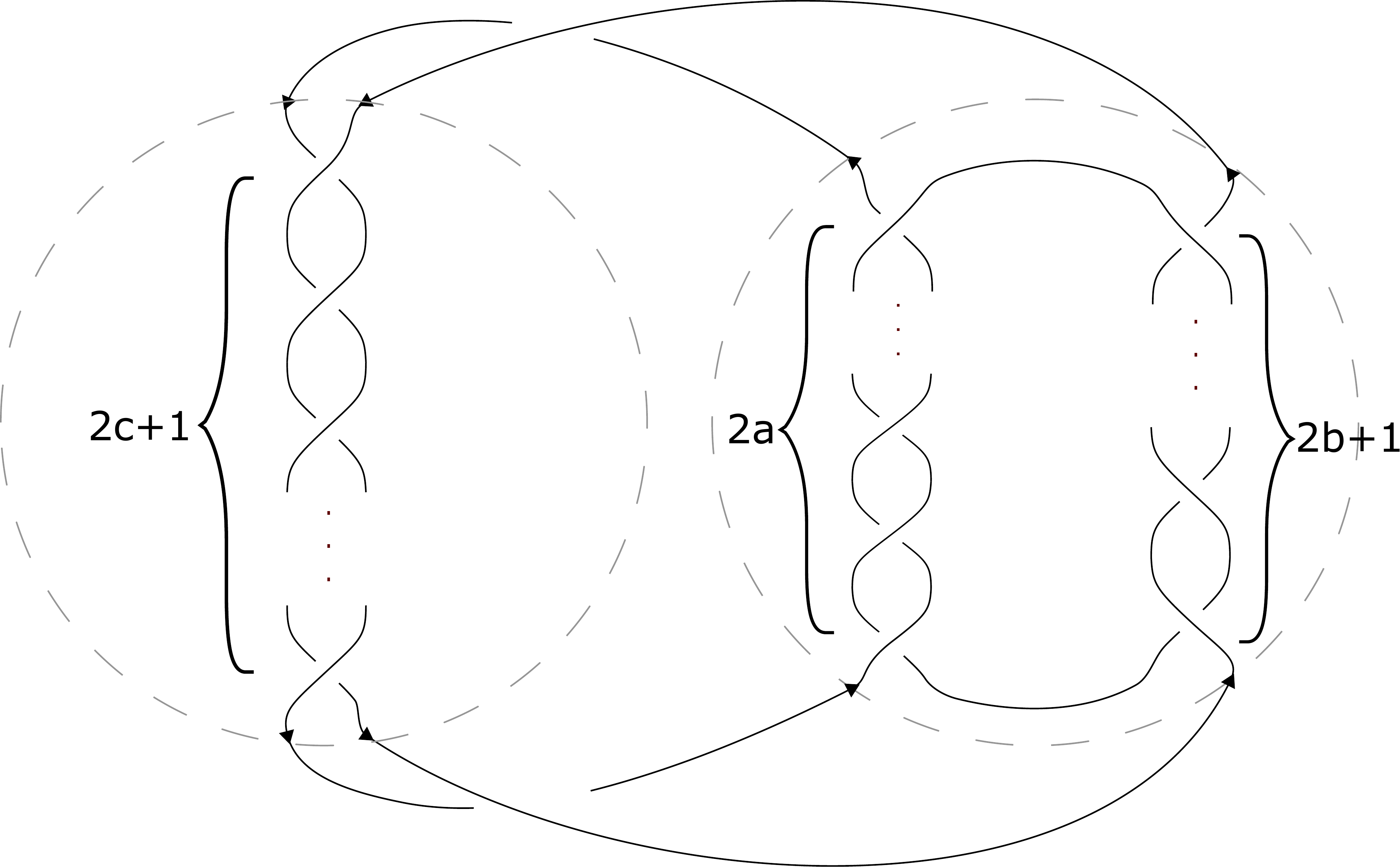}
\caption{The knot $P(2a,-2b-1,2c+1)$ realized as the pairing of two tangles}
\label{fig:pair_P}
\end{figure}
We now turn our attention to the case of the pretzel knot ${P(2a,-2b-1,-2c-1)}$. This corresponds to pairing the $\frac{1}{2c+1}$ rational tangle with the ${(2a,-2b-1)}$-pretzel tangle (see Figure \ref{fig:pair_P}); as in the last section, we need to compute the pairing of curves of the form:
\begin{itemize}
\item $i_k(1,4)t^mt^M$
\item $i_k(2,3)t^mt^M$
\item $r\left(-\frac{1}{2n}\right)t^{-2n}t^{2n}$
\item $r\left(\frac{1}{2n}\right)t^{m}t^{M}$
\item $r\left(-\frac{A}{B}\right)t^{-M}t^{M}$
\end{itemize}
except this time with the rational curve $r\left(-\frac{1}{2c+1}\right)t^{-2c-1}t^{2c+1}$.
\begin{figure}[h]
\includegraphics[width=\textwidth]{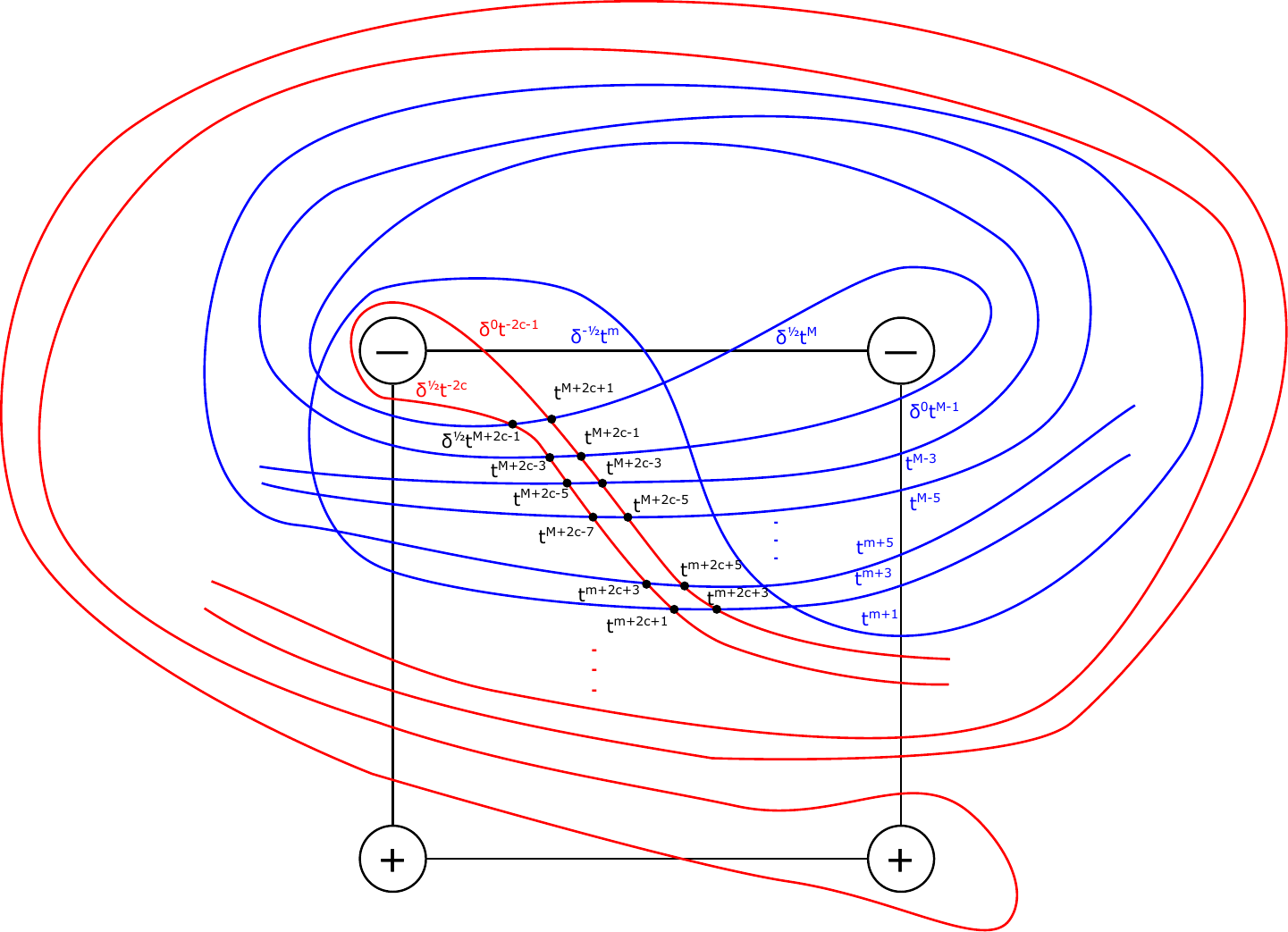}
\caption{The minimal intersection between the curves $i_k(1,4)t^mt^M$ and $r\left(-\frac{1}{2c+1}\right)t^{-2c-1}t^{2c+1}$}
\label{fig:int_i14n}
\end{figure}
Pairing with the curve $i_k(1,4)t^mt^M$ is similar to the analogous computation from the previous section.  As in Figure \ref{fig:int_i14n}, one can arrange that all intersection points occur as the red curve crosses over the blue arcs and avoids the blue curve in all its ``rotations".  By \eqref{eq:int_D} we see that all the intersection points have delta-grading $\frac{1}{2}$.  On the other hand, by \eqref{fig:int_A}, we see that the Alexander gradings are arranged in consecutive pairs:
\begin{align*}
(m+2c+1,m+2c+3), (m+2c+3,m+2c+5), (m+2c+5,m+2c+7), \dots \\
\dots, (M+2c-5,M+2c-3), (M+2c-3,M+2c-1), (M+2c-1,M+2c+1)
\end{align*}
Applying the reduction gives:
\begin{equation}\label{eq:I14_2}
\begin{aligned}
r\left(-\frac{1}{2c+1}\right)t^{-(2c+1)}t^{2c+1} \boxtimes i_k(1,4)t^mt^M \\
=\delta^{1/2}\{t^{m/2+c+1},t^{m/2+c+2},\dots,t^{M/2+c}\}
\end{aligned}
\end{equation}
\begin{figure}[!h]
\includegraphics[width=\textwidth]{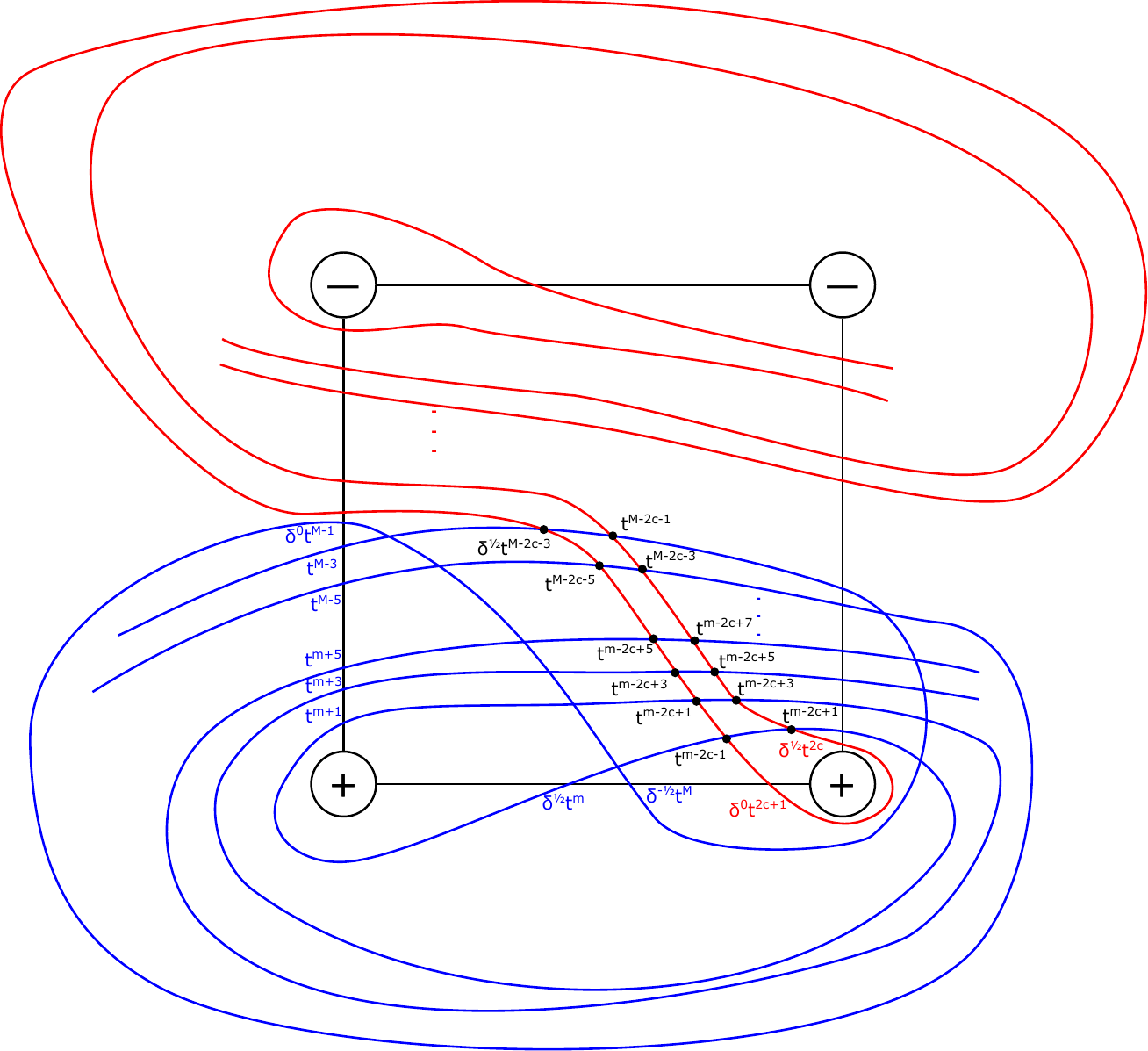}
\caption{The minimal intersection between the curves $i_k(2,3)t^mt^M$ and $r\left(-\frac{1}{2c+1}\right)t^{-2c-1}t^{2c+1}$}
\label{fig:int_i23n}
\end{figure}
The pairing with $i_k(2,3)t^mt^M$ is similar to the previous pairing.  Once again, observing Figure \ref{fig:int_i23n} we see all generators with delta grading $\frac{1}{2}$ and the Alexander gradings arranged in consecutive pairs, each pair shifted by 2 from the previous.  In particular, the Alexander gradings are:
\begin{align*}
(m-2c-1,m-2c+1),(m-2c+1,m-2c+3),(m-2c+3,m-2c+5),\dots \\
\dots, (M-2c-7,M-2c-5),(M-2c-5,M-2c-3),(M-2c-3,M-2c-1)
\end{align*}
Applying the reduction gives:
\begin{equation}\label{eq:I23_2}
\begin{aligned}
r\left(-\frac{1}{2c+1}\right)t^{-(2c+1)}t^{2c+1} \boxtimes i_k(2,3)t^mt^M \\
=\delta^{1/2}\{t^{m/2-c},t^{m/2-c+1},\dots,t^{M/2-c-1}\}
\end{aligned}
\end{equation}

\begin{figure}[h]
\centering
\begin{subfigure}{\textwidth}
\centering
\includegraphics[width=\textwidth]{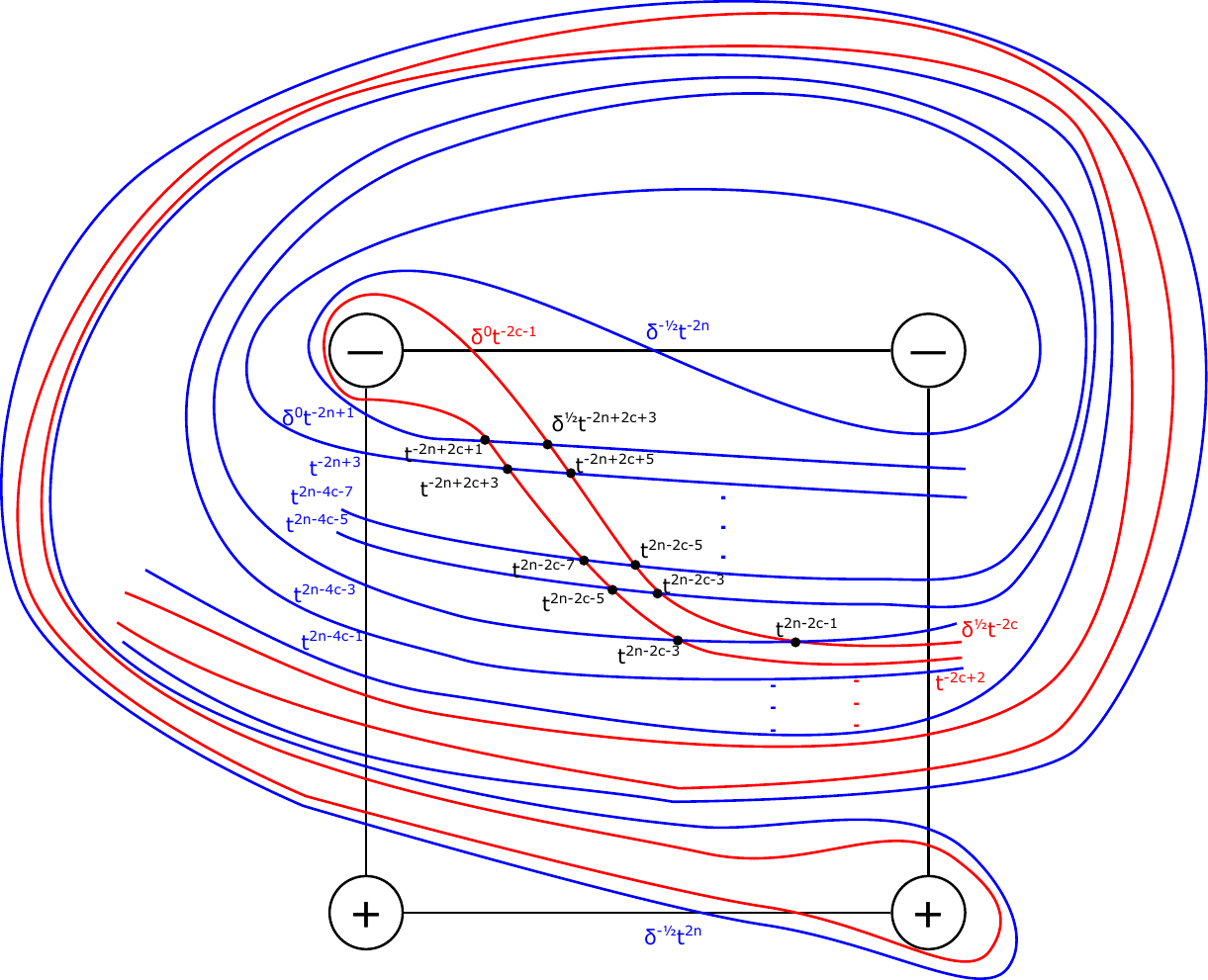}
\caption{The case where $n>c$}
\label{fig:int_rnn1}
\end{subfigure}
\caption{The minimal intersection between the curves $r\left(-\frac{1}{2n}\right)t^{-2n}t^{2n}$ and $r\left(-\frac{1}{2c+1}\right)t^{-2c-1}t^{2c+1}$; the two cases}
\label{fig:int_rnn}
\end{figure}
\begin{figure}\ContinuedFloat
\begin{subfigure}{\textwidth}
\includegraphics[width=\textwidth]{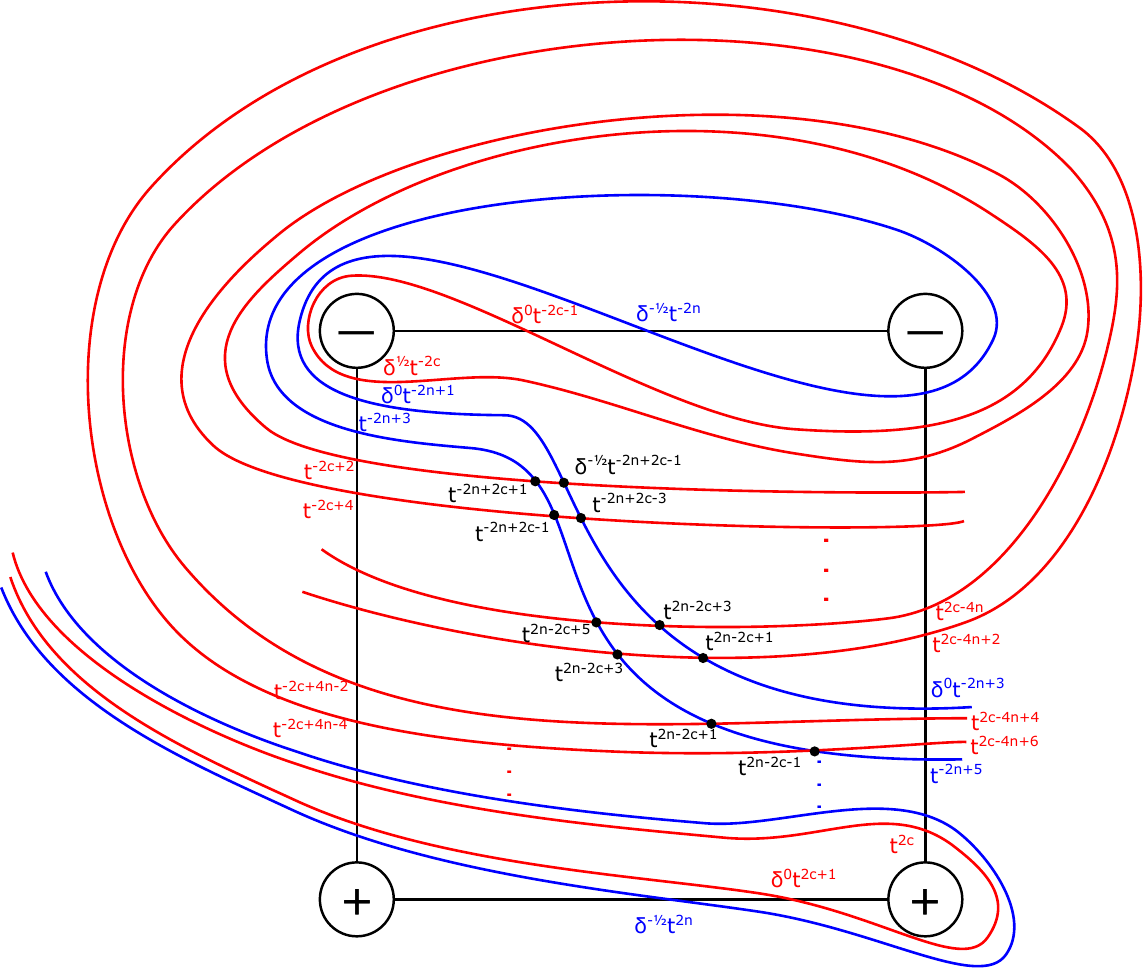}
\caption{The case where $n\leq c$}
\label{fig:int_rnn2}
\end{subfigure}
\end{figure}
We now consider the pairing with the rational curve $r\left(-\frac{1}{2n}\right)t^{-2n}t^{2n}$.  This is essentially the mirror image of the pairing $HF\left(r\left(\frac{1}{2c+1}\right)t^{-(2c+1)}t^{2c+1}, r\left(\frac{1}{2n}\right)t^{m}t^{M}\right)$ from the previous section, with $M=2n$ (see Figure \ref{fig:int_rnn} vis-\`{a}-vis Figure \ref{fig:int_rpp}).  As in that case, there are again two subcases depending on whether $n>c$ or $n\leq c$.  In the first case, by Figure \ref{fig:int_rnn1}, we see that the intersection points have delta-grading $\frac{1}{2}$ (as usual, we only need to compute this for one such point, in light of Lemma \ref{lem:rat_delta}) and Alexander gradings occurring in consecutive pairs:
\begin{align*}
(-2n+2c+1,-2n+2c+3), (-2n+2c+3,-2n+2c+5), (-2n+2c+5,-2n+2c+7), \dots \\
\dots, (2n-2c-7,2n-2c-5), (2n-2c-5,2n-2c-3), (2n-2c-3,2n-2c-1)
\end{align*}
In the second case, by Figure \ref{fig:int_rnn2}, we see that the intersection points have delta-grading $-\frac{1}{2}$, with Alexander gradings occurring in consecutive pairs:
\begin{align*}
(2n-2c-1,2n-2c+1), (2n-2c+1,2n-2c+3), (2n-2c+3,2n-2c+5), \dots \\
\dots, (-2n+2c-3,-2n+2c-5), (-2n+2c-1,-2n+2c-3), (-2n+2c+1,-2n+2c-1)
\end{align*}
After applying the reduction, we obtain:
\begin{equation}\label{eq:RN_2}
\begin{aligned}
r\left(-\frac{1}{2c+1}\right)t^{-(2c+1)}t^{2c+1} \boxtimes r\left(-\frac{1}{2n}\right)t^{-2n}t^{2n} \\
=\begin{cases}
\delta^{1/2}\{t^{-n+c+1},t^{-n+c+2},\dots,t^{n-c-1}\} & \textrm{ if }n>c \\
\delta^{-1/2}\{t^{n-c},t^{n-c+1},\dots,t^{c-n}\} & \textrm{ if }n\leq c
\end{cases}
\end{aligned}
\end{equation}
Once again, one can also see this alternatively as the Knot Floer homology of the $(2,2c-2n+1)$-torus knot.

\begin{figure}[h]
\includegraphics[width=\textwidth]{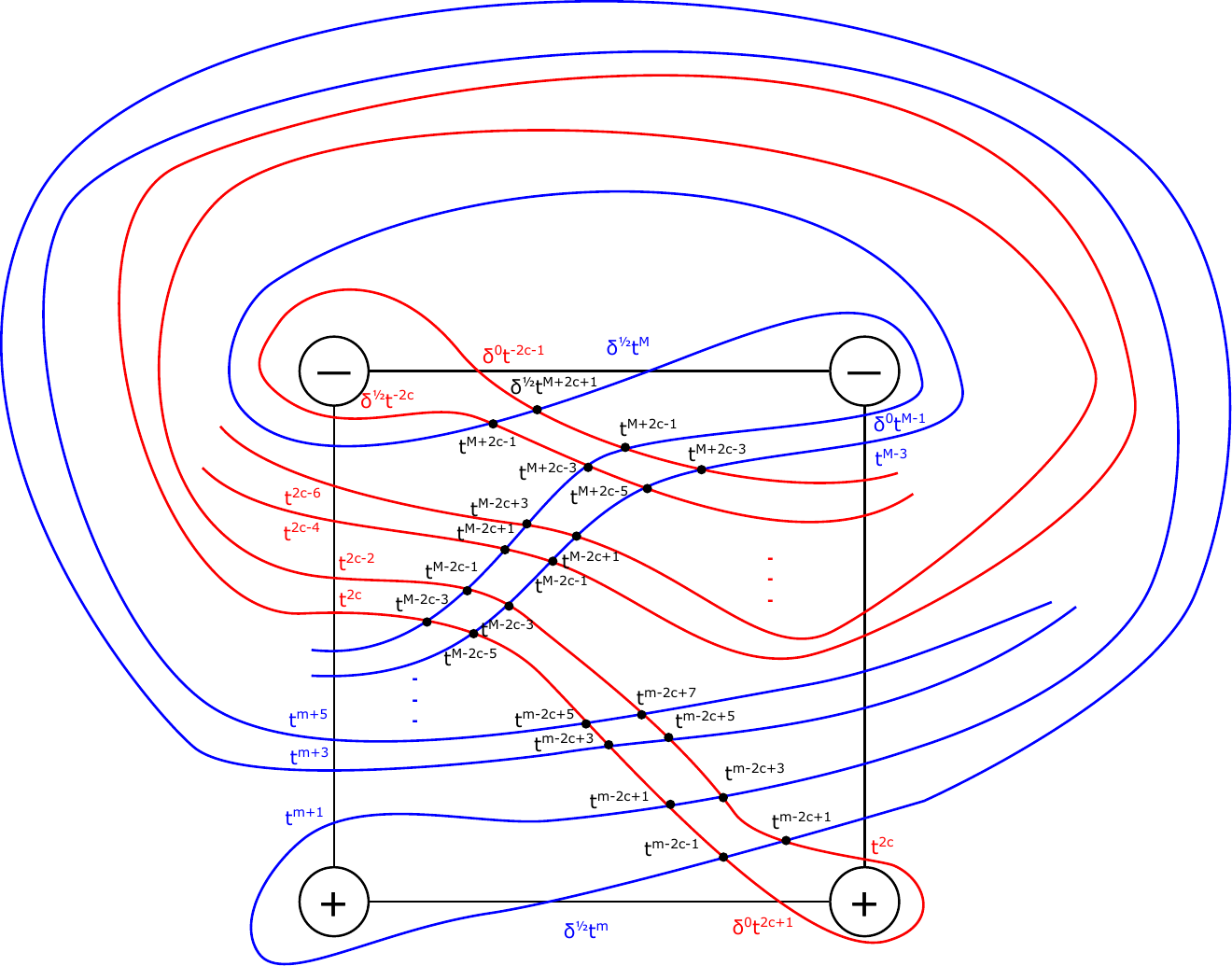}
\caption{The minimal intersection between the curves $r\left(\frac{1}{2n}\right)t^{m}t^{M}$ and $r\left(-\frac{1}{2c+1}\right)t^{-2c-1}t^{2c+1}$}
\label{fig:int_rpn}
\end{figure}
We now consider the pairing with the rational curve $r\left(-\frac{1}{2n}\right)t^{-2n}t^{2n}$.  This is essentally the mirror image of the pairing $HF\left(r\left(\frac{1}{2c+1}\right)t^{-(2c+1)}t^{2c+1}, r\left(-\frac{1}{2n}\right)t^{-2n}t^{2n}\right)$ from the previous section, except this time the blue curve has a more general Alexander grading (see Figure \ref{fig:int_rpn} and compare with Figure \ref{fig:int_rnp}).  From Figure \ref{fig:int_rpn} we see that the intersection points have delta-grading $\frac{1}{2}$ (as usual, we only need to compute this for one point by Lemma \ref{lem:rat_delta}).  Moreover, the Alexander gradings are arranged in consecutive pairs:
\begin{align*}
(m-2c-1,m-2c+1),(m-2c+1,m-2c+3),(m-2c+3,m-2c+5), \dots \\
\dots,(M+2c-5,M+2c-3),(M+2c-3,M+2c-1),(M+2c-1,M+2c+1)
\end{align*}
Applying the reduction, we obtain:
\begin{equation}\label{eq:RP_2}
\begin{aligned}
r\left(-\frac{1}{2c+1}\right)t^{-(2c+1)}t^{2c+1} \boxtimes r\left(\frac{1}{2n}\right)t^{m}t^{M} \\
\delta^{1/2}\{t^{m/2-c},t^{m/2-c+1},\dots,t^{M/2+c}\}
\end{aligned}
\end{equation}
As before, this can also be seen to be (up to an overall grading shift) the Knot Floer homology of the $(2,2n+2c+1)$-torus knot.

\begin{figure}[!h]
\centering
\begin{subfigure}{\textwidth}
\centering
\includegraphics[width=\textwidth]{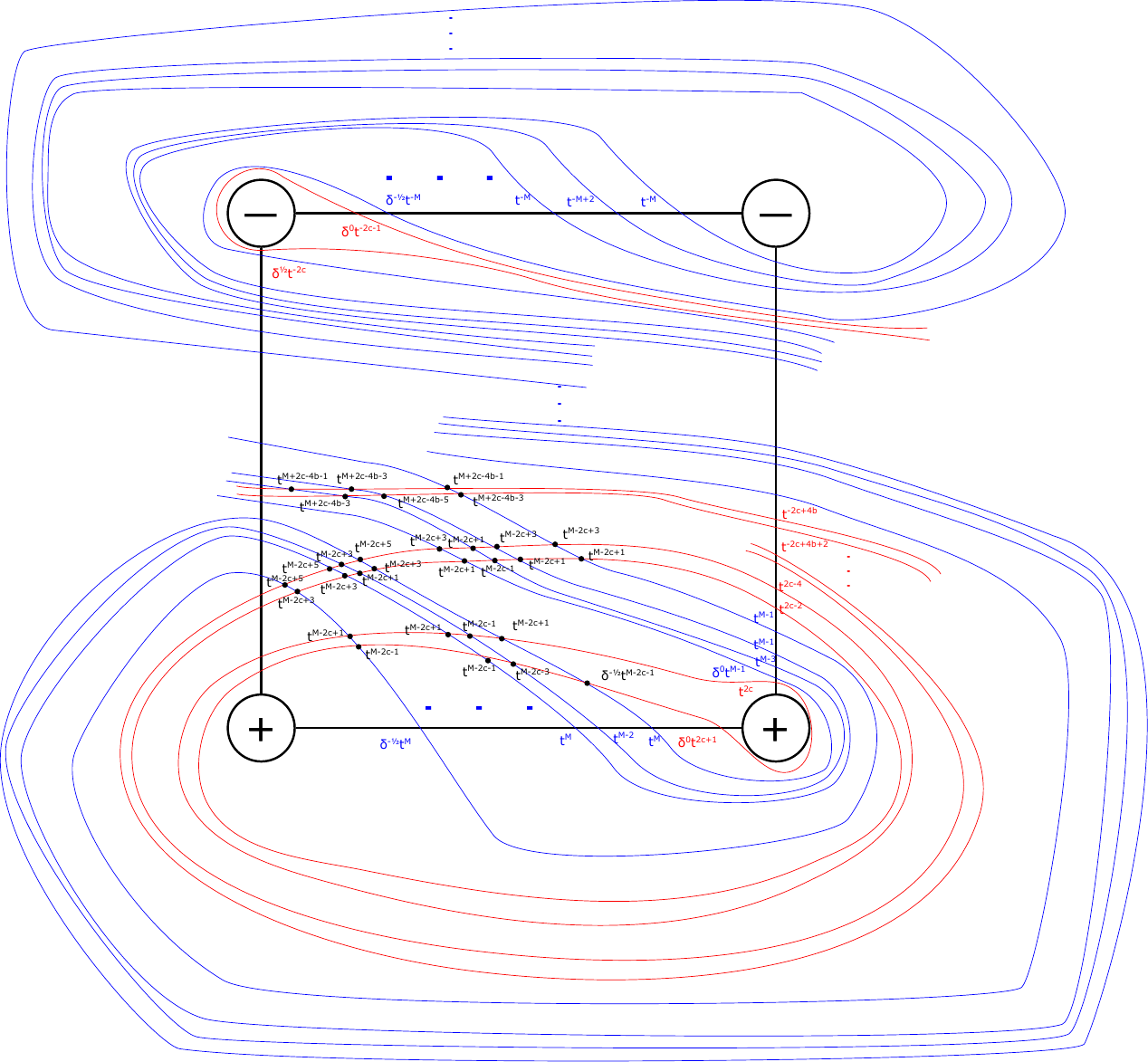}
\caption{The case where $\frac{A}{B}>\frac{1}{2c+1}$}
\label{fig:int_rgn1}
\end{subfigure}
\caption{The minimal intersection between the curves $r\left(-\frac{A}{B}\right)t^{-M}t^{M}$ and $r\left(-\frac{1}{2c+1}\right)t^{-2c-1}t^{2c+1}$; the two cases}
\label{fig:int_rgn}
\end{figure}
\begin{figure}\ContinuedFloat
\begin{subfigure}{\textwidth}
\includegraphics[width=\textwidth]{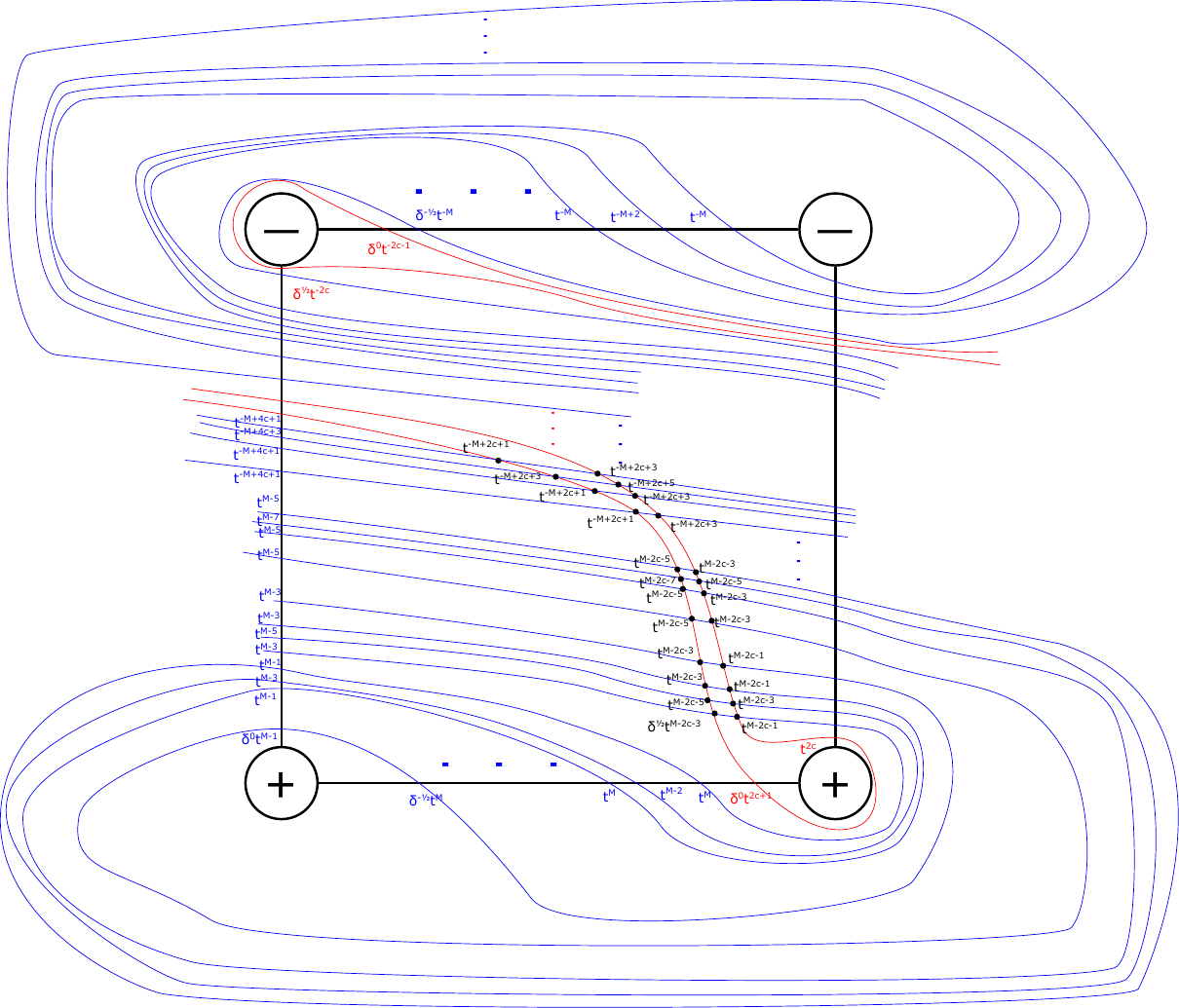}
\caption{The case where $\frac{A}{B}<\frac{1}{2c+1}$}
\label{fig:int_rgn2}
\end{subfigure}
\end{figure}
We now turn our attention to the pairing with the more general rational curve $r\left(-\frac{A}{B}\right)t^{-M}t^M$, where, by Case \hyperref[c3]{III}, we have $A=2(a-b)-1$, $B=4b(a-b-1)+2a=A(2b+1)+1$, and $M=2b+2$.  As this is a pairing between rational curves with slopes of the same sign, we consider two separate cases: $\frac{A}{B}>\frac{1}{2c+1}$ and $\frac{A}{B}<\frac{1}{2c+1}$.  In the first case, as we see in Figure \ref{fig:int_rgn1}, we can arrange the curves so that the red curve, starting from the top-left corner, follows the blue curve for $b$ ``rotations".  Since $B=A(b+1)+1$, after $b$ rotations, the parallel red arcs intersect the left edge of the square at a point where there are exactly $A+1$ blue intersections below it.  Then, we can let the red arcs cross over $A-1$ blue arcs as in the figure, before continuing the rest of its ``rotations" in such a way that the intersections only occur as the red arcs cross the lower $2A$ blue arcs until crossing the final $A$ arcs before reaching the bottom-right puncture.

From this configuration, we see that all intersection points have delta-grading $-\frac{1}{2}$ (as usual, this only needs to be checked once) and the Alexander gradings are arranged in groups of $A$ pairs of the form:
\begin{align*}
(\alpha-1,\alpha+1),(\alpha-1,\alpha-3),(\alpha-1,\alpha+1),(\alpha-1,\alpha-3),(\alpha-1,\alpha+1)...\\
...(\alpha-1,\alpha+1),(\alpha-1,\alpha-3),(\alpha-1,\alpha+1)
\end{align*}
for $\alpha = M-2c, M-2c+2, M-2c+4,\dots, M+2c-4b-2$.  Since $M=2b+2$, this last value may be written as: $-M+2c+2$ Notice that for this largest value of $\alpha$, there are $A-1$ pairs as the last pair is omitted.  This is consistent with Lemma \ref{lem:rat_det} which predicts $L=\det\left(\begin{matrix}
A & -1\\
-B & 2c+1
\end{matrix}\right) = A(2c+1)-B=2A(c-b)-1$.
Notice also that, by turning Figure \ref{fig:int_rgn1} upside-down, one sees that the multiset of Alexander gradings is invariant under multiplication by $-1$.
In the second case ($\frac{A}{B}<\frac{1}{2c+1}$), we see that, as in Figure \ref{fig:int_rgn2}, the curves may be arranged such that the red arcs, starting at the top-left puncture follow the blue curve for all $c$ of their ``rotations" before crossing over the rest of the blue arcs to reach the bottom-right puncture.  We see that in this case, the intersection points have delta-grading $\frac{1}{2}$.  Similarly as before, the Alexander gradings are arranged in groups of $A$ pairs of the form
\begin{align*}
(\alpha-1,\alpha+1),(\alpha+1,\alpha+3),(\alpha-1,\alpha+1),(\alpha+1,\alpha+3),(\alpha-1,\alpha+1)...\\
...(\alpha-1,\alpha+1),(\alpha+1,\alpha+3),(\alpha-1,\alpha+1)
\end{align*}
for $\alpha = -M+2c+2, -M+2c+4, -M+2c+6, \dots$.  By Lemma \ref{lem:rat_det}, we know that we can take the first $L=\det\left(\begin{matrix}
-A & -1\\
B & 2c+1
\end{matrix}\right) = B-A(2c+1)$ pairs in this sequence.  Notice, moreover, that by starting at the bottom of Figure \ref{fig:int_rgn2} and moving upwards, one can instead take the sequence of groups of pairs of the form:
\begin{align*}
(\alpha-1,\alpha+1),(\alpha-1,\alpha-3),(\alpha-1,\alpha+1),(\alpha-1,\alpha-3),(\alpha-1,\alpha+1)...\\
...(\alpha-1,\alpha+1),(\alpha-1,\alpha-3),(\alpha-1,\alpha+1)
\end{align*}
with $\alpha = M-2c-2, M-2c-4, M-2c-6,\dots$.  Thus, the (multi)set of Alexander gradings is invariant under multiplication by $-1$.

Combining the two cases and applying the reduction, we obtain:
\begin{equation}\label{eq:RG_2}
\begin{aligned}
& r\left(-\frac{1}{2c+1}\right)t^{-(2c+1)}t^{2c+1} \boxtimes r\left(-\frac{A}{B}\right)t^{-M}t^{M} \\
&\frac{A}{B}>\frac{1}{2c+1} && \frac{A}{B}<\frac{1}{2c+1} \\
& =\delta^{-1/2} && =\delta^{1/2} \\
& \left.
\begin{aligned}
& \left.
\begin{aligned}
&t^{M/2-c}\\
&t^{M/2-c-1}\\
&t^{M/2-c}\\
&\vdots \\
&t^{M/2-c}
\end{aligned}\right\rbrace A \\
& \left.
\begin{aligned}
& t^{M/2-c+1}\\
& t^{M/2-c}\\
& t^{M/2-c+1}\\
& \vdots \\
& t^{M/2-c+1}
\end{aligned}\right\rbrace A \\
&\left.\begin{aligned}
& \vdots \\
& t^{-M/2+c}\\
& t^{-M/2+c+1}\\
& t^{-M/2+c}
\end{aligned}\right.
\end{aligned}
\right\rbrace L && 
\left.
\begin{aligned}
& \left.
\begin{aligned}
& t^{-M/2+c+1}\\
& t^{-M/2+c+2}\\
& t^{-M/2+c+1}\\
& \vdots \\
& t^{-M/2+c+1}
\end{aligned}\right\rbrace A \\
&\left.
\begin{aligned}
& t^{-M/2+c+2}\\
& t^{-M/2+c+3}\\
& t^{-M/2+c+2}\\
&\vdots \\
& t^{-M/2+c+2}
\end{aligned}\right\rbrace A \\
& \left. \begin{aligned}
& \vdots \\
& t^{M/2-c-1}\\
& t^{M/2-c-2}\\
& t^{M/2-c-1}
\end{aligned}\right.
\end{aligned}
\right\rbrace L
\end{aligned}
\end{equation}

We are now able to prove our main results regarding the Knot Floer homolog of the knot $P(2a,-2b-1,2c+1)$.
\begin{lem}
If $K=P(2a,-2b-1,2c+1))$ with $a,b,c>0$ and if moreover $a\leq b+1$, then for each Alexander grading $s$, $\widehat{HFK}(K,s)$ is supported in at most one delta (equivalently Maslov) grading.
\end{lem}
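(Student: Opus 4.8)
The plan is to use the hypothesis $a\leq b+1$ to restrict to Cases \hyperref[c1]{I} and \hyperref[c2]{II} in the description of $HFT(P(2a,-2b-1))$, so that the general rational curve $r(-A/B)$ of Case \hyperref[c3]{III} --- the one source of a second delta line that can overlap --- never occurs. Since Lagrangian Floer homology of a disjoint union of immersed curves is the direct sum over the components, I would compute the reduced pairing of $HFT(P(2a,-2b-1))$ with $r(-\tfrac{1}{2c+1})t^{-(2c+1)}t^{2c+1}$ component by component, using \eqref{eq:I14_2}, \eqref{eq:I23_2}, \eqref{eq:RN_2}, and \eqref{eq:RP_2}; there is no differential between components, so (after the doubling of Alexander gradings and the overall delta shift of Theorem \ref{thm:pair} and Remark \ref{rmk:half}) $\widehat{HFK}(K)$ is exactly the resulting direct sum.

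First I would dispose of Case \hyperref[c1]{I} ($a\leq b$): every component of $HFT(P(2a,-2b-1))$ is a special curve $i_k(1,4)$ or $i_k(2,3)$, or a copy of $r(\tfrac{1}{2a})$, and \eqref{eq:I14_2}, \eqref{eq:I23_2}, \eqref{eq:RP_2} each yield generators in the single delta grading $\tfrac12$ only; hence $\widehat{HFK}(K)$ is thin. In Case \hyperref[c2]{II} ($a=b+1$), the only non-special component is the single curve $r(-\tfrac{1}{2a})t^{-2a}t^{2a}$, and the special components again contribute only in delta grading $\tfrac12$. Applying \eqref{eq:RN_2} with $n=a$: if $a>c$ the rational curve also lands in delta grading $\tfrac12$, so $\widehat{HFK}(K)$ is again thin, and the lemma holds. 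This leaves precisely the subcase $a=b+1$ and $a\leq c$.

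In that subcase \eqref{eq:RN_2} contributes a block in delta grading $-\tfrac12$ supported in Alexander gradings $a-c,a-c+1,\dots,c-a$, while all other contributions lie in delta grading $\tfrac12$; so it is enough to check that no Alexander grading in $[a-c,c-a]$ also carries a delta-$\tfrac12$ generator. Reading off the explicit list of Case \hyperref[c2]{II}, the $i_k(1,4)$-components all have smallest Alexander grading $m\geq -2a$, so by \eqref{eq:I14_2} every delta-$\tfrac12$ Alexander grading coming from an $i_k(1,4)$ is at least $-a+c+1=(c-a)+1$; symmetrically the $i_k(2,3)$-components all have largest Alexander grading $M\leq 2a$, so by \eqref{eq:I23_2} every delta-$\tfrac12$ Alexander grading coming from an $i_k(2,3)$ is at most $a-c-1=(a-c)-1$. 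Hence every delta-$\tfrac12$ Alexander grading lies strictly outside $[a-c,c-a]$, so the delta-$\tfrac12$ and delta-$(-\tfrac12)$ parts of $\widehat{HFK}(K)$ have disjoint Alexander support, and (since an overall delta shift cannot merge distinct relative delta gradings) each $\widehat{HFK}(K,s)$ lies in a single delta grading.

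The only delicate point is the bookkeeping in the last paragraph: one must correctly read off which values of $m$ and $M$ occur among the special curves of Case \hyperref[c2]{II} and confirm that the two delta-$\tfrac12$ blocks end up immediately adjacent to, but not overlapping, the central delta-$(-\tfrac12)$ block. Everything else is a direct appeal to the pairing computations already carried out.
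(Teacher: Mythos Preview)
Your proposal is correct and follows essentially the same approach as the paper: restrict to Cases \hyperref[c1]{I} and \hyperref[c2]{II}, observe thinness in Case \hyperref[c1]{I} via \eqref{eq:I14_2}, \eqref{eq:I23_2}, \eqref{eq:RP_2}, and in Case \hyperref[c2]{II} use \eqref{eq:RN_2} to isolate the $\delta=-\tfrac12$ block in $[a-c,c-a]$ and check via \eqref{eq:I14_2}, \eqref{eq:I23_2} that the $\delta=\tfrac12$ generators lie outside this interval. Your version is slightly more explicit in reading off the extremal $m,M$ from the Case \hyperref[c2]{II} list, but the argument is the same.
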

\begin{proof}
By hypothesis, as regards the pretzel tangle, we are in Cases \hyperref[c1]{I} and \hyperref[c2]{II}.  In the first case, by \eqref{eq:I14_2}, \eqref{eq:I23_2}, and \eqref{eq:RP_2}, we have that all generators are in a single delta grading, and so our claim follows.  In Case \hyperref[c2]{II}, once again we see that if $a>c$ all generators are in the same delta grading, so we may assume $a\leq c$.  Now by \eqref{eq:RN_2}, the generators with delta grading $-\frac{1}{2}$ have Alexander gradings supported in the interval $[a-c,c-a]$.  By \eqref{eq:I14_2} and \eqref{eq:I23_2}, we see that the generators with delta grading $\frac{1}{2}$ have Alexander grading at least $c-a+1$ or at most $a-c-1$.  Hence, there is no overlap between the two types of delta grading.
\end{proof}

In light of this, we turn our attention to Case \hyperref[c3]{III}, that is the case where $a>b+1$.  According to \eqref{eq:RG_2}, $\widehat{HFK}$ will be supported in a single delta grading unless $\frac{A}{B}>\frac{1}{2c+1}$.  Recall that $A=2(a-b)-1$ and $B=4b(a-b-1)+2a = A(2b+1)+1$.  We have:
\begin{align*}
\frac{A}{B} &>\frac{1}{2c+1}\\
\Leftrightarrow \frac{A}{A(2b+1)+1} &> \frac{1}{2c+1}\\
\Leftrightarrow A(2c+1) &> A(2b+1)+1\\
\Leftrightarrow 2A(c-b) &> 1\\
\Leftrightarrow c &> b
\end{align*}
Hence, the interesting case occurs when $b<\min(c,a-1)$ and, by \eqref{eq:RG_2}, the generators with delta grading $-\frac{1}{2}$ all have Alexander gradings within the range $[b-c,c-b]$.  On the other hand, by \eqref{eq:I14_2} and \eqref{eq:I23_2}, the generators with delta grading $\frac{1}{2}$ either have Alexander grading at least $c-b$ or at most $b-c$.  So there is overlap precisely at these Alexander gradings.  We may also compute the respective ranks: in the case of $\delta = \frac{1}{2}$, we see that only curves of the form $i_k(1,4)t^{2b-2}t^M$ contribute any (in fact exactly one) generator of Alexander grading $c-b$, and there are $b$ such curves.  Similarly for grading $b-c$.  In the case of $\delta = -\frac{1}{2}$, we see that by \eqref{eq:RG_2} there are $\frac{A-1}{2} = a-b-1$ generators of Alexander grading $b-c$ and symmetrically for $c-b$.  These results are summarized in Table \ref{tab:rk}.
\begin{table}[h]
\centering
\begin{tabular}{|c|c|c|}
\hline
& $s=b-c$ & $s=c-b$
\\
\hline
$\delta= 1/2$ & $b$ & $b$\\
\hline
$\delta= -1/2$ & $a-b-1$ & $a-b-1$\\
\hline
\end{tabular}
\caption{The ranks of $\widehat{HFK}_{\delta}(K,s)$} in the ``overlap" region in the case $b<\min(a-1,c)$
\label{tab:rk}
\end{table}

Thus we have shown
\begin{thm}\label{thm2}
If $K=P(2a,-2b-1,2c+1)$ with $a,b,c>0$ and ${b\geq \min(a-1,c)}$ then for each Alexander grading $s$, $\widehat{HFK}(K,s)$ is supported in at most one delta (equivalently Maslov) grading.  Otherwise, this is true for each Alexander grading except for $s=\pm(c-b)$ in which case there are two delta gradings for which $\widehat{HFK}(K,s)$ is nonzero, and the ranks are as in Table \ref{tab:rk}.
\end{thm}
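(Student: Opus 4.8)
The plan is to prove Theorem~\ref{thm2} by breaking into the three cases for the pretzel tangle invariant $HFT(P(2a,-2b-1))$ — namely $a\le b$, $a=b+1$, and $a>b+1$ — and assembling $\widehat{HFK}(K)$ in each case from the box-tensor computations \eqref{eq:I14_2}, \eqref{eq:I23_2}, \eqref{eq:RN_2}, \eqref{eq:RP_2}, and \eqref{eq:RG_2} via Theorem~\ref{thm:pair}. The cases $a\le b+1$ (Cases~\hyperref[c1]{I} and \hyperref[c2]{II}) are exactly the content of the preceding Lemma, so the only real work is Case~\hyperref[c3]{III}. First I would translate the hypothesis on $b$ into a statement about this case: since $a>b+1$ forces $a-1>b$, one has $b\ge\min(a-1,c)$ if and only if $c\le b$, while $b<\min(a-1,c)$ if and only if $b<c$; thus the two clauses of the theorem correspond, within Case~\hyperref[c3]{III}, to $c\le b$ and to $b<c$ respectively.

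Next, I would recall that in Case~\hyperref[c3]{III} the invariant $HFT(P(2a,-2b-1))$ is built from the special curves $i_k(1,4)$ and $i_k(2,3)$ together with a single rational curve $r\!\left(-\tfrac{A}{B}\right)$ with $A=2(a-b)-1$, $B=A(2b+1)+1$, and $M=2b+2$. Pairing each with $r\!\left(-\tfrac{1}{2c+1}\right)$: equations \eqref{eq:I14_2} and \eqref{eq:I23_2} put every generator coming from a special curve in delta grading $\tfrac12$, while \eqref{eq:RG_2} puts the generators coming from $r\!\left(-\tfrac{A}{B}\right)$ in delta grading $\tfrac12$ when $\tfrac{A}{B}<\tfrac{1}{2c+1}$ and in delta grading $-\tfrac12$ when $\tfrac{A}{B}>\tfrac{1}{2c+1}$. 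The elementary manipulation $\tfrac{A}{B}>\tfrac{1}{2c+1}\iff A(2c+1)>A(2b+1)+1\iff 2A(c-b)>1\iff c>b$ (carried out just above the theorem statement) then shows that when $c\le b$ all of $\widehat{HFK}(K)$ lies in a single delta grading, which together with the preceding Lemma establishes the first assertion of the theorem.

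It remains to treat $b<c$. Here I would pin down the Alexander-grading windows of the two delta gradings that occur. Substituting $M=2b+2$ into \eqref{eq:RG_2} shows the $\delta=-\tfrac12$ generators have Alexander grading in $[b-c,\,c-b]$. On the other hand, in Case~\hyperref[c3]{III} every $i_k(1,4)$ curve has lower grading $m\in\{-2b-2,-2b\}$, so by \eqref{eq:I14_2} its contribution has smallest Alexander grading $\tfrac m2+c+1\ge c-b$; and every $i_k(2,3)$ curve has upper grading $M\le 2b+2$, so by \eqref{eq:I23_2} its contribution has largest Alexander grading $\tfrac M2-c-1\le b-c$. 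Hence the only Alexander gradings at which both delta gradings are nonzero are $s=\pm(c-b)$, which is the second assertion. For the ranks: among the $\delta=\tfrac12$ generators at $s=c-b$, only the $b$ curves $i_k(1,4)t^{-2b-2}t^{-2b+4k-2}$ ($k=1,\dots,b$) contribute, and each contributes precisely its extremal generator, so the rank is $b$; by the symmetry $t^{\alpha}\leftrightarrow t^{-\alpha}$ (the ``upside-down diagram'' observation) the rank at $s=b-c$ is also $b$. Among the $\delta=-\tfrac12$ generators at $s=b-c$, \eqref{eq:RG_2} — in which the last pair of the extremal group of $A$ pairs is omitted, as forced by Lemma~\ref{lem:rat_det} — contributes $\tfrac{A-1}{2}=a-b-1$ generators, and symmetrically at $s=c-b$. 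These four numbers are exactly Table~\ref{tab:rk}.

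I expect the main obstacle to be this last rank count: one has to argue carefully that no other curve from the Case~\hyperref[c3]{III} list produces a generator at the extremal Alexander gradings $\pm(c-b)$ (in particular that the $i_k(1,4)t^{-2b}t^{M}$ curves and all $i_k(2,3)$ curves overshoot on the relevant side), and one must use the ``omitted last pair'' feature of \eqref{eq:RG_2} to land on $a-b-1$ rather than $a-b$ on the $\delta=-\tfrac12$ row. The two symmetric halves of each count collapse to one thanks to the reflection symmetry of \eqref{eq:RG_2} and of the special-curve pairings, and the delta-grading assertions themselves cost essentially nothing given Lemma~\ref{lem:rat_delta} together with \eqref{eq:int_D}.
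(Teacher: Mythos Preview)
Your proposal is correct and follows essentially the same route as the paper: the preceding Lemma disposes of Cases~\hyperref[c1]{I} and \hyperref[c2]{II}, the equivalence $\tfrac{A}{B}>\tfrac{1}{2c+1}\iff c>b$ isolates the non-thin subcase of Case~\hyperref[c3]{III}, the window comparison from \eqref{eq:I14_2}, \eqref{eq:I23_2}, and \eqref{eq:RG_2} pins the overlap down to $s=\pm(c-b)$, and the rank count reproduces Table~\ref{tab:rk}. Your write-up is in fact slightly more explicit than the paper's in two places (you spell out why $b\ge\min(a-1,c)$ reduces to $c\le b$ inside Case~\hyperref[c3]{III}, and you check the $i_k(2,3)$ side of the window), and you correctly identify the curves with $m=-2b-2$ as the only special-curve contributors at $s=c-b$---the paper's ``$t^{2b-2}$'' there is a typo for $t^{-2b-2}$.
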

It follows that, in the case where ${b\geq \min(a-1,c)}$, the Knot Floer homology of this family of knots is determined by the Alexander polynomial, as predicted by Theorem 2 of \cite{Eft}.  This does not hold whenever $b<\min(a-1,c)$, and our work corrects Eftekhary's result in that case.

\section*{Acknowledgments}
The author would like to thank Zolt\'{a}n Szab\'{o} for encouraging work on this project. The author also thanks Andy Manion and Jonathan Hanselman for helpful discussions as well as Claudius Zibrowius for comments on a draft of this paper.  This work was supported by NSF grants DMS-1502424 and DMS-1606571.

\printbibliography

\end{document}